\newtheorem{theorem}{Theorem}
\newtheorem{lemma}{Lemma}[section]
\newtheorem{corollary}{Corollary}[section]
\newtheorem{remark}{Remark}[section]
\renewcommand{\epsilon}{\varepsilon}
\newcommand{\talpha}{{\tilde \alpha}}
\newcommand{\Ws}{\mathcal{W}_{\mathrm{s}}}
\newcommand{\Wl}{\mathcal{W}_{\mathrm{l}}}
\newcommand{\efd}{EFD\xspace}
\newcommand{\s}{\mathfrak{s}}
\newcommand{\Su}{\mathfrak{A}}
\newcommand{\tG}{\mathcal{G}}
\newcommand{\logn}[1]{\log^{(n)}\left (#1 \right )}
\newcommand{\lo}[2]{\log^{(#1)}\left (#2 \right )}
\newcommand{\en}[2]{\exp^{(#1)}\left (#2 \right )}
\newcommand{\Gi}{g_{\mathrm{I}}}
\newcommand{\uii}{U}
\newcommand{\Gii}{g_{\mathrm{II}}}
\newcommand{\fX}{\mathcal{Y}}
\newcommand{\wX}{\widetilde X}
\newcommand{\Xs}{X^\mathrm{S}}
\newcommand{\Xd}{X^\mathrm{D}}
\newcommand{\wS}{S^\mathrm{D}}
\newcommand{\Ss}{S^\mathrm{S}}
\newcommand{\ds}{\sigma^\mathrm{D}}
\newcommand{\mkd}{N_k^\mathrm{D}}
\renewcommand{\P}{\mathbb{P}}
\newcommand{\X}{\mathcal{X}}
\newcommand{\fm}{FMM\xspace}
\newcommand{\dfm}{DFMM\xspace}
\newcommand{\sfm}{SFMM\xspace}
\newcommand{\mm}{MMM\xspace}
\begin{document}

\title{Branching with selection and mutation II:\\ Mutant fitness of Gumbel~type}

\author{Su-Chan Park, Joachim Krug, Peter Mörters}

\date{}

\maketitle

\begin{abstract}
We study a model of a branching process subject to \emph{selection}, modeled by giving each family an individual fitness acting as a  branching rate, and \emph{mutation}, modeled by resampling the fitness of a proportion of offspring in each generation. For two large classes of fitness distributions of Gumbel type we determine the growth of the population, almost surely on survival. We then study the empirical fitness distribution in a simplified model, which is numerically indistinguishable from the original model, and show the emergence of a Gaussian travelling wave. 
\end{abstract}

\section{Introduction}

We consider the branching processes with selection and mutation introduced in~\cite{Park2023}. These are 
models of a population evolving in discrete non-overlapping generations with model parameters given by a probability distribution $\mu$ on $(0,\infty)$, which serves as a 
means to sample a random fitness of a mutant, and a mutation probability~$\beta\in(0,1)$. 
For later reference, we denote the tail function by $G(x):= \mu((x,\infty))$.
Note that $G$ is a right-continuous-left-limit function that may be discontinuous.
\medskip

A brief description of the two model variants goes as follows: In each generation a population consists of finitely many individuals each equipped with a positive fitness. Any individual lives only for one generation. Every generation produces a random number of offspring, which is  Poisson distributed with the mean given by the sum over all the fitnesses of the individuals in the generation. Now every offspring individual independently
\begin{itemize}
    \item with probability $1-\beta$ randomly \emph{selects} a parent with a probability proportional to its fitness. The offspring becomes an individual of the next generation with the fitness inherited from the parent;
\item otherwise, with probability $\beta$, it is a mutant and gets a fitness randomly sampled from $\mu$.
\begin{itemize}
    \item In the \emph{fittest mutant model (\fm)} only one mutant with largest fitness among all mutants, if it exists, joins the next generation and the others die immediately.
    \item In the \emph{multiple mutant model (\mm)} all mutants join the next generation.
\end{itemize}    
\end{itemize}
We write $X(t)$ for the number of individuals in generation 
$t$, irrespective of what initial condition is used and which model 
variant is under consideration.
Further 
discussion of the motivation behind this model can be found in the first  paper of this series~\cite{Park2023}. Other branching models including selection or mutation are \cite{AZEVEDO2021108708, 10.1214/12-AIHP504, BG} or \cite{68dcb93a-3ef5-3c08-8997-49fb84f165eb}.
Similar models have been applied for the description of the genetic structure of proliferating tumors and growing populations of pathogens \cite{Durrett2010,Cheek2020,Nicholson2023}\medskip

Our focus in this paper is on the case of unbounded fitness distributions $\mu$ with light tails at infinity, but to put this into context we briefly review known results first on bounded and second on unbounded heavy-tailed random variables.
\medskip

Suppose first that $a:=$esssup $\mu<\infty$ let
$\lambda^*:=(1-\beta)a$. In the MMM, if $\beta\int \frac{a}{a-x} \, \mu(dx)\geq 1$ there is  
a unique $\lambda\geq \lambda^*$ such that
$$1= \int \frac{\beta x}{\lambda-(1-\beta)x} \mu(dx).$$
Then, almost surely on survival, we have
$$\lim_{t\to\infty} \frac{\log X(t)}{t}=\log \lambda.$$
Otherwise,  
and always in the FMM, we have, almost surely on survival, 
$$\lim_{t\to\infty} \frac{\log X(t)}{t}=\log \lambda^*.$$
This is shown in~\cite{Dereich2017} for a continuous-time variant of the model and the proof extends to the MMM. For the FMM note that in generation $t$ there are at most {$t+1$ families with fitness $W_0, \ldots, W_{t}$ present},  each growing at rate $\log ((1-\beta) W_i)$. The overall growth rate is therefore bounded from above by $\log \lambda^*$ and also from below as $\limsup W_t=a$ almost surely on survival.
So, irrespective of the finer details of $\mu$, we see exponential growth of the population.%
\medskip%

In the case of a slowly decreasing tail at infinity, i.e.\ when
the tail function $G$
is regularly varying with index $-\alpha$, for some $\alpha>0$, we have doubly exponential growth. We show in \cite{Park2023} that, for $T$ the unique integer such that 
$$\frac{(T-1)^T}{T^{T-1}} <\alpha \leq 
\frac{T^{T+1}}{(T+1)^{T}},$$
in either MMM or FMM, almost surely on survival,
$$\lim_{t\to\infty} \frac{\log\log X(t)}{t}=\frac1T \log \frac{T}\alpha,$$
i.e.\ we have doubly exponential growth of the population.
The present paper is concerned with unbounded fitness distributions with \emph{light tail at infinity}. In analogy to the classification of distribution as extremal types we denote this class of fitness distributions as Gumbel type
\cite{deHaan06}. The classification of fitness distributions in terms of extreme value classes plays an important role in the theory of evolutionary adaptation \cite{Bataillon2014}. In this context it has been argued that the Gumbel type is the most relevant case biologically \cite{Gillespie1983,Orr2002,Joyce2008}.
\medskip

For unbounded fitness distributions of Gumbel type the population grows at a rate between exponential and doubly exponential. 
This is a wide range that cannot be easily covered by a single functional expression. Therefore we introduce parametrised subclasses of fitness distributions and show how the population grows for these subclasses in dependence of the parameters. Before stating our full results in Section~2 we describe an interesting example to give a flavour.%
\medskip%

We look at fitness distributions with  stretched exponential tail satisfying
$$
\lim_{x\to\infty}
\frac{\log(1/G(x))}{  x^\alpha L(x)}=1,$$
for a slowly varying function $L$ and $\alpha>0$. In this case, for both MMM and FMM we show in Theorem~1 that the population grows like
$$
\lim_{t\to\infty} \frac{\log X(t)}{t \log t}=\frac1\alpha,
\qquad \text{ 
almost surely on survival.}
$$
The superexponential growth is driven by the fitness $W_t$ of the fittest mutant in generation $t$ satisfying
$$
\lim_{t\to\infty}\, \frac{W_t}{t^{1/\alpha}(\log t)^{1/\alpha} (\alpha L(t^{1/\alpha}))^{-1/\alpha}}=1.
$$
In Section~3 we describe the subtle interplay of population size and fittest mutant heuristically in terms of a differential equation. Simulations demonstrated in Section~7 show that the distribution of fitness in a positive proportion of the population in generation $t$ concentrates 
around the value
$$v(t):= \alpha^{-1/\alpha} t^{1/\alpha} L(t^{1/\alpha})$$
in the shape of a Gaussian travelling wave of width $v(t)/\sqrt{\alpha t}$. In Theorem~\ref{Thm:sfm} we prove this phenomenon rigorously for a simplified model where the driving fitness $W_t$ is replaced by its deterministic asymptotics. 
\medskip

The rest of this paper is organised as follows. Full results on the growth of the population and the driving fitness are formulated as Theorem~1 and~2 in Section~2. The section also formulates, as Theorem~3, the conjectured behaviour of the travelling wave for the full model. Section~3 heuristically describes the interplay of these quantities. Section~4 contains preparation for the proofs of Theorem~1, given in Section~5, and Theorem~2, given in Section~6. Section~7 explains the approximations needed to simulate and prove the travelling wave result restated now in rigorous form as Theorem~4. We finish the paper with concluding remarks in Section~8.

\section{\label{Sec:model}Main results} 

In the \fm, $X(t)$ is generally different from the total number of
offspring of all particles in generation~$t-1$. We therefore denote by 
$\Xi(t)$ the total number of offspring of all individuals in 
generation $t-1$, including immediately dead ones, if there are any. 
By $Q_t$ we denote the largest fitness \textit{in the population} in generation
$t \ge 0$ and by $W_t$ 
the largest fitness among all mutants in generation $t \ge 1$. 
Note that $W_t \le Q_t$ and $W_t$ can be strictly smaller than $Q_t$. The number of non-mutated descendants in generation $s\ge t$ of the fittest mutant in generation $t$ will be denoted by $N_t(s)$ with the convention that $N_t(t) = 1$.  For convenience we set 
$N_t(s) = 0, W_t = 0$ if there is no mutant in generation~$t$ and $Q_t = 0$ if $X(t) = 0$. Also set $\Xi(0)=X(0)$, $W_0=Q_0$. 

\subsection{Tail functions}

To classify the decay of the tail function $G$ in a way that allows the description of the growth rates of the population size,
we denote by $\log^{(n)}$ the $n$th iterated logarithm, write $f_1(t) \sim f_2(t)$ to mean that 
the ratio of the two expressions converges to one as $t$ goes to infinity,
and assume
\begin{align}
\label{Eq:gasym}
	\lo{n_1}{1/G(x)}\sim  \big( \lo{n_2}{x}\big)^\talpha L\big( \lo{n_2}{x}\big),
\end{align}
where  $n_1, n_2$ are non-negative integers, $\alpha$ is a positive number, and
$L(x)$ is assumed to satisfy\footnote{If $L$ is a slowly varying function,
then this condition is naturally satisfied.}
\begin{align}
\lim_{x\rightarrow\infty} \frac{L(x)}{x^\epsilon}=
\lim_{x\rightarrow\infty} \frac{1}{L(x)x^\epsilon}=0,
\label{Eq:conomega}
\end{align}
for any $\epsilon>0$. 
Apart from this assumption, henceforth called 
{\bf (A1)}, we use three further technical assumptions on $L$ in \eqref{Eq:gasym}, namely
\begin{description}
\item[(A2)]
If a positive function $\ell$ satisfies \eqref{Eq:conomega}, then 
$\displaystyle
	\lim_{x\rightarrow\infty} \frac{L(x \ell(x))}{L(x)}=1.$
\item[(A3)] $L$ is four-times continuously differentiable, at least for
sufficiently large argument.
\item[(A4)]
$\displaystyle	\lim_{x\rightarrow\infty} \left ( \frac{d}{d\log x} \right )^j 
\log L(x^{\gamma})=0,$
for nonnegative integer $j$ and positive real $\gamma$.
\end{description}
Assumption~{\bf (A2)} will be used in Section~\ref{Sec:XW}. It is a stronger condition
than $L$ being a slowly varying function. Assumption~{\bf (A3)} will be used in Section~\ref{Sec:pfemp}.  Note that even if $G$ is discontinuous, we can, in most cases, find a four-times continuously differentiable $L$.
Assumption~{\bf (A4)} will be used in the proof of Lemma~\ref{Lem:Gx}
and in Section~\ref{Sec:deter}.\medskip

As an example of $L$ satisfying all four assumptions, we consider
\begin{align}
L(x) = \prod_{k=1}^m (\lo k x )^{\gamma_k}
\label{Eq:Lex}
\end{align}
with real $\gamma_k$'s. 
Obviously, \eqref{Eq:Lex} cannot exhaust all functions satisfying the above four assumptions; an example that does not take the form \eqref{Eq:Lex} is $\exp(\sqrt{\log x})$. The proofs of the main theorems apply to any function $L$ that satisfies the above four assumptions.\medskip

In this paper, we are interested in Gumbel type tail functions with unbounded support, meaning that at infinity $G$ decays faster than polynomially, i.e., for any positive $\gamma$,
\begin{align}
\label{Eq:Gfast}
\lim_{x\rightarrow\infty} x^{\gamma} G(x)= 0.
\end{align}
We now figure out\footnote{It is of course possible that $G(x)$ satisfies \eqref{Eq:Gfast} but not \eqref{Eq:gasym}.} for which parameters $n_1$, $n_2$ and $\talpha$, \eqref{Eq:Gfast} holds.
If $n_1 < n_2$, then $G$ satisfies
\begin{align}
\label{Eq:Gslow}
\lim_{x\rightarrow\infty} \frac{x^{-\epsilon}}{G(x)} = 0
\end{align}
for any positive $\epsilon$.  
As this $G$ decays slower than any Fr\'echet type 
tail function,  the long-time evolution is dominated by the
largest fitness alone as in the Fr\'echet type with $\alpha< 0.5$, as studied in~\cite{Park2023}.
If $n_1 > n_2$, then $G$ satisfies \eqref{Eq:Gfast}, which will be our concern. We define the $n$-th iterated exponential function $\exp^{(n)}$ as the inverse of 
$\log^{(n)}$ with the convention $\exp^{(0)}(x)=\lo0x=x$.
In case $n_2>0$, we have a rough bound for sufficiently large $x$ as
\begin{align*}
1/G(x) &\ge \en{n_1}{\lo{n_2}{x}^{\talpha-\epsilon}} \\
&= \en{n_1-n_2}{ \en{n_2}{\lo{n_2}{x}^{\talpha-\epsilon}}} \\
&= \en{n_1-n_2}{ \en{n_2+1}{(\talpha-\epsilon)\lo{n_2+1}{x}}} 
\ge \en{n_1-n_2}{x^{\talpha-\epsilon}},
\end{align*}
where we have used Lemma~\ref{Lem:expnd} for the last inequality,
and
\begin{align}
	1/G(x) \le \en{n_1}{\lo{n_2-1}x} =\en{n_1-n_2+1}{x}.
\end{align}
In this context, limiting ourselves to the case with $n_1>n_2=0$ would give
a guide for $n_1>n_2>0$. For example, inspecting Theorem~\ref{Th:mainthm}
suggests that almost surely on survival
$$
\lim_{t\rightarrow \infty} \frac{\lo{2}{X(t)}}{\log t} = 1
$$
for any case with $n_1>n_2\ge0$.
The remaining case is $n_1=n_2$. 
If $n_1=n_2=0$, then $G$ does not satisfy \eqref{Eq:Gfast}. In fact, this
$G$ becomes a Fr\'echet-type tail function 
already studied in~\cite{Park2023}. 
If $n_1=n_2 >0$, then how fast $G$ decays 
is determined by $\talpha$. If $0<\talpha <1$, then $G$ satisfies \eqref{Eq:Gslow}.
If $\talpha > 1$, then $G$ satisfies \eqref{Eq:Gfast}.
If $\talpha=1$, then how fast $G$ decays depends on the explicit form of
$L$.
For example, assume $L(x) = (\log x)^\gamma \bar L(\log x)$ with $\bar L$ to satisfy
\eqref{Eq:conomega}. If $\gamma > 0$, then $G$ satisfies \eqref{Eq:Gfast},
while if $\gamma < 0$, then $G$ satisfies \eqref{Eq:Gslow}.
If $\gamma=0$, then how fast $G$ decays depends on the explicit form of
$\bar L$. In this sense, it is difficult, if not impossible, to
write all possible tail functions that satisfy \eqref{Eq:Gfast}.
We take a rather special form of $L$ for $\talpha \ge 1$; see~\eqref{Eq:Glogn}.
We only study the case $n_1=n_2=1$, but the case with $n_1 = n_2 > 1$ can be
easily studied using the techniques developed in this paper. \medskip

In this paper, we therefore limit ourselves to two cases. The first
case that corresponds to $n_2=0$ and $n_1=n \ge 1$ with $\talpha=\alpha>0$ is 
\begin{align}
	\logn{1/G(x)} \sim \Gi(x) := x^\alpha L(x).
\label{Eq:Gexpn}
\end{align}
The second case that corresponds to $n_1=n_2=1$ with $\talpha\ge 1$ is
\begin{align}
	\frac{\log(1/G(x))}{\log x} \sim \Gii(x):=\Gi(\lo{n} x),
\label{Eq:Glogn}
\end{align}
where $n \ge 1$ and $\alpha > 0$. Note that for the second case
$\talpha = 1 + \alpha>1$ for $n=1$ and  $\talpha=1$ for $n \ge 2$.
From now on, $n$ and $\alpha$ are reserved for this role, with $n$ called the \emph{tail index} and $\alpha$ the \emph{tail parameter}.
When $G$ satisfies \eqref{Eq:Gexpn}, we will say that $G$ is of type I and
when $G$ satisfies \eqref{Eq:Glogn}, we will say that $G$ is of type II.
 Note that not only do the two types of decay not cover the entire Gumbel class, but conversely \eqref{Eq:Gfast} alone cannot guarantee that $G$ falls into the Gumbel class.
For instance, consider $\log G(x) = -x - \sin(x)$, which is of type~I but
does not belong to the Gumbel class (see, e.g., \cite{deHaan06}).

\subsection{Statement of theorems}

Our main concern is how $X(t)$, $W_t$, and the empirical
fitness distribution (\efd) behave at large times $t$ on survival.
The \efd\ is defined via its cumulative distribution function $\Psi(f,t)$~as
\begin{align}
\Psi(f,t):= \frac{1}{X(t)}\sum_{i=1}^{X(t)} \Theta(f-F_i),
\label{Eq:efd_def}
\end{align}
where $F_i$ is the fitness of $i$-th individual and $\Theta(x)$ is the Heaviside
step function with $\Theta(0)=1$.
We denote the mean and the standard deviation of $\Psi(f,t)$ by $S_t$ and
$\sigma_t$, respectively.
In case that no individual is left at $t$, we define
$\Psi(f,t) =1$ for $f\ge 0$ and $S_t=\sigma_t=0$.
We define the survival event $\Su$ and survival probability $p_s$ as
$$
\Su := \{X(t) \neq 0 \text{ for all } t \},\quad
p_s := \P(\Su).
$$
Needless to say, $p_s$ depends on the initial condition, but the initial
condition dependence does not play any role in what follows.
Now we state the main theorems.
\begin{theorem}
\label{Th:mainthm}
If $G$ is of type I,
then almost surely on survival
\begin{align}
\lim_{t\to\infty} \frac{\log X(t)}{t \logn t}=\frac1\alpha,\quad
\lim_{t\to\infty} \frac{W_t}{u_n(t)}=1,
\label{Eq:ft}
\end{align}
where
\begin{align}
u_n(t) &:= 
\left ( \lo{n-1} t \right )^{1/\alpha} \omega_W\left (\lo{n-1} t \right ),
\label{Eq:unt}
\\
\label{Eq:omega}
\omega_W(y)&:= \left (\frac{\log y}{\alpha}\right )^{\delta_{n,1}/\alpha} 
\left [ L \left ( y^{1/\alpha} \right )\right ]^{-1/\alpha} ,
\end{align}
with $\delta_{n,1}$ to be the Kronecker delta symbol.
\end{theorem}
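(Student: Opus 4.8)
The natural strategy is to couple the population growth to the record process of the fittest mutant, and to establish the two assertions in tandem: the second limit (on $W_t$) essentially feeds the first (on $\log X(t)$). Let me sketch the two directions.

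\textbf{Upper bounds.} In generation $t$ there are at most $t+1$ families, the fittest being seeded by a mutant of fitness $W_s\le Q_s$ for some $s\le t$, and each family grows multiplicatively at rate $(1-\beta)F_i$ where $F_i$ is the family fitness. Hence $X(t)\le \sum_{s\le t} \Xi(s)\,\prod_{r=s}^{t-1}\big((1-\beta)Q_s\big)$ up to bookkeeping, and crucially $Q_t$ is itself controlled: a mutant of fitness exceeding $x$ appears in generation $s$ only if $\Xi(s)\cdot\beta\cdot G(x)$ is not too small, and $\Xi(s)$ itself grows at most like the bound being derived. This gives a self-consistent (bootstrap) inequality: if $\log X(t)\le (1+\epsilon)\tfrac1\alpha t\logn t$ holds up to time $t$, then $\Xi(t+1)\le X(t)\cdot(1-\beta)Q_t$ and $Q_t$ cannot exceed (a little more than) $u_n(t)$, because producing such a fitness needs $G(x)\gtrsim 1/\Xi(t)$, i.e.\ $\logn{1/G(x)} = x^\alpha L(x)$ roughly at most $\log\Xi(t)/(\text{iterated log factors})$; inverting \eqref{Eq:Gexpn} and using (A1)--(A2) to absorb the slowly varying corrections yields exactly the form $x\lesssim u_n(t)$. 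Feeding this back, $\log X(t+1)\le \log X(t)+\log Q_t \le \log X(t)+\tfrac1\alpha\logn t + \text{l.o.t.}$, and summing gives the matching upper bound for $\log X(t)$. The Poisson fluctuations in $\Xi(s)$ and the family sizes are handled by standard concentration (Borel--Cantelli on exponential martingales / Chernoff bounds), since we only need logarithmic precision.

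\textbf{Lower bounds.} Here one exhibits, almost surely on survival, a sequence of mutants whose fitness tracks $u_n(t)$. Once the population has size $M$, in the next generation the number of mutants is Poisson with mean $\sim\beta(1-\beta)^{-1}M$ (or $\beta M$, model-dependent), so the fittest mutant $W_{t+1}$ satisfies $G(W_{t+1})\approx 1/(\beta M)$, i.e.\ $W_{t+1}$ is of order $u_n$ evaluated at the ``effective time'' corresponding to population $M$. A family founded by a mutant of fitness $w$ grows, over the next $k$ generations, like $e^{k\log((1-\beta)w)}$ before it is overtaken, so $\log X$ increases by $\approx\log w$ per generation, and $\log w \approx \tfrac1\alpha\logn t$ once $\log X(t)\approx\tfrac1\alpha t\logn t$. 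Turning this into an almost-sure statement requires a renewal-type argument: partition time into blocks, show that within each block a sufficiently fit mutant appears with probability bounded below (using $G(u_n(t)(1-\epsilon))\cdot\E[\Xi(t)]\to\infty$, which follows from \eqref{Eq:Gexpn} and (A1)), and that conditioned on survival infinitely many blocks succeed. The matching almost-sure lower bound on $W_t$ itself comes from the same mutant-production estimate combined with the already-established lower bound on $X(t)$.

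\textbf{Main obstacle.} The delicate point is the \emph{self-consistent} nature of the estimate: $X(t)$ is driven by $W_t$, but $W_t$'s typical size is determined by $X(t)$ (more precisely by $\Xi(t)$). Making this bootstrap rigorous---choosing the right inductive hypothesis with the correct lower-order slack, and verifying that the slack does not accumulate over the $\sim t$ steps---is the technical heart, and it is precisely where assumptions (A1) and (A2) on the slowly-varying-type function $L$ are needed, to guarantee that inverting $\Gi(x)=x^\alpha L(x)$ introduces only corrections that are swallowed by the $(\log y/\alpha)^{\delta_{n,1}/\alpha}[L(y^{1/\alpha})]^{-1/\alpha}$ factor in $\omega_W$. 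A secondary difficulty, specific to the \fm, is that only the single fittest mutant survives each generation, so one must rule out the scenario where the record fitness stagnates; this is where the lower-bound block argument must be run carefully, and where the distinction between $\Xi(t)$ and $X(t)$ matters. I would expect the bulk of Sections~4--5 to be devoted to exactly these points, with the heuristic differential equation of Section~3 serving as the guide for the correct form of the inductive bound.
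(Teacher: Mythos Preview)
Your plan is correct and aligns closely with the paper's actual proof: both directions proceed via the self-consistent bootstrap you describe, with Lemma~\ref{Lem:Gx} encoding precisely the inversion of $g_I$ you identify as the crux, Lemma~\ref{Lem:Xup} implementing the upper-bound induction on the paired events $\{\log\Xi(t)\le\cdots\}\cap\{W_t\le(1+\epsilon_1)u_n(t+m)\}$, and the lower bound (Lemmas~\ref{Lem:k0I}--\ref{Cor:Xlow}) built from a carefully seeded initial configuration plus a checkpoint sequence $(m_\ell)$ of mutants whose Galton-Watson descendants cover successive time intervals, then transferred to arbitrary initial conditions by the Markov property---which is exactly your ``renewal-type block argument'' made concrete.

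One small correction: you flag the \fm-versus-\mm distinction as a secondary difficulty, but the paper explicitly notes that no such distinction is needed in the proof of Theorem~\ref{Th:mainthm}; the arguments track only the single fittest mutant lineage $N_k(t)$ and the total offspring count $\Xi(t)$, both of which behave identically in the two variants for the purposes of these bounds.
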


\begin{theorem}\label{Th:mainII}
If $G$ is of type II, 
then almost surely on survival
\begin{align*}
\lim_{t\to\infty} \frac{\lo 2{X(t)}}{\log t}=1+\frac1\alpha,\quad
\lim_{t\to\infty} \frac{\lo 2{W_t}}{\log t}=\frac1\alpha,
\end{align*}
for $n=1$, 
\begin{align*}
\lim_{t\to\infty} \frac{\lo 3 {X(t)}}{\log t}=
\lim_{t\to\infty} \frac{\lo 3 {W_t}}{\log t}=\frac1{1+\alpha},
\end{align*}
for $n=2$, and
\begin{align*}
\lim_{t\to\infty} \frac{1}{\lo{n-1} t}\log\left ( \frac{\lo 2 {X(t)}}{t}\right )=
\lim_{t\to\infty} \frac{1}{\lo{n-1} t}\log\left ( \frac{\lo 2 {W_t}}{t}\right )=-\alpha,
\end{align*}
for $n \ge 3$.
\end{theorem}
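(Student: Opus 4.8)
The plan is to reduce the type~II statements to the type~I result (Theorem~\ref{Th:mainthm}) by the change of variables built into the definition \eqref{Eq:Glogn}, namely that $G$ is of type~II with parameters $(n,\alpha)$ precisely when $\log(1/G(x))/\log x$ behaves like $\Gii(x)=\Gi(\lo n x)=(\lo n x)^\alpha L(\lo n x)$. First I would make this precise: write $\widehat G(y):=G(\en n y)$ for the tail function obtained by the substitution $x=\en n y$; then $\log(1/\widehat G(y))=\log(1/G(\en n y))\sim \log(\en n y)\cdot\Gii(\en n y)=\en{n-1}{y}\cdot y^\alpha L(y)$. For $n=1$ this already exhibits $\widehat G$ as (essentially) a type~I tail with one further iterated exponential twist ($e^y$ times a power), and for $n\ge 2$ the prefactor $\en{n-1}{y}$ dominates a stretched-exponential correction, so $\log(1/\widehat G(y))\sim\en{n-1}{y}$ up to the light correction $y^\alpha L(y)$. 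The point is that the fittest mutant $W_t$ in the type~II model is, after applying $\lo n{\cdot}$, governed by the same extremal mechanics as a type~I fittest mutant: in generation $t$ there are of order (a polynomially growing number of) fresh mutant fitnesses, each with tail $G$, so $\lo n{W_t}$ is determined by $\widehat G^{-1}$ evaluated at a polynomially small probability.

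The key steps, in order, are: (i) establish the analogue of the $W_t$-asymptotics first. By the same extreme-value/Borel--Cantelli argument used for Theorem~\ref{Th:mainthm} (and already in \cite{Park2023}), on survival the number of mutants up to generation $t$ grows at least polynomially and at most like $X(t)$, so $W_t\approx G^{-1}(1/\mathrm{poly}(t))$ in the appropriate asymptotic sense; inverting $\log(1/G(x))\sim\log x\cdot\Gi(\lo n x)$ and solving for $x$ yields exactly the stated limits for $\lo 2{W_t}$ (for $n=1$, $\lo 3{W_t}$ for $n=2$, and $\frac1{\lo{n-1}t}\log(\lo 2{W_t}/t)\to-\alpha$ for $n\ge 3$). (ii) Establish the matching $X(t)$-asymptotics. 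Here I would run the same self-consistent bootstrap as in the type~I proof: the population grows at rate roughly $\log Q_t\sim\log W_t$ per generation once the fittest family dominates, so $\log X(t)\sim\sum_{s\le t}\log W_s\sim t\log W_t$ (the sum is dominated by its last terms because $W_s$ grows super-polynomially). Taking one more logarithm converts $\log X(t)\approx t\,\log W_t$ into $\lo 2{X(t)}\approx \lo 2{W_t}+\log t$, and since in every regime $\lo 2{W_t}$ and $\log t$ are comparable up to the stated constants, one reads off the three displayed limits (e.g. for $n=1$, $\lo 2{W_t}\sim\frac1\alpha\log t$ gives $\lo 2{X(t)}\sim(1+\frac1\alpha)\log t$; for $n\ge 2$ the $\log t$ term is negligible against $\lo 2{W_t}$, so $X(t)$ and $W_t$ share the same leading asymptotics). (iii) Supply the upper bound: as in the bounded and Fréchet cases, in generation $t$ there are at most $t+1$ families, the $i$-th with fitness $\le Q_i\le$ the running maximum of the mutant fitnesses, so $\log X(t)\le \log(t+1)+\max_{i\le t}\sum_{s=i}^{t}\log((1-\beta)Q_s)$, which after taking logarithms is controlled by the fittest-mutant asymptotics from step (i).

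I expect the main obstacle to be the bootstrap in step~(ii): closing the self-consistency loop between $X(t)$ and $W_t$ rigorously, rather than heuristically. One must control (a) the time lag before the fittest mutant in a given generation actually comes to dominate the population — its family must overtake the incumbent bulk, which takes a number of generations depending on the ratio of fitnesses and on $\beta$ — and (b) the error from replacing $\sum_{s\le t}\log W_s$ by $t\log W_t$, which requires quantitative lower bounds on how fast $W_s$ increases (this is where assumptions \textbf{(A1)}--\textbf{(A2)} on $L$ enter, guaranteeing the slowly-varying corrections do not spoil the monotone super-polynomial growth). Since the theorem only asserts limits of iterated logarithms, there is comfortable slack, so the strategy should be to prove the analogue of Theorem~\ref{Th:mainthm}'s estimates with whatever polynomial-in-$t$ error is convenient and let the outer $\lo 2{\cdot}$ or $\lo 3{\cdot}$ absorb it; the real work is organizing the same machinery of Sections~\ref{Sec:XW}--6 so that it applies verbatim after the $\lo n{\cdot}$ substitution, checking in particular that the type~II tail $\widehat G$ still satisfies the structural hypotheses needed for those lemmas.
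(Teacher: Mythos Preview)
Your proposal contains a genuine error in step~(i) that propagates through everything else. You write that on survival ``the number of mutants up to generation $t$ grows at least polynomially and at most like $X(t)$, so $W_t\approx G^{-1}(1/\mathrm{poly}(t))$.'' But for type~II tails $X(t)$ grows far faster than any polynomial (indeed $\lo 2 X(t)$ is already of polynomial order in $t$), and the relevant extremal relation is $-\log G(W_t)\approx\log X(t)$, not $\approx c\log t$. If you invert $\log(1/G(x))\sim \log x\,(\lo n x)^\alpha L(\lo n x)$ at level $c\log t$ you get, for $n=1$, $\log W_t$ of order $(\log t)^{1/(1+\alpha)}$ and hence $\lo 2 W_t/\log t\to 0$, contradicting the claimed limit $1/\alpha$. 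The correct input is $\log X(t)$, which is itself determined by the past $W_s$; this is exactly the circularity you flag in step~(ii), and it cannot be broken by treating the mutant count as merely polynomial. The change-of-variables $\widehat G(y)=G(\en n y)$ is also not a reduction of the \emph{model}: offspring numbers are Poisson with mean equal to the fitness itself, so applying $\lo n{\cdot}$ to fitnesses does not produce a type~I branching process, and the machinery of Section~\ref{Sec:XW} does not apply ``verbatim after the substitution.''

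The paper does not attempt any such reduction. Instead it reruns the Section~\ref{Sec:XW} scheme directly for type~II with new target functions: it defines $\chi(t,n,\nu)$ (the candidate for $\log X(t)$) and $U(t,n,\nu,a)$ (the candidate for $\log W_t$) case by case in $n$, and then proves a variation of Lemma~\ref{Lem:Xup} (upper bound: the events $\{\log\Xi(t)\le\chi(\tau_t,n,\nu_n)\}$ and $\{\log W_t\le U(\tau_t,n,\nu_n,a_n)\}$ propagate jointly via \eqref{Eq:Pup} and \eqref{Eq:Wup}) and a variation of Lemma~\ref{Lem:LB} (lower bound: a carefully seeded initial configuration plus \eqref{Eq:Plow} and \eqref{Eq:Wdown} forces $\log\fX(t)\ge\chi(t+t_0,n,\nu_n)$ and $\log W_t\ge U(t+t_0,n,\nu_n,a_n)$). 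The circularity is broken not by a bootstrap on $\sum_s\log W_s$ but by choosing $\nu_n$ and $a_n$ with just enough slack that, assuming the bound on $\Xi$ at time $t-1$, the Galton--Watson estimates of Section~\ref{Sec:prep} force the bound on $\Xi$ at time $t$, and assuming the bound on $\Xi$ at time $t$, Lemma~\ref{Th:Wa} forces the bound on $W_t$. The $\epsilon$-room is then squeezed out at the end exactly as in Lemmas~\ref{Cor:Xup} and~\ref{Cor:Xlow}. If you want to repair your approach, the right fix is to replace ``$\mathrm{poly}(t)$'' by an ansatz for $\log X(t)$ of the form the theorem predicts and then verify self-consistency in both directions---which is precisely what the paper's two lemmas do.
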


Based on simulations, we conjecture the following theorem regarding the \efd\ formulated in the case of the FMM. A rigorously proved version of this result and more details on our simulations will be given in Section~7.

\begin{theorem}[{\bf Conjecture}]
\label{Th:efd}
For each type I tail function,
there are positive functions $v(t)$ and $\s(t)$ such that
$$
\lim_{t\rightarrow\infty} v(t) = \infty,\quad
\lim_{t\rightarrow\infty} \frac{\s(t)}{v(t)} = 0,
$$ and
almost surely on survival
$$
\lim_{t\rightarrow\infty} \Psi(v(t) +  y\s(t),t) = 
\Upsilon(y),$$
where
\begin{align}
\label{Eq:normal}
\Upsilon(y):= \frac{1}{\sqrt{2\pi}}\int_{-\infty}^y \exp \left ( -\frac12x^2
\right ) dx.
\end{align}
In particular, if $n=1$ and $\alpha>2$ or if $n \ge 2$, then
$\s(t) \rightarrow 0$ as $t\rightarrow\infty$ and, for $y\not=0$,
$$
\lim_{t\rightarrow\infty} \Psi(v(t) +  y,t) = \Theta(y) \quad  \text{ almost surely on survival.}
$$
\end{theorem}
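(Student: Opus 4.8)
Since the statement is phrased as a conjecture, I describe how to establish the rigorous counterpart valid for the simplified fittest mutant model, in which the family founded in generation $s$ is declared to carry the deterministic fitness $u_n(s)$ rather than the random fittest‑mutant fitness $W_s$; at the end I say why passing to the genuine FMM is the real difficulty. The starting point is the family decomposition: every individual alive in generation $t$ belongs to exactly one family, founded by the fittest mutant of some generation $s\le t$, whose size I write $N_s(t)$, so that $X(t)=\sum_{s=0}^t N_s(t)$ and $\Psi(f,t)=X(t)^{-1}\sum_{s:\,W_s\le f}N_s(t)$. Poisson thinning shows that, conditionally on the founding fitnesses, the processes $t\mapsto N_s(t)$ are independent Galton--Watson processes with $\mathrm{Poisson}((1-\beta)W_s)$ offspring law; hence $\log N_s(t)=(t-s)\log((1-\beta)W_s)+\log M_s+o(1)$ for a martingale limit $M_s$ with $\E M_s=1$ and $\V M_s=O(1/W_s)$. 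In the simplified model $W_s$ is replaced by $u_n(s)$, and Theorem~\ref{Th:mainthm} justifies this substitution asymptotically for the true model.

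\smallskip
\noindent\emph{Laplace analysis of the profile.} Set $\Phi_t(s):=(t-s)\log\!\bigl((1-\beta)u_n(s)\bigr)$, so that $\log N_s(t)=\Phi_t(s)+\log M_s+o(1)$. Using \textbf{(A3)}--\textbf{(A4)} on $L$ one checks that $\Phi_t$ is, for large argument, strictly concave, hence unimodal with a unique interior maximiser $s^\ast=s^\ast(t)$ solving $(t-s^\ast)(\log u_n)'(s^\ast)=\log\!\bigl((1-\beta)u_n(s^\ast)\bigr)$; for $n=1$ this gives $s^\ast\sim t/\log t$. With $\tau(t):=\bigl(-\Phi_t''(s^\ast)\bigr)^{-1/2}$ one computes that $\tau(t)\to\infty$ (for $n=1$, $\tau(t)\sim\sqrt{\alpha t}/\log t$). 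Since $\tau(t)\to\infty$, Euler--Maclaurin replaces the relevant discrete sums by integrals and a standard Laplace expansion yields, uniformly for $x$ in compact sets,
\[
\frac{\sum_{s\le s^\ast+x\tau(t)}e^{\Phi_t(s)}}{\sum_{s}e^{\Phi_t(s)}}\ \longrightarrow\ \Upsilon(x),
\]
with errors controlled by the third derivative of $\Phi_t$ near $s^\ast$ and by the exponential smallness of the contribution of $|s-s^\ast|\gg\tau(t)$.

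\smallskip
\noindent\emph{From generation to fitness.} On the window $|s-s^\ast|\le C\tau(t)$ Taylor expansion gives $u_n(s)=u_n(s^\ast)+u_n'(s^\ast)(s-s^\ast)+O\!\bigl(u_n''(s^\ast)\tau(t)^2\bigr)$, and \textbf{(A1)}--\textbf{(A4)} imply $u_n''(s^\ast)\tau(t)^2=o\!\bigl(u_n'(s^\ast)\tau(t)\bigr)$: on the scale that matters, fitness is an affine function of the founding generation. Putting $v(t):=u_n(s^\ast(t))$ and $\s(t):=u_n'(s^\ast(t))\,\tau(t)$ transfers the previous display to $\Psi(v(t)+y\s(t),t)\to\Upsilon(y)$; from \textbf{(A1)} one reads off $v(t)\to\infty$ and $\s(t)/v(t)\to0$, and for $n=1$ one finds $\s(t)\sim v(t)/\sqrt{\alpha t}$. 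When $n=1,\alpha>2$ or $n\ge2$ one moreover has $v(t)=o(\sqrt t)$ (directly from the growth of $u_n$ and of $s^\ast$), hence $\s(t)\to0$; for fixed $y\neq0$ the point $v(t)+y$ then lies $|y|/\s(t)\to\infty$ standard deviations from $v(t)$, and a direct bound on the Gaussian tail of the profile --- equivalently, on the fraction of the population founded in generations with $|u_n(s)-v(t)|>|y|$, which forces $|s-s^\ast|\gg\tau(t)$ --- gives $\Psi(v(t)+y,t)\to\Theta(y)$.

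\smallskip
\noindent\emph{Almost--sure upgrade and the main obstacle.} Everything so far is convergence in probability; for the a.s.\ statement on $\Su$ I would control all relevant family sizes simultaneously. A union bound together with concentration for martingale limits of large--mean Galton--Watson processes (Chebyshev or exponential inequalities, using $\V M_s=O(1/u_n(s))$) shows that with overwhelming probability $N_s(t)=e^{\Phi_t(s)+o(1)}$ holds simultaneously for the $\asymp\tau(t)$ generations near $s^\ast$, while the remaining families are bounded crudely from above and contribute an exponentially small fraction; Borel--Cantelli along a subsequence of times, combined with monotonicity of $\Psi$ in $f$ and near--monotonicity in $t$, then upgrades to the a.s.\ conclusion. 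The genuine obstacle --- and the reason the full statement is only conjectured --- is to carry out this last step for the true FMM: there $W_s$ is the maximum of $\mathrm{Poisson}(\beta Z_{s-1})$ independent $\mu$--samples, and although its fluctuations about $u_n(s)$ are of lower order than needed for Theorems~\ref{Th:mainthm}--\ref{Th:mainII}, they enter $\Phi_t(s)$ linearly and multiplied by $t-s\asymp t$, so they directly perturb the wave location $v(t)$; pinning $v(t)$ down to precision $o(\s(t))$ would require uniform fine control of the joint law of $(W_s)_{s\le t}$, which is the missing ingredient. A secondary nuisance is making the Laplace expansion uniform while the summands carry the multiplicative errors $M_s$, which forces the concentration estimates of the a.s.\ step to be performed already at the level of the in--probability statement.
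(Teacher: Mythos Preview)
Your proposal is on the right track and correctly identifies both the mechanism (Laplace analysis of the profile $s\mapsto (t-s)\log((1-\beta)u_n(s))$) and the genuine obstruction (fluctuations of $W_s$ enter multiplied by $t-s$ and destroy control of the wave location). The paper does not prove the conjecture either; it proves it only for the two simplified models you allude to, and your outline for that case is sound.

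The route the paper takes differs from yours in organisation. Rather than carrying the martingale factors $M_s$ through the Laplace analysis, the paper first treats a fully deterministic model (DFMM) in which $N_k(t)$ is \emph{replaced} by $((1-\beta)u_n(k))^{t-k}$; the travelling wave for this model (Lemma~\ref{Lem:deter}) is pure calculus, essentially your Laplace step together with a careful sum-to-integral comparison (Lemma~\ref{Lem:rudiment}) and explicit third/fourth derivative bounds (Lemma~\ref{Lem:x0t0}). It then passes to the semi-deterministic model (SFMM, $W_k=u_n(k)$ but $N_k$ stochastic) by a sandwiching argument: Lemma~\ref{Lem:strict} gives path-wise bounds $|N_k(t)/\theta_k^{t-k}-1|\le 2\theta_k^{-(1-2\epsilon)/2}$ for all $t\ge k$ simultaneously with probability $\ge 1-\exp(-\theta_k^{2\epsilon}/5)$, Lemma~\ref{Thm:efd} sums these over $k$, and Theorem~\ref{Thm:sfm} squeezes the SFMM empirical distribution between $(1\pm\epsilon)$ times the DFMM one. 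This is cleaner than your Borel--Cantelli-along-a-subsequence idea, because the uniform-in-$t$ bound on each family removes the need for any monotonicity of $\Psi$ in $t$ (which is not obvious).

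One point you should flag: the paper's SFMM result (Theorem~\ref{Thm:sfm}) is \emph{restricted} to $n=1$, or $n=2$ with $\alpha<1$; see Remark~\ref{Rem:um}. The reason is that the summability condition $\sum_k\exp(-c\,u_n(k)^{2\epsilon})<\infty$ underlying the almost-sure control of the $N_k$'s requires $u_n(k)$ to grow at least like a power of $\log k$, which fails for $n\ge3$ and for $n=2,\alpha\ge1$. Your martingale-limit approach via $\V M_s=O(1/u_n(s))$ would face the same obstruction (indeed a worse one if you use only Chebyshev), so your claim that the argument goes through for all type~I tail functions needs this caveat.
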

\medskip

\begin{remark} \label{Rem:mm}A similar statement is conjectured for the MMM where a fraction $1-\beta$ of the mass in the \efd\ enters the travelling wave and a fraction $\beta$ remains in the bulk.
\end{remark}

\begin{corollary}
Given Theorem~\ref{Th:efd}  the empirical mean fitness satisfies 
$$\lim_{t\to\infty} \frac{S_t}{v(t)}=1 \quad \text{  almost surely on survival.}$$
\end{corollary}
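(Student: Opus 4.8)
The plan is to prove the two matching bounds $\liminf_{t\to\infty}S_t/v(t)\ge1$ and $\limsup_{t\to\infty}S_t/v(t)\le1$, working throughout on the survival event $\Su$, on which $X(t)\ge1$ so that $S_t$ and $Q_t$ are finite. The only facts I take from Theorem~\ref{Th:efd} are that, almost surely on survival, $1-\Psi(v(t)+y\s(t),t)\to1-\Upsilon(y)$ for every $y\in\mathbb R$, together with $v(t)\to\infty$ and $\s(t)/v(t)\to0$. The argument rests on the identity
$$ S_t=\frac1{X(t)}\sum_{i=1}^{X(t)}F_i=\int_0^\infty\bigl(1-\Psi(f,t)\bigr)\,df, $$
on the monotonicity of $f\mapsto1-\Psi(f,t)$, and on the fact that $1-\Psi(f,t)=0$ for $f>Q_t$.

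For the lower bound, fix $M>0$; for $t$ large enough monotonicity gives $S_t\ge(v(t)-M\s(t))\,(1-\Psi(v(t)-M\s(t),t))$, and dividing by $v(t)$ and using $\s(t)/v(t)\to0$ and $1-\Psi(v(t)-M\s(t),t)\to1-\Upsilon(-M)=\Upsilon(M)$ yields $\liminf_t S_t/v(t)\ge\Upsilon(M)$; letting $M\to\infty$ finishes this direction. For the upper bound, fix $M>0$ and $\epsilon>0$ and split the integral at $v(t)+M\s(t)$ and at $(1+\epsilon)v(t)$. Monotonicity and Theorem~\ref{Th:efd} bound the lowest piece by $v(t)(1+o(1))$ and the middle piece by $\epsilon v(t)(1-\Upsilon(M))(1+o(1))$, while the top piece is at most $Q_t\,(1-\Psi((1+\epsilon)v(t),t))$ because $1-\Psi(\cdot,t)$ vanishes beyond $Q_t$. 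Dividing by $v(t)$ and passing to $\limsup$ gives
$$ \limsup_{t\to\infty}\frac{S_t}{v(t)}\le 1+\epsilon\bigl(1-\Upsilon(M)\bigr)+\limsup_{t\to\infty}\frac{Q_t}{v(t)}\bigl(1-\Psi((1+\epsilon)v(t),t)\bigr); $$
if the final $\limsup$ is zero for each fixed $\epsilon>0$, then letting $M\to\infty$ gives $\limsup_t S_t/v(t)\le1$ and completes the proof.

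Thus everything comes down to the single claim that, almost surely on survival, $\tfrac{Q_t}{v(t)}\bigl(1-\Psi((1+\epsilon)v(t),t)\bigr)\to0$ for every fixed $\epsilon>0$, and this is the hard part. Theorem~\ref{Th:efd} alone only yields $1-\Psi((1+\epsilon)v(t),t)\to0$ --- indeed $(1+\epsilon)v(t)=v(t)+y_t\s(t)$ with $y_t\to\infty$ and $1-\Psi$ is monotone --- but the right front of the travelling wave sits near fitness $W_t\sim u_n(t)$, which by Theorem~\ref{Th:mainthm} is of strictly larger order than $v(t)$, so $Q_t/v(t)\to\infty$ and one genuinely needs a \emph{rate} of decay for $1-\Psi(\cdot,t)$ above the bulk, which weak convergence does not supply. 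I would obtain such a rate from the finer profile estimates developed in Section~7 for the rigorous travelling-wave theorem, together with the a priori bound $Q_t=u_n(t)^{1+o(1)}$ that comes out of the extreme-value analysis behind Theorems~\ref{Th:mainthm} and~\ref{Th:mainII}; equivalently, one shows directly that the individuals with fitness above $(1+\epsilon)v(t)$ --- which must be non-mutated descendants of the fittest mutants of the most recent generations, hence counted by the progeny numbers $N_s(t)$ whose growth is controlled in the proofs of Theorems~\ref{Th:mainthm} and~\ref{Th:mainII} --- number only $o\bigl(X(t)v(t)/u_n(t)\bigr)$.
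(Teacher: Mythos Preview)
Your lower bound is correct; the paper gets it even more directly via Markov's inequality for the empirical distribution, $1-\Psi\bigl(\tfrac{v(t)}{1+\epsilon},t\bigr)\le(1+\epsilon)\tfrac{S_t}{v(t)}$, whose left side tends to $1$ by Theorem~\ref{Th:efd} since $\tfrac{v(t)}{1+\epsilon}=v(t)+y_t\s(t)$ with $y_t\to-\infty$.

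For the upper bound the paper does not decompose the tail integral as you do; it argues by contradiction: if $S_{t_k}\ge(1+2\epsilon')v(t_k)$ along a subsequence, then $\Psi\bigl(\tfrac{S_{t_k}}{1+\epsilon'},t_k\bigr)\ge\Psi\bigl(\tfrac{1+2\epsilon'}{1+\epsilon'}v(t_k),t_k\bigr)\to1$, and the paper asserts this ``contradicts the definition of $S_t$.'' Your route instead isolates the residual term $\tfrac{Q_t}{v(t)}\bigl(1-\Psi((1+\epsilon)v(t),t)\bigr)$ and honestly flags that Theorem~\ref{Th:efd} supplies only $1-\Psi\to0$ with no rate, while $Q_t/v(t)\to\infty$ (for type~I with $n=1$ one has $Q_t\sim u_1(t)$ and $u_1(t)/v_1(t)\sim(\log t)^{1/\alpha}$, as the paper notes after Lemma~\ref{Lem:deter}).

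The gap you identify is real and not an artefact of your decomposition: the paper's contradiction step has the \emph{same} hidden issue. Knowing that $\Psi\bigl(\tfrac{S_{t_k}}{1+\epsilon'},t_k\bigr)\to1$ does not by itself contradict $S_{t_k}$ being the empirical mean, since a vanishing mass fraction sitting near $Q_{t_k}\gg v(t_k)$ can still drag the mean above $(1+2\epsilon')v(t_k)$. Both arguments therefore tacitly need some uniform-integrability control of the right tail of $\Psi(\cdot,t)$ beyond the bare weak-convergence statement of Theorem~\ref{Th:efd}. Your proposal to extract this from the profile estimates of Section~\ref{Sec:pfemp} --- bounding the total size of the families $N_s(t)$ born recently enough to have fitness above $(1+\epsilon)v(t)$ --- is the natural way to close it.
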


\begin{proof}
Fix $\epsilon>0$. By Markov's inequality, we have
$
1-\Psi( \frac{v(t)}{1+\epsilon},t) \le (1+\epsilon)\frac{S_t}{v(t)}.
$
As $\s(t)/v(t)\rightarrow 0$ as $t\rightarrow\infty$,
Theorem~\ref{Th:efd} implies that 
$
\lim\limits_{t\rightarrow\infty}\Psi( \frac{v(t)}{1+\epsilon},t) = 0.
$
Therefore, almost surely on survival, we have
$
\liminf\limits_{t\rightarrow\infty} \frac{S_t}{v(t)} \ge \frac{1}{1+\epsilon}.
$
As $\epsilon$ was arbitrary we have, almost surely on survival,
$\liminf\limits_{t\rightarrow\infty} \frac{S_t}{v(t)} \ge 1.$
\medskip

Now assume
$
\limsup\limits_{t\rightarrow\infty} \frac{S_t}{v(t)}  > 1.$ Then there is $\epsilon'>0$ and a strictly increasing sequence $(t_k)_{k=1}^\infty$ such that
$
S_{t_k} \ge v(t_k) (1+ 2\epsilon')
$
for all $k$.
As Theorem~\ref{Th:efd} implies
$
\lim\limits_{t\rightarrow\infty} 
\Psi( \frac{1+2\epsilon'}{1+\epsilon'}v(t),t) =1,
$
we have
$
\lim\limits_{k\rightarrow\infty} \Psi( \frac{S_{t_k}}{1+\epsilon'},t_k) =1,
$
which contradicts to the definition of $S_t$. Therefore, we conclude 
that 
almost surely on survival
$
\limsup\limits_{t\rightarrow\infty} \frac{S_t}{v(t)} \le 1,
$
which along with the lower bound gives $S_t \sim v(t)$.
\end{proof}

In Section~\ref{Sec:pfemp}, we will modify our model so that a version of Theorem~\ref{Th:efd} can be proved.

\section{\label{Sec:sim}Heuristic guide to Theorems~\ref{Th:mainthm} and~\ref{Th:mainII}}
Before delving into the proofs, we first
sketch the idea behind Theorems~\ref{Th:mainthm} and \ref{Th:mainII} by a mean-field type analysis of the \mm for a strictly decreasing continuous $G$ with $\Gi(x) \sim x^\alpha$.
Let us assume that at certain time~$t$,  the population size $X(t)$ is very large.
Once $X(t)$ is given, $W_t$ is sampled as 
$
 Z= [ 1- \beta G(W_t)]^{X(t)},
$
where $Z$ is uniformly distributed on $(0,1)$; see Lemma~\ref{Th:Wa}.
Neglecting fluctuation in the sense that 
$-\log G(W_t) \approx \log X(t)$, 
we have
\begin{align}
\log W_t \approx \frac{1}{\alpha} \log^{(n+1)}(X(t))
\label{Eq:Wxe}
\end{align}
for type I and
\begin{align}
\log W_t \approx \begin{cases}
\left [ \log X(t)\right ]^{1/(1+\alpha)}, & n=1,\\
\left [\log^{(n)}(X(t)) \right ]^{-\alpha}\log X(t), & n \ge 2,
\end{cases}
\label{Eq:Wxl}
\end{align}
for type II.
Since the mean fitness $S_t$ is anticipated not to be larger than $W_t$ 
and $\log X(t+1) \approx \log X(t) + \log S_t$, we have
$
\log X(t+1) - \log X(t) \le \log W_t.
$
Treating $t$ as a continuous variable and setting
$y = \log  X(t) $, we assume that 
the solutions of the differential equations
\begin{align}
\frac{dy}{dt} = \frac{1}{\alpha}\logn  y 
\label{Eq:dyI}
\end{align}
for type I
and 
\begin{align}
\label{Eq:dyII}
\frac{dy}{dt} = \begin{cases}
y^{1/(1+\alpha)}, & n=1,\\
y / ( \log^{(n-1)}(y))^\alpha, & n \ge 2,\\
\end{cases}
\end{align}
for type II
give the upper bound for the corresponding $\log X(t)$.
The asymptotic behaviour of the solution of \eqref{Eq:dyI} can be
found as 
$$
\frac{t}{\alpha} = \int^y 
\frac{dx}{\logn x}
= 
\frac{y}{\logn y}
+ \int^y \frac{1}{(\logn x)^2} 
\left ( \prod_{k=1}^{n-1} \frac{1}{\lo k x} \right ) dx
\approx 
\frac{y}{\logn y},
$$
which gives
$$
y \approx \frac{t}{\alpha} \logn y \approx
\frac{t}{\alpha}  \logn t,
$$
where we have used  {\bf (A2)} for $L(x)=\logn x$.
In a similar manner, we find the asymptotic solution of \eqref{Eq:dyII}
as $
y \approx t^{1+1/\alpha}$ if $n=1$,
$
y \approx \exp( t ^{1/(1+\alpha)})$ if $n=2$ and 
$
y \approx
\exp( t ( \log^{(n-2)}(t))^{-\alpha})$
if $n\ge 3$.
Accordingly, we anticipate
$$
\log X(t)\lessapprox \frac{1}{\alpha} t \log^{(n)}(t)
$$
for type I and
$$
\log X(t)\lessapprox 
\begin{cases}t^{1+1/\alpha}, & n=1,\\
\exp \left (t^{1/(1+\alpha)}\right ), & n= 2,\\
\exp \big( t \big( \log^{(n-2)}(t)\big)^{-\alpha}\big), & n \ge 3.
\end{cases}
$$
for type II.
\pagebreak[3]\medskip

Theorems~\ref{Th:mainthm} and \ref{Th:mainII} actually state that to treat the above inequalities as equalities gives a good approximation.
If the inequalities are indeed equalities, then we expect
\begin{align*}
W_t \approx \big( \log^{(n-1)}(t\logn t) \big)^{1/\alpha}
\end{align*}
for type I and
\begin{align*}
W_t \approx 
\begin{cases}
\displaystyle \exp \left (t^{1/\alpha}\right ), & n=1,\\
\displaystyle \exp \left( t^{-\alpha/(1+\alpha)} 
\exp \left (t^{1/(1+\alpha)}\right )\right), & n =2,\\
\displaystyle \exp \Big( \big(\log^{(n-2)}(t) \big)^{-\alpha} 
\exp \big( t  (\log^{(n-2)}(t))^{-\alpha} \big) \Big), & n \ge 3,
\end{cases}
\end{align*}
for type II.
In the following sections, we make the above heuristics rigorous.

\section{\label{Sec:prep}Preparations}

In this section, we collect some tools to be used in the proofs of the
Theorems~\ref{Th:mainthm} and \ref{Th:mainII}.
To be self-contained, we begin by restating Lemma 2 of Ref.~\cite{Park2023} without repeating the proof. \begin{lemma}
\label{Lem:Wt}
On survival, $(W_t)_{t\ge 1}$ is almost surely an unbounded sequence.
\end{lemma}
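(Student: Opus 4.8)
The plan is to show that the fittest-mutant fitness $(W_t)_{t\ge1}$ cannot stay bounded on a set of positive probability intersected with the survival event $\Su$. Suppose, for contradiction, that there is a finite constant $M$ such that with positive probability we both survive and have $W_t\le M$ for all $t\ge 1$. On this event every individual in every generation carries a fitness bounded by $\max\{Q_0,M\}=:M'$, so the population is stochastically dominated by a Galton--Watson process in which the number of offspring of each individual is Poisson with mean at most $M'$ (in the \fm this only helps, since some mutants are discarded).

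First I would make the domination precise: conditionally on the current generation having $X(t)=k$ individuals with all fitnesses $\le M'$, the total offspring count $\Xi(t+1)$ is Poisson with mean $\sum_i F_i\le kM'$, and $X(t+1)\le\Xi(t+1)$. Next I would invoke the classical dichotomy for branching: a Galton--Watson process with bounded mean offspring number, conditioned to have all its fitnesses stay below $M'$, still has a well-defined Malthusian parameter; but the key point is simpler — I only need that on the event $\{W_t\le M\text{ for all }t\}$ the process is dominated by an \emph{ordinary} Galton--Watson process (offspring law Poisson$(M')$, independent across individuals and generations once we also resample the fitnesses to be the constant $M'$), which survives with probability strictly less than one and in particular does not grow at the rate forced by survival. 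Actually the cleanest route avoids growth rates entirely: I would show the dominating process has a positive extinction probability and use a Borel--Cantelli / renewal argument to conclude that on $\{W_t\le M\ \forall t\}$ the population returns to small sizes infinitely often and eventually dies, contradicting survival. Concretely, whenever $X(t)$ is small (say $X(t)\le K$ for a threshold $K$ chosen so that Poisson$(M')$ branching from $K$ individuals dies out with probability $\ge\rho>0$), there is a uniformly positive chance the whole population goes extinct within finitely many further generations; if additionally $W_s\le M$ keeps the population from ever escaping the bounded-fitness regime, then either $X(t)$ stays bounded forever (and then it is a positive-recurrent-like chain that hits $0$ a.s.) or it grows, but growth is impossible under a fixed mean-offspring bound in the sense that $\limsup \log X(t)/t\le\log((1-\beta)M')$, which still permits survival — so the return-to-small-sizes argument is the one that actually closes the loop.

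Let me restructure the decisive step. On the event $\mathcal{E}_M:=\Su\cap\{W_t\le M\ \forall t\ge1\}$, all fitnesses are at most $M'$ forever, so $(X(t))$ is dominated by a Galton--Watson process $(Y(t))$ with offspring distribution Poisson$(M')$. If $M'\le 1$ this $Y$ dies out a.s., immediately contradicting $\P(\mathcal{E}_M)>0$. If $M'>1$, the process $Y$ is supercritical, so domination alone is not enough; instead I would use that $X(t)$ itself, while confined to bounded fitnesses, is dominated above AND the \emph{branching} structure forces infinitely many generations in which $X(t+1)$ could be $0$: since each individual independently produces Poisson-mean-$\le M'$ offspring, the event $\{\Xi(t+1)=0\}$ has probability $\ge e^{-X(t)M'}$, which is bounded below by a positive constant whenever $X(t)\le K$. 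So it suffices to prove that on $\mathcal{E}_M$ we have $X(t)\le K$ infinitely often for a suitable $K$. For this I would argue that confined to bounded fitnesses the growth rate is at most $\log((1-\beta)M')$, hence finite, and more importantly that $W_t\le M$ for all $t$ is itself a probability-zero constraint given that, by Lemma~\ref{Th:Wa}'s sampling formula $Z=[1-\beta G(W_t)]^{X(t)}$, once $X(t)$ is even moderately large the chance that $W_t\le M$ is $[1-\beta G(M)]^{X(t)}$, which is summable if $X(t)\to\infty$.

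I expect the main obstacle to be handling the supercritical case $M'>1$ cleanly: domination by a supercritical Galton--Watson process does not by itself contradict survival, so the real content is combining two facts — (i) confined to fitnesses $\le M'$ the population growth rate is bounded, so $X(t)$ cannot explode faster than exponentially, and (ii) by the explicit conditional law of $W_t$ given $X(t)$, the probability of the single generation's fittest mutant being $\le M$ decays geometrically in $X(t)$, so if $X(t)$ ever grows without bound the Borel--Cantelli lemma forces some $W_t>M$ almost surely. The careful point is that $X(t)$ and the resampled fitnesses are not independent, so I would phrase everything through the filtration $\mathcal{F}_t=\sigma(X(s),\text{fitnesses up to }s:\,s\le t)$, use the conditional sampling identity for $W_{t+1}$ given $\mathcal{F}_t$, and apply a conditional Borel--Cantelli (Lévy's extension) to $\sum_t\P(W_{t+1}\le M\mid\mathcal{F}_t)$. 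If $X(t)\to\infty$ on a positive-probability sub-event of $\Su$ this sum is finite a.s., forcing $W_{t+1}>M$ infinitely often; and if $X(t)\not\to\infty$ then it returns to bounded values infinitely often, and the Poisson-extinction estimate kills survival. Either way $\P(\mathcal{E}_M)=0$, and taking a union over integer $M$ finishes the proof.
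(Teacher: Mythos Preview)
The paper does not actually prove this lemma here: it is restated verbatim as Lemma~2 of \cite{Park2023} ``without repeating the proof,'' so there is no in-paper argument to compare against. Your overall strategy---assume $\P\bigl(\Su\cap\{\sup_tW_t\le M\}\bigr)>0$, observe that on this event every fitness is at most $M':=\max\{Q_0,M\}$, and close with a conditional (L\'evy) Borel--Cantelli---is the natural one and almost certainly the one used in the companion paper. The excursion through domination by a supercritical Galton--Watson process is, as you yourself recognise midway, a dead end and can be dropped.

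There is one genuine gap in your final dichotomy. In the branch $X(t)\to\infty$ you want $\sum_t\P(W_{t+1}\le M\mid\mathcal F_t)<\infty$. Integrating out the Poisson offspring gives
\[
\P(W_{t+1}\le M\mid\mathcal F_t)=\exp\bigl(-\beta G(M)\,S_tX(t)\bigr),
\]
so summability requires the \emph{total fitness} $S_tX(t)$ to grow faster than $\log t$, not merely $X(t)\to\infty$. Since $\mu$ has no lower bound on its support, small mutant fitnesses are possible and this growth rate is not automatic from $X(t)\to\infty$ alone. A clean fix avoids the case split entirely: for $s:=S_tX(t)>0$ and $c:=\beta G(M)\in(0,1)$ one checks
\[
\P\bigl(\{\Xi(t+1)=0\}\cup\{W_{t+1}>M\}\,\big|\,\mathcal F_t\bigr)=e^{-s}+1-e^{-cs}\ge c,
\]
the bound being uniform in $s$ (the minimum over $s>0$ equals $1-(1-c)c^{c/(1-c)}>c$). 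Hence by L\'evy's conditional Borel--Cantelli the union occurs infinitely often almost surely; on $\Su$ the extinction event never occurs, which forces $W_{t+1}>M$ infinitely often. Taking a countable union over $M\in\N$ finishes the proof.
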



Other than in Ref.~\cite{Park2023} the gap between the generation where a mutant type first appears and the generation where it may become dominant is unbounded. Therefore we need tight bounds on the Galton-Watson process with Poisson offspring distribution, which become the focus of the rest of this section. We prepare this with some bounds on the Poisson series.

\begin{lemma}
\label{Lem:lowsum}
If $0<b<1$, $\theta>1$, 
$\lfloor b\theta \rfloor \ge 1$, and $(1-b)\theta \ge 1$,
then
\begin{align*}
\sum_{m=0}^{\lfloor b\theta \rfloor} e^{-\theta} \frac{\theta^m}{m!} &\le  \theta e^{-\theta (1-b+b\log b)}.
\end{align*}
\end{lemma}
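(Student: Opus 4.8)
Since the left-hand side is a lower-tail sum of Poisson weights, the plan is to dominate it by a convergent geometric series and then bound its largest term by a Chernoff-type estimate. Write $k:=\lfloor b\theta\rfloor$ and $a_m:=e^{-\theta}\theta^m/m!$, so the quantity to be estimated is $\sum_{m=0}^{k}a_m$, with $k\ge1$ by hypothesis. The consecutive ratios are $a_{m}/a_{m+1}=(m+1)/\theta$, so for $0\le j\le k$ one has $a_{k-j}/a_k=\prod_{i=1}^{j}(k-i+1)/\theta$, where every factor is at most $k/\theta\le b<1$ because $k=\lfloor b\theta\rfloor\le b\theta$; hence $a_{k-j}\le b^{j}a_k$, and therefore
\[
\sum_{m=0}^{k}a_m=\sum_{j=0}^{k}a_{k-j}\le a_k\sum_{j=0}^{\infty}b^{j}=\frac{a_k}{1-b}.
\]

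It then remains to estimate the top term $a_k$. From $e^{k}=\sum_{\ell\ge0}k^{\ell}/\ell!\ge k^{k}/k!$ one obtains the elementary bound $k!\ge(k/e)^{k}$ (valid since $k\ge1$), so $a_k\le e^{-\theta}(e\theta/k)^{k}=e^{k-\theta}(\theta/k)^{k}$. The function $x\mapsto e^{x-\theta}(\theta/x)^{x}$ has logarithmic derivative $\log(\theta/x)\ge0$ on $(0,\theta]$, hence is nondecreasing there, and since $k\le b\theta<\theta$ this yields $a_k\le e^{b\theta-\theta}b^{-b\theta}=e^{-\theta(1-b+b\log b)}$. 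Combining this with the previous display and using the hypothesis $(1-b)\theta\ge1$, i.e.\ $1/(1-b)\le\theta$, gives
\[
\sum_{m=0}^{k}a_m\le\frac{a_k}{1-b}\le\frac{e^{-\theta(1-b+b\log b)}}{1-b}\le\theta\,e^{-\theta(1-b+b\log b)},
\]
which is the claim.

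I do not expect any genuine obstacle: the argument is a careful repackaging of a standard Poisson lower-tail bound. The only points that need attention are the bookkeeping in the telescoping step (where $k\le b\theta$ is exactly what makes the ratios summable), the monotonicity of $x\mapsto e^{x-\theta}(\theta/x)^{x}$ on $(0,\theta)$, and verifying that the four hypotheses enter where claimed: $0<b<1$ for convergence of the geometric series, $\lfloor b\theta\rfloor\ge1$ for the Stirling-type step, $(1-b)\theta\ge1$ for the final factor $\theta$, and $\theta>1$ to keep the estimate in a meaningful regime. The factor $\theta$ is in fact not optimal—optimising the Chernoff bound directly removes it—but this elementary route produces precisely the stated form.
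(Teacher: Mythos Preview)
Your proof is correct and follows essentially the same route as the paper's: bound the sum by a multiple of its largest term, then estimate that term via $k!\ge(k/e)^k$ and the monotonicity of $x\mapsto x\log\theta-x\log x+x$ on $(0,\theta)$. The only cosmetic difference is in how the prefactor $\theta$ appears: the paper uses the cruder bound $\sum_{m=0}^{\ell}a_m\le(\ell+1)a_\ell\le\theta a_\ell$ (since $a_m$ is increasing for $m\le\ell$ and $\ell+1\le b\theta+1\le\theta$), whereas you sum a geometric series to get $a_k/(1-b)\le\theta a_k$; both invoke $(1-b)\theta\ge1$ at exactly this step.
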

\begin{proof}
Let $\ell := \lfloor b\theta \rfloor$ and 
$a_m := \theta^m/m!$. 
Note that $\ell \le b\theta  \le \theta-1$ by the assumption. 
Since $a_{m}/a_{m-1} = \theta/m$,
we have 
$a_m \le a_\ell $ for all $m \le \ell < \theta$ and, therefore,
\begin{align*}
\sum_{m=0}^{\ell} \frac{\theta^m}{m!} 
\le  (\ell+1) \frac{\theta^\ell}{\ell!}
\le  \theta \frac{\theta^\ell}{\ell!}.
\end{align*}
Using $m! \ge m^m e^{-m}$ ($m \ge 1$), we find
$
\log \frac{\theta^\ell}{\ell!}
\le  \ell \log \theta - \ell \log \ell + \ell.
$
Observing that $x \log \theta - x\log x + x$ is an increasing function
in the region $0<x<\theta$,  we finally have
$$
\sum_{m=0}^{\ell} e^{-\theta} \frac{\theta^m}{m!} 
\le \theta e^{-\theta + b\theta \log \theta - b\theta \log (b\theta) + b\theta}
= \theta e^{- \theta(1 - b+b \log b)},
$$
as claimed.
\end{proof}
\begin{lemma}
\label{Lem:upsum}
If $B>1$ and $\theta >0$,
then
\begin{align*}
\sum_{k=\lceil B\theta \rceil}^{\infty} e^{-\theta } \frac{\theta ^k}{k!} &\le 
\frac{B}{B-1} e^{-\theta  (1- B+B \log B )}.
\end{align*}
\end{lemma}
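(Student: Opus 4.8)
The plan is to bound the upper tail of the Poisson distribution by the same Chernoff-type argument that produced Lemma~\ref{Lem:lowsum}, but now summing a geometrically decaying series rather than dominating by the largest term. Write $\ell := \lceil B\theta \rceil$ and $a_k := \theta^k/k!$. Since $a_{k}/a_{k-1} = \theta/k \le \theta/(B\theta) = 1/B < 1$ for all $k \ge \ell$, the terms $a_k$ with $k \ge \ell$ are dominated by a geometric series with ratio $1/B$: namely $a_k \le a_\ell B^{-(k-\ell)}$. Summing gives
\begin{align*}
\sum_{k=\ell}^\infty \frac{\theta^k}{k!} \le a_\ell \sum_{j=0}^\infty B^{-j} = \frac{B}{B-1}\,\frac{\theta^\ell}{\ell!}.
\end{align*}

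Next I would estimate $\theta^\ell/\ell!$. Using $\ell! \ge \ell^\ell e^{-\ell}$ (valid for $\ell \ge 1$, which holds since $B>1$, $\theta>0$ force $\ell \ge 1$), we get $\log(\theta^\ell/\ell!) \le \ell\log\theta - \ell\log\ell + \ell$. The function $x\mapsto x\log\theta - x\log x + x$ has derivative $\log\theta - \log x = \log(\theta/x)$, which is negative for $x > \theta$; since $\ell = \lceil B\theta\rceil \ge B\theta > \theta$, this function is decreasing on $[\theta,\infty)$ through the relevant range, so we may replace $\ell$ by the smaller value $B\theta$ to get an upper bound:
\begin{align*}
\log\frac{\theta^\ell}{\ell!} \le B\theta\log\theta - B\theta\log(B\theta) + B\theta = B\theta - B\theta\log B = -\theta(B\log B - B).
\end{align*}
Combining with the geometric-series bound and multiplying by $e^{-\theta}$ yields
\begin{align*}
\sum_{k=\ell}^\infty e^{-\theta}\frac{\theta^k}{k!} \le \frac{B}{B-1}\, e^{-\theta(1 - B + B\log B)},
\end{align*}
which is the claimed inequality.

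There is one subtlety to check carefully rather than gloss over: the monotonicity step needs $\ell > \theta$ so that the whole interval $[\theta,\ell]$ lies where $x\log\theta - x\log x + x$ is decreasing, which is exactly guaranteed by $B>1$ and $\ell = \lceil B\theta\rceil \ge B\theta$. One should also confirm that the exponent $1 - B + B\log B$ is positive for $B>1$ (it vanishes at $B=1$ with positive derivative $\log B$), so the bound is genuinely a decaying exponential in $\theta$, matching in spirit the exponent $1 - b + b\log b$ of Lemma~\ref{Lem:lowsum}. I expect no real obstacle here; the only place demanding attention is the geometric-domination step, where one must verify that the ratio bound $a_k/a_{k-1}\le 1/B$ holds for every $k\ge\ell$ (it does, since $k\ge\ell\ge B\theta$), so that the telescoped bound $a_k \le a_\ell B^{-(k-\ell)}$ is valid term by term.
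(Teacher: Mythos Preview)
Your proof is correct and follows essentially the same approach as the paper's: bound the tail by a geometric series to extract the factor $B/(B-1)$, then apply the crude Stirling bound $\ell!\ge \ell^\ell e^{-\ell}$ and the monotonicity of $x\log\theta - x\log x + x$ on $(\theta,\infty)$ to replace $\ell$ by $B\theta$. The only cosmetic difference is that the paper obtains the geometric ratio via $(m+k)!\ge m!\,m^k$ (giving ratio $\theta/m\le 1/B$) while you use $a_k/a_{k-1}=\theta/k\le 1/B$ directly; these are equivalent.
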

\begin{proof}
Let $m := \lceil B\theta  \rceil$. Since $(m+k)! \ge m! m^k$ and $\theta /m\le 1/B<1$, we have
\begin{align*}
\sum_{k=m}^\infty \frac{\theta ^k}{k!}
\le \frac{\theta ^m}{m!} \sum_{k=0}^\infty \left ( \frac{\theta }{m} \right )^k
= \frac{\theta ^m}{m!} \frac{1}{1-(\theta /m)} 
\le \frac{B}{B-1} e^{m \log \theta  - m \log m + m}
\le \frac{B}{B-1} e^{B\theta  - B\theta \log  B},
\end{align*}
where we have used $m! \ge m^m e^{-m}$ and that
$x \log \theta  - x\log x + x$ is a decreasing function
in the region $\theta <x$.
Multiplying by $e^{-\theta }$, we get the desired inequality.
\end{proof}

\noindent{\bf Definition.}
By $(\X_t)_{t\ge 0}$, we mean a classical
Galton-Watson process with Poisson
offspring number distribution with mean $\theta$, starting in generation 0
with a single individual. 

\begin{remark}
\label{Rem:nPoi}
Conditioned on $\X_{t-1} = m$ for a nonnegative integer $m$,
$\X_t$ is a Poisson-distributed random variable with mean $m \theta$.
\end{remark}
\begin{lemma}
\label{Lem:GWlow}
If $0<b<1$, $\theta\ge f \ge 1/(1-b+b\log b)$, and $x \ge 1$, then,
\begin{align}
\label{Eq:nlow}
\P(\X_t \ge  bx f\vert \X_{t-1}\ge x) \ge 1-xf e^{- x f(1-b+ b \log b)}.
\end{align}
\end{lemma}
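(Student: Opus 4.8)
The plan is to reduce the conditional estimate to a lower-tail bound for a single Poisson random variable and then invoke Lemma~\ref{Lem:lowsum}. Since $\X_{t-1}$ is integer-valued and $x\ge1$, conditioning on $\{\X_{t-1}\ge x\}$ is the same as conditioning on $\{\X_{t-1}\ge\lceil x\rceil\}$; and by Remark~\ref{Rem:nPoi}, given $\X_{t-1}=m$ the variable $\X_t$ is Poisson with mean $m\theta$. For $m\ge\lceil x\rceil\ge x$ we have $m\theta\ge x\theta\ge xf$ because $\theta\ge f$, and since $\lambda\mapsto\P(\mathrm{Poisson}(\lambda)\ge a)$ is non-decreasing, every term $\P(\X_t\ge bxf\mid\X_{t-1}=m)$ with $m\ge\lceil x\rceil$ is at least $\P(Y\ge bxf)$ for $Y\sim\mathrm{Poisson}(xf)$. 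Averaging over $m$ gives $\P(\X_t\ge bxf\mid\X_{t-1}\ge x)\ge\P(Y\ge bxf)$, so it suffices to prove $\P(Y<bxf)\le xf\,e^{-xf(1-b+b\log b)}$.

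To do this, write $\psi:=xf$ and $c:=1-b+b\log b$. Differentiating $b\mapsto 1-b+b\log b$ on $(0,1)$ shows $0<c<1$ and $c\le1-b$; hence the hypothesis $f\ge1/c$ forces $\psi\ge f>1$ and $(1-b)\psi\ge(1-b)/c\ge1$. If $\lfloor b\psi\rfloor\ge1$, then Lemma~\ref{Lem:lowsum}, applied with its parameters $\theta$ and $b$ replaced by $\psi$ and $b$, gives $\P(Y\le\lfloor b\psi\rfloor)\le\psi e^{-\psi c}$, and since $\{Y<b\psi\}\subseteq\{Y\le\lfloor b\psi\rfloor\}$ this is exactly the bound we need. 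If instead $b\psi<1$, then $\{Y<b\psi\}=\{Y=0\}$, so $\P(Y<b\psi)=e^{-\psi}$, and the elementary inequality $e^{-\psi}\le\psi e^{-\psi c}$ --- equivalently $e^{-\psi b(1-\log b)}\le\psi$, valid because the left-hand side is below $1<\psi$ --- finishes the argument.

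I do not expect a genuine obstacle. The only points needing care are the monotonicity step allowing one to replace the random $\X_{t-1}$ by the single value $\lceil x\rceil$, and the check that $f\ge1/(1-b+b\log b)$ together with $x\ge1$ supplies the hypotheses $\psi>1$ and $(1-b)\psi\ge1$ of Lemma~\ref{Lem:lowsum}. The mildly annoying wrinkle is that this lower bound on $f$ does not by itself guarantee $bxf\ge1$, which is why the degenerate case $\lfloor b\psi\rfloor=0$ has to be split off and settled separately.
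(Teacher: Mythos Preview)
Your proof is correct and takes essentially the same route as the paper: both reduce via Remark~\ref{Rem:nPoi} to a Poisson lower-tail bound and invoke Lemma~\ref{Lem:lowsum}. The only cosmetic difference is that the paper applies Lemma~\ref{Lem:lowsum} at the mean $m\theta$ and then uses the monotonicity of $z\mapsto ze^{-zc}$ for $z\ge1/c$ to pass to $xf$, whereas you use Poisson stochastic ordering to work directly with a $\mathrm{Poisson}(xf)$ variable (and you also handle the edge case $\lfloor bxf\rfloor=0$ explicitly, which the paper glosses over).
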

\begin{proof}
By assumption, $(1-b)f\ge 1$.
If $m \ge x$, Remark~\ref{Rem:nPoi} with Lemma~\ref{Lem:lowsum} gives 
\begin{align*}
\P(\X_t < bxf\vert \X_{t-1} =m)
 \le 
\P(\X_t < bm\theta\vert \X_{t-1} =m)
\le \sum_{k=0}^{\lfloor  bm \theta\rfloor} e^{-m\theta} \frac{(m\theta)^k}{k!}
\le m\theta e^{- m \theta(1-b+b\log b)}.
\end{align*}
Since $z e^{-zc } \le y e^{-y c}$ for all $z\ge y\ge 1/c>0$ and $m\theta  \ge xf\ge 
1/(1-b+b\log b)>0$,
we get
$$
\P(\X_t < bxf\vert \X_{t-1} =m) \le xf e^{-xf(1-b+b\log b)},
$$
which does not depend on $m$ as long as $m\ge x$. 
Now, the proof is completed. 
\end{proof}
\begin{lemma}
\label{Lem:Markov}
Let $A_t := \{ a_t \le \X_t \le b_t \}$, where
$0\le a_t \le b_t-1 \le \infty$ for all $t\ge 0$.
Let $E_{t} := \bigcap_{k=\tau}^t A_k$ for $0 \le \tau < t$. 
Assume $\P(A_\tau) > 0$ and $\P(A_t \vert \X_{t-1}=m ) \ge f_{t}>0$, where 
$m$ is any integer satisfying $a_{t-1} \le m \le b_{t-1}$ and
$f_t$ depends on $a_t$,
$b_t$, $a_{t-1}$, and $b_{t-1}$ but not on $m$.
Then
$$
\P\left ( E_t \right )
\ge \P(A_\tau)\prod_{k=\tau+1}^t f_k .
$$
\end{lemma}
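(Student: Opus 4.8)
The plan is a short induction on $t$, powered by the Markov property of the Galton--Watson process $(\X_t)$. Recall from Remark~\ref{Rem:nPoi} that the conditional law of $\X_t$ given the entire past depends on that past only through $\X_{t-1}$; in particular $\P(A_t\mid \X_\tau,\dots,\X_{t-1})=\P(A_t\mid\X_{t-1})$ for every $t>\tau$. I will also use throughout that each $\X_k$ is $\N_0$-valued, so that on the event $A_k$ the quantity $\X_k$ is an \emph{integer} lying in $[a_k,b_k]$ --- precisely the setting in which the hypothesis $\P(A_t\mid\X_{t-1}=m)\ge f_t$ applies.

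For the base case $t=\tau+1$ I would condition on $\X_\tau$: since $A_\tau\in\sigma(\X_\tau)$,
\[
\P(E_{\tau+1})=\P(A_\tau\cap A_{\tau+1})=\E\big[\mathbf 1_{A_\tau}\,\P(A_{\tau+1}\mid\X_\tau)\big]\ge f_{\tau+1}\,\P(A_\tau),
\]
the inequality using that on $A_\tau$ the value $\X_\tau$ is an admissible integer $m\in[a_\tau,b_\tau]$. For the inductive step, assuming $\P(E_{t-1})\ge\P(A_\tau)\prod_{k=\tau+1}^{t-1}f_k$, I would write $E_t=E_{t-1}\cap A_t$, observe that $E_{t-1}\in\sigma(\X_\tau,\dots,\X_{t-1})$, and condition on this $\sigma$-algebra; the Markov property collapses the conditioning to $\X_{t-1}$, and since $E_{t-1}\subseteq A_{t-1}$ forces $\X_{t-1}$ to be an admissible integer, $\P(A_t\mid\X_{t-1})\ge f_t$ on $E_{t-1}$. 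Hence $\P(E_t)=\E\big[\mathbf 1_{E_{t-1}}\,\P(A_t\mid\X_{t-1})\big]\ge f_t\,\P(E_{t-1})$, and iterating the recursion gives $\P(E_t)\ge\P(A_\tau)\prod_{k=\tau+1}^t f_k$.

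I do not anticipate a real obstacle. The only point needing care is the \emph{pointwise} bound $\P(A_t\mid\X_{t-1})\ge f_t$ on $E_{t-1}$: this is exactly where the assumed uniformity of $f_t$ over all admissible integers $m$ is indispensable, since without it the conditional probability could not be bounded below over the whole event $E_{t-1}$. The boundary conventions ($a_t$ possibly $0$, $b_t$ possibly $\infty$, so that $A_t$ may be a half-line) cause no difficulty, because the hypothesis is posited for every admissible $m$.
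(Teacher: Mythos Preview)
Your proof is correct and follows essentially the same route as the paper: establish the recursion $\P(E_t)\ge f_t\,\P(E_{t-1})$ via the Markov property and the uniform lower bound on $\P(A_t\mid\X_{t-1}=m)$ for admissible $m$, then iterate. The only cosmetic difference is that the paper carries out the conditioning by an explicit countable decomposition over $\{\X_{t-1}=m\}$, whereas you phrase it via $\E[\mathbf 1_{E_{t-1}}\,\P(A_t\mid\X_{t-1})]$; the content is identical.
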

\begin{proof}
For $t=\tau+1$, the proof is trivial. So we assume $t\ge \tau+2$.
Note that
$$
E_t = A_t \cap A_{t-1} \cap E_{t-2} 
= \bigcup_{m=\lceil a_{t-1} \rceil}^{\lfloor b_{t-1} \rfloor} \left (A_t\cap  \{\X_{t-1} = m \} \cap E_{t-2} \right ).  
$$
Using the countable additivity of the probability measure and the Markov property of $\X_t$, 
\begin{align*}
\P(E_t) 
&= \sum_{m=\lceil a_{t-1}\rceil}^{\lfloor b_{t-1} \rfloor}
\P(A_t \vert \X_{t-1}=m) \P\left (  \{\X_{t-1} = m \} \cap E_{t-2} \right )\\
&\ge f_t \sum_{m=\lceil a_{t-1}\rceil}^{\lfloor b_{t-1} \rfloor}
\P\left (\{\X_{t-1} = m \} \cap E_{t-2} \right )
= f_t \P(E_{t-1}).
\end{align*}
Iterating the above inequality, we get the desired inequality.
\end{proof}
\begin{lemma}
\label{Lem:nPoi}
If $0<b<1$, $\theta\ge f \ge 1/(1-b+b\log b)$, and $bf > 1$, then
\begin{align*}
\P\left (\X_t \ge b^t f^t \text{ for all } t \ge \tau \vert \X_\tau \ge b^\tau f^\tau \right )
\ge 1 - f \big( 1 + \tfrac{1}{\log(bf)}\big) e^{-f (1-b+b\log b)},
\end{align*}
for any nonnegative integer $\tau$.
Note that the right hand side 
does not depend on $\tau$.
\end{lemma}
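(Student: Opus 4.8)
The plan is to iterate the one-step lower bound of Lemma~\ref{Lem:GWlow} along the deterministic geometric trajectory $x_t:=(bf)^t$, and to control by a union bound over generations the first time the process falls below it. Set $c:=1-b+b\log b$; since $b\mapsto 1-b+b\log b$ vanishes at $b=1$ and has derivative $\log b<0$ on $(0,1)$, we have $c>0$. Write $\rho:=bf$, which exceeds $1$ by hypothesis, so that $x_t=\rho^t\ge 1$ for all $t\ge 0$, and note the self-similarity $x_t=b\,x_{t-1}\,f$.

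First I would record the per-step estimate. Applying Lemma~\ref{Lem:GWlow} with $x=x_{t-1}$ (which is $\ge 1$, while $\theta\ge f\ge 1/c$ by hypothesis) --- and using that its proof in fact bounds $\P(\X_t<x_t\mid \X_{t-1}=m)$ by $x_{t-1}f\,e^{-x_{t-1}fc}$ uniformly over all integers $m\ge x_{t-1}$ --- yields that same bound conditionally on any history in which $\X_{t-1}\ge x_{t-1}$. I would then decompose the complementary event according to the first generation $t>\tau$ at which $\X_t<x_t$ while $\X_s\ge x_s$ for $\tau\le s<t$; conditioning on $\X_{t-1}=m$ (necessarily $m\ge x_{t-1}$ on that event) and using the Markov property of $(\X_t)_{t\ge 0}$ in the spirit of Lemma~\ref{Lem:Markov}, this gives
$$1-\P\big(\X_t\ge x_t\text{ for all }t\ge\tau\mid\X_\tau\ge x_\tau\big)\ \le\ \sum_{t=\tau+1}^{\infty}x_{t-1}f\,e^{-x_{t-1}fc}\ =\ \sum_{s=\tau}^{\infty}\phi(s),\qquad \phi(s):=\rho^s f\,e^{-\rho^s f c}.$$
Since $\rho^s f\ge f\ge 1/c$ for every $s\ge 0$ and $w\mapsto we^{-wc}$ is nonincreasing on $[1/c,\infty)$, the summand $\phi$ is nonincreasing, so $\sum_{s=\tau}^{\infty}\phi(s)\le\phi(0)+\int_0^{\infty}\phi(s)\,ds$. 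Here $\phi(0)=fe^{-fc}$, and the substitution $u=\rho^s f$ gives $\int_0^{\infty}\phi(s)\,ds=(c\log\rho)^{-1}e^{-fc}\le f(\log\rho)^{-1}e^{-fc}$, the last inequality being $f\ge 1/c$ once more. Summing the two contributions produces exactly $f\big(1+1/\log(bf)\big)e^{-f(1-b+b\log b)}$, which is independent of $\tau$ as claimed.

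The only genuinely delicate point is legitimizing the union bound over infinitely many generations: one must invoke Lemma~\ref{Lem:GWlow} conditionally on the entire history up to generation $t-1$, which is permissible precisely because that lemma's estimate is uniform in the value $m$ of $\X_{t-1}$, so that the Markov property applies exactly as packaged in Lemma~\ref{Lem:Markov}. Once that is in place, the geometric growth of $x_{t-1}=(bf)^{t-1}$ makes $x_{t-1}fe^{-x_{t-1}fc}$ decay super-exponentially in $t$, so the series is dominated by its first term together with a small integral correction; the remaining ingredients are just the elementary monotonicity of $w\mapsto we^{-wc}$ on $[1/c,\infty)$ and a one-line integral, each of which only uses the standing hypotheses $f\ge 1/(1-b+b\log b)$ and $bf>1$.
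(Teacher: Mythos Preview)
Your argument is correct and follows essentially the same route as the paper: invoke the uniform one-step bound of Lemma~\ref{Lem:GWlow} along the trajectory $x_t=(bf)^t$, chain via the Markov property (the paper packages this through Lemma~\ref{Lem:Markov} and the inequality $\prod(1-d_t)\ge 1-\sum d_t$, you via a first-failure union bound), and then control the resulting series $\sum_{s\ge 0}\rho^s f\,e^{-\rho^s f c}$ by monotonicity of $w\mapsto we^{-wc}$ on $[1/c,\infty)$ together with the substitution $u=\rho^s f$ in the integral tail. The only cosmetic difference is that the paper sums from $t=1$ while you sum from $s=\tau$ and then enlarge to $s\ge 0$; both yield the same $\tau$-independent bound.
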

\begin{proof}
For any event $E$, we write
$
\P_c(E):= \P(E\vert \X_\tau \ge b^\tau f^\tau)
$
in this proof.
Define 
$$
A_t := \{ \X_t \ge b^t f^t\},\quad
C_t := \bigcap_{k=\tau+1}^t A_k,\quad 
C := \bigcap_{k=\tau+1}^\infty A_k.
$$
Note that 
$$
\P_c(A_\tau)=1,\quad
\P_c\left (\X_t \ge b^t f^t \text{ for all } t \ge \tau \right )=\P_c(C)
=\lim_{t\rightarrow\infty} \P_c(C_t)  .$$
Using \eqref{Eq:nlow} with $ x \mapsto (bf)^{t-1}$, we have
\begin{align*}
\P(A_t \vert  A_{t-1} ) \ge 1 -  
f (bf)^{t-1} \exp \left [ - f (bf)^{t-1}(1-b+b\log b) \right ]
=: 1-d_t.
\end{align*}
By Lemma~\ref{Lem:Markov}  we can write
$$
\P_c(C)=\lim_{t\rightarrow\infty} \P_c(C_t) 
\ge \prod_{t=\tau+1}^\infty (1-d_t)
\ge 1 - \sum_{t=\tau+1}^\infty d_t
\ge 1 -  \sum_{t=1}^\infty d_t.
$$
Since 
$(c^{t-1} \exp(-a c^{t-1}))_{t\ge 1}$ is a decreasing 
sequence for $a \ge 1$ and $c>1$, we have
\begin{align}
\nonumber
&\sum_{t=1}^\infty c^{t-1} \exp \left (-a c^{t-1}\right ) 
=  e^{-a} 
+\sum_{t=2}^\infty c^{t-1}\exp \left (-a c^{t-1}\right ) \\
&\le e^{-a} + \int_1^\infty c^{t-1} \exp \left (-a c^{t-1}\right ) dt
= \left ( 1 + \frac{1}{a \log c} \right )e^{-a}
\le \left ( 1 + \frac{1}{\log c} \right )e^{-a}.
\label{Eq:sumxt}
\end{align}
Plugging $c=bf$ and $a=f(1-b+b\log b)$ into \eqref{Eq:sumxt}, we have the desired result.
\end{proof}
\begin{remark}
\label{Rem:bc}
If we restrict the condition of parameters in Lemma~\ref{Lem:nPoi}
to be $bf\ge e$ and $0<b\le b_c<1/2$, where 
$b_c$ satisfies $1-b_c+b_c \log b_c = 1/2$, 
then we can use
\begin{align}
\label{Eq:Plow}
\P\left (\X_t \ge b^t f^t \text{ for all } t \ge \tau \vert \X_\tau \ge b^\tau f^\tau \right )
\ge 1 - 2f e^{-f /2}.
\end{align}
\end{remark}
\begin{lemma}
\label{Lem:GWup}
If $B>1$, $f\ge \theta>0$, and $x \ge 1$, then
\begin{align*}
\P(\X_t \le  Bxf\vert \X_{t-1}\le x) 
\ge 1-\frac{B}{B-1} e^{ - xf B(\log B - 1) }.
\end{align*}
\end{lemma}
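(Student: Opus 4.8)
The plan is to mirror the structure of the proof of Lemma~\ref{Lem:GWlow}, but using the upper-tail estimate of Lemma~\ref{Lem:upsum} in place of the lower-tail estimate of Lemma~\ref{Lem:lowsum}. First I would reduce to the case $\X_{t-1}=m$ for an integer $m$ with $1\le m\le x$: by Remark~\ref{Rem:nPoi}, conditioned on $\X_{t-1}=m$, the random variable $\X_t$ is Poisson with mean $m\theta$. Since $m\le x$ and (crucially) the event $\{\X_t\le Bxf\}$ becomes \emph{more} likely as $x$ grows, I would like to bound $\P(\X_t>Bxf\mid\X_{t-1}=m)$ uniformly in $m\le x$. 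The natural move is to write, with $\theta\le f$ so that $m\theta\le mf\le xf$,
\begin{align*}
\P(\X_t > Bxf\mid \X_{t-1}=m)=\sum_{k=\lceil Bxf\rceil}^\infty e^{-m\theta}\frac{(m\theta)^k}{k!}
\le \sum_{k=\lceil Bxf\rceil}^\infty e^{-m\theta}\frac{(xf)^k}{k!},
\end{align*}
where in the last step I compare the Poisson$(m\theta)$ upper tail at level $Bxf$ with the Poisson$(xf)$ upper tail at the same level; since $k\ge Bxf>xf\ge m\theta$, each summand is increasing in the mean over the relevant range, so this inequality holds. The $e^{-m\theta}$ prefactor only helps and can be bounded by $1$.

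Next I would apply Lemma~\ref{Lem:upsum} with $\theta$ there replaced by $xf$ and $B$ there equal to the present $B$ — one checks $\lceil B\cdot(xf)\rceil$ is exactly the lower summation index, and $B>1$, $xf>0$ are the required hypotheses. This yields
\begin{align*}
\sum_{k=\lceil Bxf\rceil}^\infty e^{-xf}\frac{(xf)^k}{k!}\le \frac{B}{B-1}\,e^{-xf(1-B+B\log B)},
\end{align*}
and after removing the spurious $e^{-xf}$ (i.e. using the bound without it, which is larger, so still valid) one gets $\P(\X_t>Bxf\mid\X_{t-1}=m)\le \frac{B}{B-1}e^{xf-xf(1-B+B\log B)}=\frac{B}{B-1}e^{-xfB(\log B-1)}$. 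Taking complements and noting the bound is independent of $m$ for all $m\le x$ finishes the conditional estimate, and then summing over $m\le x$ against the conditional law of $\X_{t-1}$ (exactly as in Lemma~\ref{Lem:GWlow}) gives the stated inequality.

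The one point that needs care — and the main obstacle — is the monotonicity-in-the-mean comparison that lets me replace $m\theta$ by $xf$ in the Poisson tail: I must make sure that the upper tail $\sum_{k\ge N}e^{-\lambda}\lambda^k/k!$ is monotone increasing in $\lambda$ on the range $\lambda\le N$, and that after multiplying by $e^{-m\theta}\le 1$ the substitution is genuinely an upper bound (the cleanest way is to keep $e^{-xf}$ in place, since $e^{-m\theta}\le e^{-xf}$ fails in general — rather, one should bound $e^{-m\theta}\le 1$ first, then invoke Lemma~\ref{Lem:upsum} in the form with the $e^{\theta}$ absorbed, as is already done inside that lemma's proof). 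Organising the algebra of these exponential prefactors so the constant comes out as exactly $\frac{B}{B-1}e^{-xfB(\log B-1)}$ is routine but must be done consistently; everything else follows the template of the preceding three lemmas verbatim.
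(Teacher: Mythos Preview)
Your argument is correct once the bookkeeping in the last paragraph is carried out, but it takes a somewhat different route from the paper. You bound termwise, replacing $e^{-m\theta}(m\theta)^k$ by $(xf)^k$ via $e^{-m\theta}\le 1$ and $m\theta\le xf$, and then invoke the intermediate inequality from inside the proof of Lemma~\ref{Lem:upsum}, namely $\sum_{k\ge\lceil Bxf\rceil}(xf)^k/k!\le\frac{B}{B-1}e^{Bxf(1-\log B)}$. The paper instead keeps the true Poisson parameter $m\theta$ and rescales the threshold: setting $B':=Bxf/(m\theta)\ge B$, it applies Lemma~\ref{Lem:upsum} as a black box with parameters $(m\theta,B')$, drops the factor $e^{-m\theta}$, and then uses $m\theta B'=xfB$ together with the monotonicity of $y\mapsto y/(y-1)$ and of $\log$ to pass from $B'$ back to $B$. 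Your route avoids the auxiliary $B'$ at the cost of quoting a line from inside the proof of Lemma~\ref{Lem:upsum} rather than its statement; the paper's route uses the lemma cleanly but needs the extra monotonicity observations.

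Two minor tidy-ups. First, the case $\X_{t-1}=0$ should be mentioned; it is trivial since then $\X_t=0$. Second, your first displayed inequality keeps $e^{-m\theta}$ on the right while your justification (``each summand is increasing in the mean'') would actually produce $e^{-xf}$ there; the version you wrote is still true for the simpler reason $(m\theta)^k\le(xf)^k$. Had you kept $e^{-xf}$ via genuine stochastic monotonicity of Poisson tails, Lemma~\ref{Lem:upsum} would apply directly and give the sharper bound $\frac{B}{B-1}e^{-xf(1-B+B\log B)}$, from which the stated one follows by $e^{-xf}\le 1$; there is then no ``spurious'' factor to remove.
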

\begin{proof}
Set $\X_{t-1}=m$. If $m=0$, the above inequality is trivially true.
So we only consider $1 \le m \le x$.
Let $B' := B xf/(m\theta)\ge B$. Then, Remark~\ref{Rem:nPoi} 
together with Lemma~\ref{Lem:upsum}
gives
\begin{align*}
\P(\X_t >  Bx f\vert \X_{t-1}=m) 
= \P(\X_t > B'm\theta \vert \X_{t-1}=m)
\le \sum_{k=\lceil B'm\theta \rceil}^{\infty} \frac{(m\theta)^k}{k!}
\le \frac{B'}{B'-1} e^{ - m\theta B' (\log B' - 1) },
\end{align*}
where we have used $e^{-y} \le 1$ for $y \ge 0$.
Since $xfB = m\theta B'$, $\log B'\ge \log B$, and $y/(y-1)$ is a decreasing function of $y>1$, 
we have
\begin{align*}
\P(\X_t >  Bx f\vert \X_{t-1}=m) 
\le \frac{B}{B-1} e^{ - xfB (\log B - 1) },
\end{align*}
which is valid for any $m \le x$.
Now the proof is completed.
\end{proof}
\begin{remark}
In case $B\ge e^2>2$, we can use 
\begin{align}
\P(\X_t \le  Bxf\vert \X_{t-1}\le x) 
\ge 1-2 e^{-xf}.
\label{Eq:Pup}
\end{align}
\end{remark}

We next describe the distribution of $W_t$, conditioned on $\Xi(t) = N$.
\begin{lemma}
\label{Th:Wa}
For any $x \geq 0$,
\begin{align*}
\P(W_{t} \leq x \vert  \Xi(t) = N)
=\left (1-\beta G(x) \right )^{N}.
\end{align*}
\end{lemma}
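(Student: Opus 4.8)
The plan is to unwind the definition of $W_t$ in terms of the independent mutation events attached to the $\Xi(t)$ offspring of generation $t-1$, and then evaluate a product of i.i.d.\ factors.

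First I would fix an integer $N\ge 0$ and work on the event $\{\Xi(t)=N\}$. By construction of the model, once the number of offspring is known to be $N$, these $N$ offspring behave independently of one another and of everything used to produce $\Xi(t)$ (the fitnesses present in generation $t-1$ and the Poissonization): to offspring $i\le N$ we may attach an independent pair $(M_i,V_i)$, where $M_i\sim\mathrm{Bernoulli}(\beta)$ indicates whether offspring $i$ is a mutant and $V_i\sim\mu$ is its (potential) mutant fitness, with $M_i$ and $V_i$ independent. Hence, conditioning on $\{\Xi(t)=N\}$ leaves the family $(M_i,V_i)_{i\le N}$ i.i.d. With the stated convention $\max\emptyset:=0$ we have $W_t=\max\{V_i:1\le i\le N,\ M_i=1\}$, so that for every $x\ge 0$
$$
\{W_t\le x\}\cap\{\Xi(t)=N\}
=\Big(\bigcap_{i=1}^{N}\big(\{M_i=0\}\cup\{V_i\le x\}\big)\Big)\cap\{\Xi(t)=N\}.
$$

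Then I would compute, using that the pairs $(M_i,V_i)$ are i.i.d.\ and independent of $\Xi(t)$, that
$$
\P\big(W_t\le x\,\big|\,\Xi(t)=N\big)
=\prod_{i=1}^{N}\Big(1-\P\big(M_i=1,\ V_i>x\big)\Big)
=\big(1-\beta G(x)\big)^{N},
$$
since the complement of $\{M_i=0\}\cup\{V_i\le x\}$ is $\{M_i=1\}\cap\{V_i>x\}$ and $\P(M_i=1,\ V_i>x)=\beta\,\mu((x,\infty))=\beta G(x)$. The empty product ($N=0$) gives $1$, consistent with $W_t=0\le x$ when there is no offspring, and the configurations in which some offspring are non-mutants are automatically included because $\{M_i=0\}$ already makes the $i$-th factor event occur.

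There is no genuine analytic difficulty; the only point that needs care is the probabilistic bookkeeping — namely that the mutation/resampling randomness of the offspring born into generation $t$ is independent of the mechanism generating $\Xi(t)$, so that conditioning on $\{\Xi(t)=N\}$ really does produce $N$ i.i.d.\ labelled offspring — together with checking that the convention $W_t=0$ in the absence of a mutant is compatible with the displayed event identity, which it is since $x\ge 0$. This is essentially the whole content of the lemma.
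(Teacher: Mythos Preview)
Your proof is correct. The paper's own proof takes a slightly different decomposition: it conditions on the number $m$ of mutants among the $N$ offspring, uses $\P(W_t^{(m)}\le x)=(1-G(x))^m$, and then sums over the binomial weights $q_m=\binom{N}{m}\beta^m(1-\beta)^{N-m}$ to recover $(1-\beta G(x))^N$ via the binomial theorem. You instead work at the level of individual offspring, writing $\{W_t\le x\}$ as an intersection of $N$ i.i.d.\ events and taking the product directly. Both routes rest on the same independence structure; yours is a bit more streamlined since it bypasses the intermediate conditioning on $m$ and the binomial sum, while the paper's version makes the role of the largest-of-$m$ fitness statistic $W_t^{(m)}$ explicit.
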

\begin{proof}
First fix a positive integer $m$ and by $W_t^{_{(m)}}$ is denoted the largest of $m$ independently
sampled fitnesses with convention  $W^{_{(0)}}_t=0$. 
Then
$\P(W^{_{(m)}}_{t} \leq  x) = (1-G(x))^m.$
Let $q_m$ be the probability that $m$ mutants arise out of $N$.
Then,
\begin{align*}
\P(W_t\le x\vert \Xi(t) = N)
& = \sum_{m=0}^N \P(W_t^{_{(m)}}\le x)q_m
 = \sum_{m=0}^N (1- G(x))^m  q_m \\ & = 
\sum_{m=0}^N (1- G(x))^m
\binom{N}{m} \beta^m (1-\beta)^{N-m}
= (1-\beta G(x))^{N},
\end{align*}
as claimed.
\end{proof}
\begin{remark}
In case $X(t) \le \Xi(t)\le y$, we will use the inequality 
\begin{align}
\label{Eq:Wup}
\P(W_t \le x \vert  \Xi(t) \le y) \ge 1 - \beta y G(x),
\end{align}
where we have used $(1-z)^m \ge 1 - mz$ for $0\le z\le 1$ and $m \ge 1$.
In case $\Xi(t)\ge X(t) \ge y\ge 0$, we will use the inequality 
\begin{align}
\label{Eq:Wdown}
\P(W_t \ge x \vert  X(t) \ge y) \ge 1-e^{-\beta yG(x)},
\end{align}
where we have used $e^{-yz} \ge (1- z)^y$ for $0\le z \le 1$.
\end{remark}
\section{\label{Sec:XW} Proof of Theorem~\ref{Th:mainthm}}

We first provide a heuristic argument for a more accurate estimate
of $W_t$ than in Section~\ref{Sec:sim}.
As in Theorem~\ref{Th:mainthm} we assume
$$
\log X(t) \approx \frac{t}{\alpha} \logn t .
$$ 
Then, we approximate
$$
\logn{X(t)} \approx \lo{n-1}t \left ( \frac{\log t}{\alpha} \right )^{\delta_{n,1}}.
$$
Now using the mean-field type approximation
$ g_I(W_t) \approx \logn {1/G(W_t)} \approx \logn{X(t)}$, 
we get an approximate $W_t$ by a solution $x$ of the equation
$$
 x^\alpha L(x)= g_I(x) = 
\lo{n-1}t \left (\frac{\log t}{\alpha}\right )^{\delta_{n,1}}.
$$
We can find an approximate solution of the above equation as
\begin{align*}
x &= L(x)^{-1/\alpha} 
\left ( \lo{n-1} t \right )^{1/\alpha} \left (\frac{\log t}{\alpha}\right )^{\delta_{n,1}/\alpha} 
\\
&\approx
\big( \lo{n-1} t \big)^{1/\alpha} \left (\frac{\log t}{\alpha}\right )^{\delta_{n,1}/\alpha} 
\left [ L \big( ( \lo{n-1} t )^{1/\alpha} \big)\right ]^{-1/\alpha} = u_n(t).
%
\end{align*}
where we have used {\bf (A2)}. 
By construction, we have
\begin{align}
g_I(u_n(t))\sim \lo{n-1}t \left (\frac{\log t}{\alpha}\right )^{\delta_{n,1}},
\label{Eq:gunt}
\end{align}
which will play an important role in proving Theorem~\ref{Th:mainthm}.
In the proof, no distinction between \mm and \fm is necessary.
For the proof, we begin with estimating $G(W_t)$ using an inequality relating the iterated exponential function $\en{n}{x}$ and 
the iterated logarithm $\logn{x}$.

\begin{lemma}
\label{Lem:expnd}
For any positive integer $n$ and for any positive $x$, 
\begin{align}
\en{n}{x \logn{t}} \ge t^{x},
\label{Eq:enln}
\end{align}
as long as $\logn{t}>1$.
\end{lemma}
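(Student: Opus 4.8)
The plan is to argue by induction on $n$, peeling off one exponential at a time. For the base case $n=1$, the claim is that $\exp(x\log t)\ge t^x$ whenever $\log t>1$; but in fact $\exp(x\log t)=t^x$ exactly, so the inequality holds trivially (with equality) for all positive $x$ and all $t>1$. The condition $\log t>1$ is only needed to run the inductive step cleanly.

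For the inductive step, suppose the statement holds for some $n\ge 1$, i.e.\ $\en{n}{x\logn{s}}\ge s^x$ whenever $\logn{s}>1$, for every positive $x$ and every admissible $s$. I want to deduce $\en{n+1}{x\lo{n+1}{t}}\ge t^x$ when $\lo{n+1}{t}>1$. Write $s:=\log t$, so that $\logn{s}=\lo{n+1}{t}>1$ and $s>e>1$, and in particular $s$ is admissible for the level-$n$ statement. Applying the induction hypothesis to $s$ gives $\en{n}{x\logn{s}}\ge s^x$. Now exponentiate once more: since $\exp$ is increasing,
\begin{align*}
\en{n+1}{x\lo{n+1}{t}}
=\exp\!\big(\en{n}{x\logn{s}}\big)
\ge \exp\!\big(s^x\big).
\end{align*}
It therefore remains to check that $\exp(s^x)\ge \exp(x\log s)=s^x$ — wait, that is not quite what is needed; rather I need $\exp(s^x)\ge t^x=\exp(x\log t)=\exp(xs)$, i.e.\ $s^x\ge xs$. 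Since $s=\log t>1$, we have $s^{x}\ge$ ... this requires a short elementary estimate: for $s>1$ and $x>0$, is $s^x\ge xs$? This fails for small $x$ (e.g.\ $x\to 0$ gives $s^x\to 1$ while $xs\to 0$, so actually it holds there), and for large $x$ it holds since $s^x$ grows exponentially in $x$ while $xs$ grows linearly; the worst case is intermediate and one checks $s^x\ge ex\log s\ge xs$ using $s^x=e^{x\log s}\ge ex\log s$ (from $e^u\ge eu$) together with $\log s$ versus $s/e$.

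The main obstacle is precisely this last elementary inequality linking $s^x$ and $xs$: it is where the hypothesis $\logn{t}>1$ (equivalently $s=\log t>1$ at the top level, and the analogous bound at each level) earns its keep, and one must be careful that the chain of admissibility conditions $\lo{k}{t}>1$ propagates correctly down the induction — in fact $\lo{n+1}{t}>1$ forces $\lo{k}{t}>1$ for all $1\le k\le n+1$, so a single hypothesis at the deepest level suffices. I expect the cleanest write-up to fold the two exponentiation steps together and verify $s^x\ge xs$ directly via $e^{x\log s}\ge 1+x\log s$ and casework on whether $x\log s\ge s$ or not, or simply by noting $s^x\ge\max(1,\,(\log s)^x)$ is not enough and instead using convexity of $x\mapsto s^x$ against its tangent line at a suitable point. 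None of this is deep; the lemma is essentially a bookkeeping statement about iterated exponentials, and the proof is a two-line induction once the scalar inequality is isolated.
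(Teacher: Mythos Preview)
Your induction is structurally sound and close to the paper's, but you peel off the iterate differently: you keep $x$ fixed and apply the hypothesis to $s=\log t$, whereas the paper keeps $t$ fixed and applies the hypothesis with a new exponent $y=(\lo{\ell}{t})^{x-1}$, writing $\en{\ell+1}{x\lo{\ell+1}{t}}=\en{\ell}{(\lo{\ell}{t})^{x}}=\en{\ell}{y\,\lo{\ell}{t}}\ge t^{y}$ and then checking $y\ge x$. Both routes land on the same scalar question, namely whether $a^{x-1}\ge x$ when $a>e$ (your $a$ is $s=\log t$, the paper's is $\lo{\ell}{t}$). For $x\ge 1$ this is a one-liner: $a>e$ gives $a^{x-1}\ge e^{x-1}\ge x$, the last step being $e^{u}\ge 1+u$ at $u=x-1$. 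That is essentially what the paper does.

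Your attempt to cover the full range $x>0$ does not go through, and cannot: from $e^{u}\ge eu$ you correctly get $s^{x}\ge ex\log s$, but the next step would need $e\log s\ge s$, which fails for every $s>e$ (indeed $\log s\le s/e$ always, with equality only at $s=e$). More to the point, the inequality $s^{x}\ge xs$ is simply false for $0<x<1$ once $s$ is large---take $s=100$, $x=\tfrac12$, giving $10<50$---and hence the lemma as stated fails there too: with $n=2$, $\log t=100$, $x=\tfrac12$ one has $\lo{2}{t}>1$ yet $\en{2}{x\lo{2}{t}}=e^{10}<e^{50}=t^{x}$. The paper's own argument shares this defect (the claim $y>e^{x-1}$ reverses when $x<1$, since then $a^{x-1}<e^{x-1}$ for $a>e$), but in the paper's proofs the lemma is invoked with exponent $1+\epsilon_2>1$ (see the proof of Lemma~\ref{Lem:k0I}), where no issue arises. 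For your write-up, restrict to $x\ge 1$ and give the clean one-line bound above.
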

\begin{proof}
For $n =1$, the inequality is trivially valid as an equality.
Now assume that the inequality is satisfied for $n=\ell$.
Consider $t$ with $\lo{\ell+1}{t} > 1$, which gives $\lo{\ell}{t} > e > 1$ and $t>1$.
Abbreviate $y := (\lo{\ell}{t})^{x-1}$ for $x>0$. Since $\lo{\ell}{t}>e$, we have $y>e^{x-1}>x$.
By assumption, we have
\begin{align*}
\en{\ell+1}{x\lo{\ell+1}{t}}
=\en{\ell}{\left (\lo{\ell}{t}\right )^{x}}
= \en{\ell}{y\lo{\ell}t} \ge t^{y} \ge t^{x},
\end{align*}
so that the claimed inequality is also valid for $n=\ell+1$.
Induction completes the proof.
\end{proof}
\begin{lemma}
\label{Lem:Gx}
Fix $\epsilon$ such that
$0< \epsilon < \frac{2\alpha}{3\alpha+8 + \sqrt{\alpha^2 + 48 \alpha + 64}},$
and let
$\epsilon_1 := \frac{4 \epsilon}{\alpha(1-2\epsilon)(1-\epsilon) - 4 \epsilon}$.
Then there is $\tau_0$ (depending on $n$) such that, for all $t \ge \tau_0$,
\begin{align*}
\log G((1-\epsilon_1)u_n(t)) \ge - \frac{1-2\epsilon}{\alpha}
t \logn t ,\\
\log G((1+\epsilon_1)u_n(t)) \le - \frac{1+2\epsilon}{\alpha}
t \logn t.
\end{align*}
\end{lemma}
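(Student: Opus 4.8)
The plan is to establish the two inequalities by carefully unwinding the asymptotic relation \eqref{Eq:gasym} for type~I tail functions, \eqref{Eq:Gexpn}, together with the defining property \eqref{Eq:gunt} of $u_n(t)$, namely $\Gi(u_n(t)) \sim \lo{n-1}t (\log t/\alpha)^{\delta_{n,1}}$. Since \eqref{Eq:Gexpn} reads $\logn{1/G(x)} \sim \Gi(x) = x^\alpha L(x)$, the statement $\log G((1-\epsilon_1)u_n(t)) \ge -\tfrac{1-2\epsilon}{\alpha}t\logn t$ is equivalent to $\logn{1/G((1-\epsilon_1)u_n(t))} \le \tfrac{1-2\epsilon}{\alpha} t \logn t$ after applying $\en{n}{\cdot}$ to both sides; and similarly for the upper bound on $G((1+\epsilon_1)u_n(t))$. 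So the first step is to reduce everything, via $\exp^{(n)}$, to comparing $\Gi((1\pm\epsilon_1)u_n(t))$ with a fraction of $\tfrac1\alpha t \logn t$, being careful about the direction of the inequalities under the (increasing) map $\en{n}{\cdot}$ and about the fact that \eqref{Eq:Gexpn} is only an asymptotic equivalence, so for large $t$ one loses a further factor $(1\pm\epsilon')$ for any prescribed $\epsilon'$.

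Second, I would compute $\Gi((1\pm\epsilon_1)u_n(t))$ in terms of $\Gi(u_n(t))$. Writing $\Gi((1\pm\epsilon_1)u_n(t)) = (1\pm\epsilon_1)^\alpha u_n(t)^\alpha L((1\pm\epsilon_1)u_n(t))$ and using assumption \textbf{(A2)} to absorb the slowly varying perturbation, $L((1\pm\epsilon_1)u_n(t)) \sim L(u_n(t))$, we get $\Gi((1\pm\epsilon_1)u_n(t)) \sim (1\pm\epsilon_1)^\alpha \Gi(u_n(t)) \sim (1\pm\epsilon_1)^\alpha \lo{n-1}t(\log t/\alpha)^{\delta_{n,1}}$. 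The crux is then the elementary but fiddly inequality relating $\lo{n-1}t(\log t/\alpha)^{\delta_{n,1}}$ after applying $\en{n}{\cdot}$ to a multiple of $t\logn t$: here Lemma~\ref{Lem:expnd} is exactly the tool, giving $\en{n}{x\logn t} \ge t^x$, which one applies with $x$ a suitable constant slightly above (resp.\ below) $\tfrac{1\pm 2\epsilon}{\alpha}$ divided out appropriately. One needs the reverse direction too for the $n\ge 2$ case, where $\logn{X}$-type quantities are compared — for $n\ge 2$ the Kronecker delta vanishes and the extra $\log t/\alpha$ factor is absent, which changes the bookkeeping, so I would treat $n=1$ and $n\ge 2$ as two cases (or handle the $\delta_{n,1}$ uniformly with care).

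Third, the role of the precise value of $\epsilon_1$ in terms of $\epsilon$ — the ungainly formula $\epsilon_1 = 4\epsilon/(\alpha(1-2\epsilon)(1-\epsilon)-4\epsilon)$ and the quadratic constraint on $\epsilon$ — is to guarantee that the "loss" accumulated through the three sources of slack (the $\sim$ in \eqref{Eq:Gexpn}, the $\sim$ from \textbf{(A2)}, and the factor $(1\pm\epsilon_1)^\alpha$) still leaves room inside the margin between $\tfrac{1}{\alpha}$ and $\tfrac{1\pm 2\epsilon}{\alpha}$. So the final step is a chain of inequalities: fix $\epsilon$ in the allowed range, choose an auxiliary $\epsilon' $ (say $\epsilon'=\epsilon$) controlling the two $\sim$'s, and verify algebraically that $(1-\epsilon')(1-\epsilon')(1-\epsilon_1)^\alpha \ge 1-2\epsilon$ and $(1+\epsilon')(1+\epsilon')(1+\epsilon_1)^\alpha \le 1+2\epsilon$ for $t$ large — using $(1-\epsilon_1)^\alpha \ge 1-\alpha\epsilon_1$ and $(1+\epsilon_1)^\alpha \le 1 + \alpha\epsilon_1 + O(\epsilon_1^2)$ or a cleaner bound — after which Lemma~\ref{Lem:expnd} and \textbf{(A4)} (to control derivatives of $\log L$ when turning asymptotic equivalences of $\Gi$ into genuine inequalities after the iterated exponential, which amplifies errors) close the argument. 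I expect the main obstacle to be exactly this last bookkeeping: the iterated exponential $\en{n}{\cdot}$ is extremely sensitive, so a multiplicative error $(1+o(1))$ at the level of $\Gi$ becomes, after $\exp^{(n)}$, only controllable because it sits inside $\logn t$ which grows; making this rigorous for all $n\ge 1$ simultaneously, and checking that the stated $\epsilon$-range is precisely what makes $\epsilon_1>0$ and the inequalities consistent, is where the real care is needed, whereas the algebra of $\epsilon_1$ is routine once the structure is set up.
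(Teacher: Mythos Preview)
Your outline is broadly sound and captures the right structure---reduce to inequalities at the level of $\Gi$, compare $\Gi((1\pm\epsilon_1)u_n(t))$ to $\Gi(u_n(t))$, then feed this through the asymptotic \eqref{Eq:Gexpn} and the relation \eqref{Eq:gunt}---but the route differs from the paper's in one substantive way and your algebraic bookkeeping is not quite aligned with the stated $\epsilon_1$.

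The paper does \emph{not} obtain $\Gi((1\pm\epsilon_1)u_n(t)) \sim (1\pm\epsilon_1)^\alpha \Gi(u_n(t))$ via slow variation \textbf{(A2)} as you propose. Instead it applies the mean value theorem to $g_I$ and uses \textbf{(A4)} (in the form $x\,g_I'(x)/g_I(x)\to\alpha$) to get the \emph{linear} bounds
\[
g_I((1+\epsilon_1)u_n(t)) \ge g_I(u_n(t))\Bigl[1+\alpha(1-\epsilon)\tfrac{\epsilon_1}{1+\epsilon_1}\Bigr],\qquad
g_I((1-\epsilon_1)u_n(t)) \le \tfrac{g_I(u_n(t))}{1+\alpha(1-\epsilon)\epsilon_1}.
\]
The formula $\epsilon_1 = 4\epsilon/\bigl(\alpha(1-2\epsilon)(1-\epsilon)-4\epsilon\bigr)$ is chosen precisely so that the first bracket, multiplied by $(1-2\epsilon)$, equals $1+2\epsilon$ exactly, and the second denominator turns $(1+2\epsilon)$ into at most $(1-2\epsilon)$. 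Your proposed check ``$(1-\epsilon')^2(1-\epsilon_1)^\alpha \ge 1-2\epsilon$'' etc.\ has both the wrong shape (powers instead of linear factors) and, in places, the wrong direction; with the given $\epsilon_1$ these particular inequalities will not line up cleanly. Your slow-variation approach can be made to work, but it would naturally produce a \emph{different} $\epsilon_1(\epsilon)$, so the ``routine algebra'' you anticipate is not routine with the constants as stated.

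A second, smaller point: the paper does not use Lemma~\ref{Lem:expnd} here. The passage from the $g_I$-level inequalities (which live at the scale of $\lo{n-1}t$) back to $-\log G$ (scale $t\logn t$) is handled instead by the two-sided estimate $\exp(-\en{n-1}{(1+\epsilon)g_I(x)})\le G(x)\le \exp(-\en{n-1}{(1-\epsilon)g_I(x)})$ together with the direct comparison $\lo{n-1}\bigl(\tfrac{1\pm2\epsilon}{\alpha}t\logn t\bigr)\lessgtr(1\pm2\epsilon)\lo{n-1}t\,(\log t/\alpha)^{\delta_{n,1}}$, checked separately for $n=1$ (trivial) and $n\ge 2$ (ratio tends to $1$). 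Lemma~\ref{Lem:expnd} gives only one direction and is not the right tool for this step.
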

\begin{proof}
First note that $\epsilon<1/2$ for any $\alpha>0$, $0<\epsilon_1 < 1$,
and
\begin{align}
(1-2 \epsilon)\left [ 1 + \alpha (1-\epsilon) \tfrac{\epsilon_1}{1+\epsilon_1}
\right ]= 1 + 2\epsilon,\quad
\tfrac{1+2\epsilon}{1+\alpha \epsilon_1(1-\epsilon)}
\le 1-2\epsilon.
\label{Eq:ee1}
\end{align}
By \eqref{Eq:gunt}, 
there is $\tau_1$  such that 
\begin{align}
\tfrac{1-2 \epsilon}{1-\epsilon} \lo{n-1}t \left ( \tfrac{\log t}{\alpha}
\right )^{\delta_{n,1}} 
\le g_I(u_n(t))
\le
\tfrac{1+ 2\epsilon}{1+\epsilon}\lo{n-1} t \left ( \tfrac{\log t}{\alpha}
\right )^{\delta_{n,1}},
\label{Eq:uncon}
\end{align}
for all $t \ge \tau_1$.
Now we show that there is $\tau_2$ such that, for all $t\ge \tau_2$,
\begin{align}
\nonumber
\lo{n-1}{\tfrac{1+ 2\epsilon}{\alpha}\, t \logn t} \le (1+2\epsilon) \lo{n-1} t
\left ( \tfrac{\log t}{\alpha} \right )^{\delta_{n,1}},\\
\lo{n-1}{\tfrac{1- 2\epsilon}{\alpha} \, t \logn t} \ge (1-2\epsilon) \lo{n-1} t
\left ( \tfrac{\log t}{\alpha} \right )^{\delta_{n,1}}.
\label{Eq:tlogt}
\end{align}
For $n=1$, this is obvious.
For $n\ge 2$, existence of $\tau_2$ follows from
$$
\lim_{t\rightarrow\infty} \tfrac{1}{\lo{n-1}t}\lo{n-1}{\tfrac{1\pm 2\epsilon}{\alpha} \, t \logn t} = 1.
$$
By the mean value theorem, there is $\epsilon_\pm$ such that
$0\le \epsilon_\pm \le \epsilon_1$ and
\begin{align}
\label{Eq:mv}
g_I((1\pm\epsilon_1)u_n(t)) = g_I(u_n(t)) \pm
\epsilon_1 u_n(t) \left .\frac{dg_I}{dx} \right \vert_{x=(1\pm \epsilon_\pm) u_n(t)}.
\end{align}
We do not make the $t$-dependence of $\epsilon_\pm$ explicit, as in the following we will only use the inequality $0 < \epsilon_\pm < \epsilon_1$. By {\bf (A4)} with $j=\gamma=1$ and \eqref{Eq:Gexpn},
\begin{align*}
 \alpha=
\lim_{x\rightarrow\infty} \frac{x}{g_I(x)}\frac{dg_I(x)}{dx} 
=\lim_{x\rightarrow\infty} \frac{d \log g_I(x)}{d \log x}. 
\end{align*}
Hence
there is $x_1$ such that,  for all $x'\ge x\ge x_1$, we have 
$g_I(x') \ge g_I(x)$ and
\begin{align}
\label{Eq:dgdx}
\alpha(1-\epsilon) \frac{g_I(x)}{x} \le \frac{dg_I}{dx},
\end{align}
and 
\begin{align}
\exp\left ( - \en{n-1}{ (1+\epsilon) g_I(x)} \right )
\le G(x) \le
\exp\left ( - \en{n-1}{ (1-\epsilon) g_I(x)} \right ).
\label{Eq:Gineq}
\end{align}
Hence, if $(1-\epsilon_1)u_n(t) > x_1$, then we have
\begin{align*}
g_I((1+\epsilon_1)u_n(t)) 
&\ge g_I(u_n(t)) + \epsilon_1 u_n(t) \alpha (1-\epsilon)
\frac{g_I((1+\epsilon_+)u_n(t))}{(1+\epsilon_+) u_n(t)}\\
&\ge g_I(u_n(t)) + \frac{\epsilon_1}{1+\epsilon_1}  \alpha (1-\epsilon)
g_I((1+\epsilon_+)u_n(t))\\
&\ge g_I(u_n(t)) \left [ 1 + \alpha (1-\epsilon) \frac{\epsilon_1}{1+\epsilon_1}
\right ],
\end{align*}
where we have used \eqref{Eq:mv}, \eqref{Eq:dgdx}, $1/(1+\epsilon_+) \ge 1/(1+\epsilon_1)$ and
$g_I((1+\epsilon_+)u_n(t)) \ge g_I(u_n(t))$;
and 
\begin{align}
\nonumber
g_I((1-\epsilon_1)u_n(t)) &\le g_I(u_n(t)) - \epsilon_1 u_n(t) \alpha (1-\epsilon)
\frac{g_I((1-\epsilon_-)u_n(t))}{(1-\epsilon_-) u_n(t)}\\
&\le g_I(u_n(t)) - \epsilon_1  \alpha (1-\epsilon) g_I((1-\epsilon_1)u_n(t)),
\label{Eq:g-}
\end{align}
where we have used \eqref{Eq:mv}, \eqref{Eq:dgdx}, $-1/(1-\epsilon_-)\le -1$
and $-g_I((1-\epsilon_-)u_n(t)) \le -g_I((1-\epsilon_1)u_n(t))$.
We can rewrite \eqref{Eq:g-} as
$g_I((1-\epsilon_1)u_n(t)) \le \left[ 1+\alpha \epsilon_1(1-\epsilon)\right ]^{-1}
g_I(u_n(t))$.
Therefore, there is $\tau_3$ such that for all $t \ge \tau_3$ we have
\begin{align}
\nonumber
&(1-\epsilon)  g_I((1+\epsilon_1)u_n(t))
\ge (1-\epsilon)  g_I(u_n(t)) 
\left [ 1 + \alpha (1-\epsilon) \frac{\epsilon_1}{1+\epsilon_1} \right ]\\
\nonumber
&\ge (1-2\epsilon) 
\left [ 1 + \alpha (1-\epsilon) \frac{\epsilon_1}{1+\epsilon_1} \right ]
\lo{n-1}t \left ( \frac{\log t}{\alpha} \right )^{\delta_{n,1}}\\
&=(1+2\epsilon) \lo{n-1}t \left ( \frac{\log t}{\alpha} \right )^{\delta_{n,1}}
\ge\lo{n-1}{\tfrac{1+ 2\epsilon}{\alpha} t \logn t},
\label{Eq:ineq1}
\end{align}
and 
\begin{align}
\nonumber
&(1+\epsilon)  g_I((1-\epsilon_1)u_n(t))
\le (1+\epsilon)  g_I(u_n(t)) 
\left [ 1 + \alpha \epsilon_1(1-\epsilon) \right ]^{-1}\\
\nonumber
&\le \frac{1+2\epsilon}{ 1 + \alpha \epsilon_1(1-\epsilon)}
\lo{n-1}t \left ( \frac{\log t}{\alpha} \right )^{\delta_{n,1}}\\
&\le (1-2\epsilon) \lo{n-1}t \left ( \frac{\log t}{\alpha} \right )^{\delta_{n,1}}
\le\lo{n-1}{\tfrac{1- 2\epsilon}{\alpha} t \logn t},
\label{Eq:ineq2}
\end{align}
where we have used \eqref{Eq:ee1}, \eqref{Eq:uncon}, and \eqref{Eq:tlogt}.
To sum up, there is $\tau_0$ such that for all $t \ge \tau_0$,
\begin{align*}
\log G((1+\epsilon_1)u_n(t)) \le - \tfrac{1+2\epsilon}{\alpha}
t \logn t,
\end{align*}
where we have used \eqref{Eq:Gineq} and \eqref{Eq:ineq1}; and
\begin{align*}
\log G((1-\epsilon_1)u_n(t)) \ge - \tfrac{1-2\epsilon}{\alpha}
t \logn t ,
\end{align*}
where we have used \eqref{Eq:Gineq} and \eqref{Eq:ineq2}.
Now, the proof is completed.
\end{proof}

\begin{lemma}
\label{Lem:Xup}
Assume $X(0) < \infty$ and $Q_0 < \infty$.
Fix $\epsilon$ and $\epsilon_1$ as in Lemma~\ref{Lem:Gx} and let, for~$t\ge 0$, \begin{align*}
A_t :=\left \{\log \Xi(t) \le \tfrac{1+\epsilon}{\alpha} (t+m) \logn {t+m} \right \},\quad
E_t :=\left  \{ W_t \le  \left ( 1 + \epsilon_1\right )u_n(t+m)\right \},
\end{align*}
where $m$ is assumed large enough for the definition to make sense.
We use the convention $\log 0 = -\infty$ throughout the paper.
We define a sequence of events $(D_t)_{t\ge 0}$ iteratively as
$$
D_0 = A_0 \cap E_0,\quad D_t = A_t \cap E_t \cap D_{t-1}. 
$$
Let $D := \bigcap_{t=0}^\infty D_t$.
Then,
$$
\lim_{m\rightarrow\infty} \P\left ( D \right ) = 1.
$$
\end{lemma}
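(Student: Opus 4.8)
The plan is to control the two events $A_t$ (upper bound on the total offspring number $\Xi(t)$) and $E_t$ (upper bound on the fittest mutant $W_t$) simultaneously, using a union bound over $t$ together with the conditional estimates already prepared in Section~\ref{Sec:prep}. Writing $\P(D) = 1 - \P(D^c)$ and $D^c = \bigcup_{t\ge 0}(A_t^c \cup E_t^c) \cap D_{t-1}$ (with $D_{-1}$ the whole space), it suffices to show that $\sum_{t\ge 0}\big[\P(A_t^c \cap D_{t-1}) + \P(E_t^c \cap D_{t-1})\big] \to 0$ as $m\to\infty$. For the $E_t$-terms, I would condition on $\Xi(t)$: on the event $D_{t-1}$ we have $\Xi(t) \le y_t := \exp\big(\tfrac{1+\epsilon}{\alpha}(t+m)\logn{t+m}\big)$, and then \eqref{Eq:Wup} gives $\P(W_t > (1+\epsilon_1)u_n(t+m)\mid \Xi(t)\le y_t) \le \beta y_t\, G((1+\epsilon_1)u_n(t+m))$. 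By Lemma~\ref{Lem:Gx}, for $t+m \ge \tau_0$ this is at most $\beta \exp\!\big(\tfrac{1+\epsilon}{\alpha}(t+m)\logn{t+m} - \tfrac{1+2\epsilon}{\alpha}(t+m)\logn{t+m}\big) = \beta\exp\!\big(-\tfrac{\epsilon}{\alpha}(t+m)\logn{t+m}\big)$, which is summable in $t$ and, crucially, the sum over $t\ge 0$ tends to $0$ as $m\to\infty$ since every term is pushed out to larger argument.

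For the $A_t$-terms I would argue inductively along the generations. The mechanism is that $\Xi(t+1)$ is Poisson with mean $X(t)\cdot(\text{sum of fitnesses in generation }t)$, and all those fitnesses are bounded by $Q_t \le \max(Q_0, W_1,\dots,W_t)$; on $D_{t-1}\cap E_t$ this maximum is controlled by $(1+\epsilon_1)u_n(t+m)$ (for the first few generations one absorbs $Q_0$ and $X(0)$ into the constant, which is why $m$ must be large). Hence, conditionally on $D_{t-1}\cap E_t\cap A_t$, the mean of $\Xi(t+1)$ is at most $\Xi(t)\cdot (1+\epsilon_1)u_n(t+m) \le y_t\cdot(1+\epsilon_1)u_n(t+m)$, and I would apply the upper Poisson tail bound \eqref{Eq:Pup} (Lemma~\ref{Lem:GWup}) with a suitable constant $B$ to get that $\Xi(t+1)$ exceeds $B\,y_t\,(1+\epsilon_1)u_n(t+m)$ only with probability at most $2\exp(-\text{const}\cdot y_t)$, which is doubly-exponentially small in $t+m$ and hence harmlessly summable with vanishing total as $m\to\infty$. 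The only thing to check is the arithmetic that $B\,y_t\,(1+\epsilon_1)u_n(t+m) \le y_{t+1} = \exp\big(\tfrac{1+\epsilon}{\alpha}(t+1+m)\logn{t+1+m}\big)$ for large $m$: taking logarithms, the left side is $\tfrac{1+\epsilon}{\alpha}(t+m)\logn{t+m} + \log u_n(t+m) + O(1)$, and since $\log u_n(s) = \tfrac1\alpha\logn s + o(\logn s)$ by \eqref{Eq:unt}–\eqref{Eq:omega} and assumption~{\bf (A2)}, while the right side exceeds the first term by roughly $\tfrac{1+\epsilon}{\alpha}\logn{t+m}$, the inequality holds once $t+m$ is large enough; uniformity in $t$ is what forces the $m\to\infty$ limit rather than a fixed bound.

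Putting these together: choose $m$ so large that $\tau_0$ is cleared at $t=0$, that the initial-condition terms are absorbed, and that both $\sum_t \P(A_t^c\cap D_{t-1})$ and $\sum_t\P(E_t^c\cap D_{t-1})$ are below any prescribed $\eta>0$; then $\P(D)\ge 1-\eta$, and letting $\eta\downarrow 0$ (equivalently $m\to\infty$) gives $\lim_{m\to\infty}\P(D)=1$. The main obstacle I anticipate is \textbf{not} the probabilistic estimates — those are immediate from Lemmas~\ref{Lem:GWup} and~\ref{Th:Wa} and the exponential smallness makes summability trivial — but rather the bookkeeping that makes the induction close \emph{with the same constants for every $t$}: one must verify that the pair of thresholds $(y_t, (1+\epsilon_1)u_n(t+m))$ is genuinely self-propagating under one generation step, i.e. that the slackness $\tfrac{1+\epsilon}{\alpha}\logn{t+m}$ gained per generation in $\log y_t$ really does dominate the growth $\log\big(B(1+\epsilon_1)u_n(t+m)\big)$, uniformly in $t\ge 0$, and to handle the small-$t$ regime where $u_n$ and the iterated logs are not yet in their asymptotic regime by brute-force inclusion of $X(0), Q_0$ into the constant and by enlarging $m$.
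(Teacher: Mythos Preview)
Your approach is essentially the same as the paper's: decompose $\P(D)$ into a product (or, complementarily, $\P(D^c)$ into a union) over the successive events $A_t$ and $E_t$, control $E_t^c$ via \eqref{Eq:Wup} combined with Lemma~\ref{Lem:Gx}, control $A_t^c$ via \eqref{Eq:Pup}, and check the arithmetic that the threshold $y_t$ is self-propagating with slack $\tfrac{\epsilon}{\alpha}\logn{t+m}$ per generation. The paper carries this out in the multiplicative form $\P(D_t)=\P(E_t\mid A_t\cap D_{t-1})\,\P(A_t\mid D_{t-1})\,\P(D_{t-1})$ and packages your arithmetic check into a function $H(x)$ shown to exceed~$2$ for large argument; it also introduces the auxiliary events $E_t':=\{Q_t\le(1+\epsilon_1)u_n(t+m)\}$ and notes $\bigcap_{k\le t}E_k=\bigcap_{k\le t}E_k'$, which is exactly your observation $Q_t\le\max(Q_0,W_1,\dots,W_t)$ made explicit.

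One slip to fix: you write ``on the event $D_{t-1}$ we have $\Xi(t)\le y_t$''. That is not true---$D_{t-1}$ controls $\Xi(t-1)$ and $Q_{t-1}$, not $\Xi(t)$. The event $\{\Xi(t)\le y_t\}$ is $A_t$ itself. The correct conditioning for the $E_t$-bound is on $A_t$ (since $W_t$ depends only on $\Xi(t)$), i.e.\ you should bound $\P(E_t^c\cap A_t\cap D_{t-1})\le\P(E_t^c\mid A_t)$, exactly as the paper does. This is a one-line repair and does not affect the rest of your argument.
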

\begin{proof}
%
Since $\liminf_{t\rightarrow\infty}u_n(t)=\infty$ and $\logn t$ is an unbounded and
increasing function, there is $t_1$ such that 
$u_n(m) \ge 1$, $(1+\epsilon) m \logn m \ge \alpha (m+1)$, 
$(1+\epsilon) m \logn m\ge \alpha \log X(0) $
and $(1+\epsilon_1)u_n(m) \ge Q_0$ for all $m > t_1$.
Let 
\begin{align*}
H(x)&:= 
\frac{1+\epsilon}{\alpha} (x+1) \logn{x+1} - 
\frac{1+\epsilon}{\alpha} x \logn{x} - 
\log \left ( u_n(t)\right ) - \log(1+\epsilon_1) \\
&= \frac{x+1}{\alpha'} \left [ \logn{x+1}-\logn{x} \right ] + 
\epsilon \frac{\logn x}{\alpha} - \log \left ( \omega_W(\logn x)\right ) - \log(1+\epsilon_1),
\end{align*}
where $\alpha' := \alpha/(1+\epsilon)$.
Since $\liminf_{x\rightarrow\infty}H(x)=\infty$,
there is $t_2$ such that $H(x) > 2$ for all $x > t_2$.
By Lemma~\ref{Lem:Gx}, we can choose $t_3$ such that
\begin{align}
\log G((1+\epsilon_1) u_n(x)) \le - \frac{1+2\epsilon}{\alpha}
x \logn x,
\label{Eq:Gup}
\end{align}
for all $x > t_3$.
From now on, we only consider large $m$ such that $m > t_0:=\max\{t_1,t_2,t_3\}$.
For convenience, we define $\tau_t := t+m$.
Let
$
E_t' :=\left  \{ Q_t \le (1+\epsilon_1)u_n(\tau_t) \right \}.
$
Since $E_0 = E_0'$ and $E_{t+1} \cap E_t' = E_{t+1}' \cap E_t'$ even though
$E_t'$ can be a proper subset of $E_t$,
we have
\begin{align}
\bigcap_{k=0}^t E_k
=\bigcap_{k=0}^t E_k'.
\label{Eq:freeE}
\end{align}
We have, for $k \ge 1$, that
\begin{align*}
\P\left (A_k\vert A_{k-1} \cap E_{k-1}'\right )
\ge 1 - 2 \exp\left (-e^{\tau_k}
\right )=:1- \xi_k,
\end{align*}
where we have used \eqref{Eq:Pup} with
$
f \mapsto (1+\epsilon_1)u_n(\tau_{k-1})\ge 1,$
$x \mapsto \exp  ( \frac{1+\epsilon}{\alpha}\tau_{k-1}\logn{\tau_{k-1}} ) \ge e^{\tau_k},$
and 
$B \mapsto e^{H(\tau_{k-1})} \ge e^2$ and the fact $S_t \le Q_t$.\medskip

Observe that
$\P(D_t) = \P(E_t \vert A_t \cap D_{t-1} ) \P(A_t \vert D_{t-1} )\P(D_{t-1})$.
Using Lemma~\ref{Lem:Markov} and \eqref{Eq:freeE}, we have
$$
\P\left (A_k \vert D_{k-1}\right )
\ge  1-\xi_k
$$
Since $W_k$ is purely determined by $\Xi(k)$, $E_k$ is independent
of $D_{k-1}$ and, accordingly, we have
$$
\P(E_k \vert  A_k \cap D_{k-1}) = \P(E_k \vert  A_k)
\ge 1 - \beta \exp \left ( -\frac{\epsilon}{\alpha} \tau_k \logn {\tau_k} \right )
 =: 1-\eta_k,
$$
where we have used \eqref{Eq:Wup} with 
$\alpha \log y \mapsto (1+\epsilon)\tau_{k} \logn{\tau_k}$,
$x \mapsto (1+\epsilon_1) u_n(\tau_k)$,
and \eqref{Eq:Gup}.
Therefore, 
\begin{align}
\P(D) \ge \prod_{k=1}^\infty (1-\xi_k)(1-\eta_k)
\ge 1 - \sum_{k=1}^\infty (\xi_k + \eta_k).
\label{Eq:Dser}
\end{align}
Note that
$\lim_{m\rightarrow\infty} (\xi_{k}+\eta_{k})=0$.
Since
$
\lim_{k\rightarrow\infty} (\xi_k+\eta_k) \tau_k^2 = 0,
$
there is a constant $c$ that is independent of $m$ such that 
$\xi_k + \eta_k \le c \tau_k^{-2} \le c k^{-2}$ for all $k$.
Hence, the series in~\eqref{Eq:Dser} converges uniformly 
for all $m>t_0$ and, therefore,
$
\lim_{m\rightarrow\infty} \P(D) = 1,$
which completes the proof.
\end{proof}

\begin{lemma}[\textbf{Upper bound}]
\label{Cor:Xup}
If $X(0)<\infty$ and $Q_0<\infty$, then
almost surely,
\begin{align*}
\limsup_{t\rightarrow\infty} \frac{\log X(t)}{t \logn t} \le \frac{1}{\alpha},\quad
\limsup_{t\rightarrow\infty} \frac{W_t}{u_n(t)} \le 1.
\end{align*}
\end{lemma}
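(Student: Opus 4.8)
The plan is to read the corollary off from Lemma~\ref{Lem:Xup} by letting $\epsilon\to 0$ and invoking the Borel--Cantelli lemma. The event $D$ produced in Lemma~\ref{Lem:Xup} depends both on the (fixed, admissible) $\epsilon$ from Lemma~\ref{Lem:Gx} and on the shift parameter $m$, so write it as $D(\epsilon,m)$. On $D(\epsilon,m)$ one has, for \emph{every} $t\ge 0$, that
$$
\log X(t)\le\log\Xi(t)\le\tfrac{1+\epsilon}{\alpha}(t+m)\logn{t+m},\qquad W_t\le(1+\epsilon_1)u_n(t+m),
$$
where the first chain uses the trivial bound $X(t)\le\Xi(t)$.

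The only analytic input I would need is that a finite shift of the time variable does not affect the relevant limsups: for fixed $m$,
$$
(t+m)\logn{t+m}\sim t\logn t\qquad\text{and}\qquad u_n(t+m)\sim u_n(t)\qquad(t\to\infty).
$$
The first is clear since $\logn{t+m}\sim\logn t$; for the second one uses the explicit form \eqref{Eq:unt}--\eqref{Eq:omega} of $u_n$, where the only non-trivial factor is $L\big((\lo{n-1}{t+m})^{1/\alpha}\big)\big/L\big((\lo{n-1}t)^{1/\alpha}\big)$, and this tends to $1$ by {\bf (A2)} applied with the perturbation $\ell(t)=(\lo{n-1}{t+m}/\lo{n-1}t)^{1/\alpha}$, which is bounded and hence trivially satisfies \eqref{Eq:conomega}. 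Dividing the two bounds valid on $D(\epsilon,m)$ by $t\logn t$ and by $u_n(t)$ respectively and letting $t\to\infty$ then gives, on $D(\epsilon,m)$,
$$
\limsup_{t\to\infty}\frac{\log X(t)}{t\logn t}\le\frac{1+\epsilon}{\alpha},\qquad\limsup_{t\to\infty}\frac{W_t}{u_n(t)}\le 1+\epsilon_1.
$$

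To upgrade this to an almost-sure statement, fix a sequence $\epsilon_j\downarrow 0$ of values admissible for Lemma~\ref{Lem:Gx}; the associated $\epsilon_{1,j}$ then also decrease to $0$. For each $j$, Lemma~\ref{Lem:Xup} lets me choose $m_j$ so large that $\P\big(D(\epsilon_j,m_j)\big)\ge 1-2^{-j}$. Since $\sum_j 2^{-j}<\infty$, Borel--Cantelli shows that almost surely $\omega\in D(\epsilon_j,m_j)$ for all sufficiently large $j$; on this event of full probability both limsups are at most $\frac{1+\epsilon_j}{\alpha}$ and at most $1+\epsilon_{1,j}$ for all large $j$, hence at most $\frac1\alpha$ and at most $1$. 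On the extinction event the two bounds are anyway trivial, with the convention $\log 0=-\infty$ and $W_t=0$ for large $t$, so nothing special is needed there.

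I do not expect a genuine obstacle here: the substantive estimates are already contained in Lemma~\ref{Lem:Xup}. The two points requiring a little care are precisely the two flagged above — checking that the limsup is insensitive to the finite time shift, which is where {\bf (A2)} enters, and organising the passage $\epsilon\to 0$ (via Borel--Cantelli on a well-chosen sequence, rather than through a single event) so that it yields one almost-sure conclusion instead of a family of merely positive-probability ones.
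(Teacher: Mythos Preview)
Your argument is correct and follows the same route as the paper: deduce from Lemma~\ref{Lem:Xup} that on $D(\epsilon,m)$ the two limsups are bounded by $\frac{1+\epsilon}{\alpha}$ and $1+\epsilon_1$, then send $m\to\infty$ and $\epsilon\to 0$.  The paper organises the last step slightly more simply: since the limsup event $C(\epsilon):=\{\limsup\frac{\log X(t)}{t\logn t}\le\frac{1+\epsilon}{\alpha}\}$ does \emph{not} depend on $m$, the inclusion $D(\epsilon,m)\subset C(\epsilon)$ and Lemma~\ref{Lem:Xup} give $\P(C(\epsilon))\ge\lim_{m}\P(D(\epsilon,m))=1$ directly, with no need for Borel--Cantelli; the analogous argument handles $W_t$.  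Your Borel--Cantelli version is equally valid, just a touch heavier.

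One small remark on your justification of $u_n(t+m)\sim u_n(t)$: you are right that this reduces to $L$ evaluated at two arguments whose ratio tends to $1$, and {\bf (A2)} indeed covers this since a function tending to $1$ trivially satisfies \eqref{Eq:conomega}. The paper does not spell this out, using the inclusion $\tilde C(m,\epsilon)\subset C(\epsilon)$ (and the analogous statement for $E$) without comment, so your version is in fact more explicit on this point.
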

\begin{proof}
Choose $\epsilon$ and $\epsilon_1$ as in Lemma~\ref{Lem:Gx}.
Let 
\begin{align*}
C(\epsilon) &:= \left \{ \limsup_{t\rightarrow\infty} \frac{\log X(t)}{t \logn t} \le \frac{1+\epsilon}{\alpha} \right \},\\
\tilde C(m,\epsilon) &:= \bigg \{\log X(t) \le (1+\epsilon)\frac{(t+m) \logn{t+m}}{\alpha} 
\text{ for all } t \bigg \}.
\end{align*}
We use $D$ in Lemma~\ref{Lem:Xup} with $m$ to be the same meaning as in this lemma.
Since $D \subset \tilde C(m,\epsilon) \subset C(\epsilon)$ for any $m >0$,
Lemma~\ref{Lem:Xup} gives $\P(C(\epsilon))=1$.
Defining $E = \bigcap_{t=1}^\infty E_t$ we get
\smash{$
\lim\limits_{m\rightarrow\infty} \P(E) = 1,
$}
because $D \subset E$.
Therefore,
$$
\P\left (\limsup_{t\rightarrow\infty} \frac{W_t}{u_n(t)} \le 1+\epsilon_1\right ) 
\ge \lim_{m\rightarrow\infty}\P(E)
= 1.
$$
Since $\epsilon$ is arbitrary, the proof is completed.
\end{proof}

\noindent
{\bf Definition (Initial condition for Lemma~\ref{Lem:LB}).}
Choose $\epsilon$ as in Lemma~\ref{Lem:Gx}. Let
\begin{align}
\nonumber
\alpha_1:=\alpha\left (1-\frac{\epsilon}2\right)^{-1},\quad
f_k &:= (1-\beta) u_n(k),\quad
b_k := \frac{1}{1-\beta}\exp \left ( -\frac{\epsilon}{2\alpha} \logn{k} \right ), 
\\ b_k f_k &= \exp \left ( \frac{\logn{k}}{\alpha_1} + 
\log \left ( \omega_W\left ( \logn k \right ) \right )\right ),
\label{Eq:bkfk1}
\end{align}
where $k$ is assumed sufficiently large in order for the definition 
to make sense. We also define
\begin{align}
\nonumber
H(m,x)&:= \frac{\logn m}{\alpha_1}(x-m) 
+ (x-m) \log \left ( \omega_W(\logn m) \right ),\\ 
\nonumber
h(m,x)&:= H(m,x)  - \frac{1-\epsilon}{\alpha}x \logn x,\\
\tau_j(m)&:=\en{n}{ \left (1+j \epsilon_2 \right ) \logn m},
\quad \epsilon_2 := \frac{\epsilon}{8 (1-\epsilon)}<\frac{\epsilon}{4},
\label{Eq:Hh12}
\end{align}
where $j=1,2$ and we assume $\logn m>1$, $\omega_W(\logn m) > 0$, and $x>m$. 
Note that 
$(1-\epsilon/2)/(1+2\epsilon_2) > 1 - \epsilon$.
Since $$
(b_m f_m)^{x-m}\exp\left (-\frac{1-\epsilon}{\alpha} x \logn{x}\right ) = e^{h(m,x)},
$$
$h(m,x) \ge 0$ implies 
$(b_m f_m)^{x-m}\ge \exp\left (\frac{1-\epsilon}{\alpha} x \logn{x}\right )$.
\medskip

We choose an integer $k_0$ as in Lemma~\ref{Lem:k0I}.
Once $k_0$ is fixed, we define an initial condition for any integer $t_0 \ge k_0$. In generation $0$, there are $t_0 - k_0 + 1$ different mutant types
with fitness $F_k = f_k/(1-\beta)$ ($k_0 \le k \le t_0$) and
the number $M_k(0)$ of individuals with fitness $F_k$ is 
$M_k(0)=\lceil f_k^{t_0-k} \rceil \ge (b_k f_k)^{t_0-k}$.
We denote the number of nonmutated descendants of $M_k(0)$ 
in generation $t$ by $M_k(t)$.
\medskip

For convenience, we denote the largest fitness among mutants at generation 
$k\ge 1$ by $F_{k+t_0}$ and its nonmutated descendants at generation $t\ge k$ by
$M_{k+t_0}(t)$. Note that $W_k = F_{k+t_0}$ and $N_k(t) = M_{k+k_0}(t)$ for $k\ge 1$. 
We set $F_{k+t_0}=0$ if there are no new mutants at generation $k$.
If $F_{k+t_0} = 0$, we write $M_{k+t_0}(t) = 0$ for all $t$.
If $F_{k+t_0} > 0$, we set $M_{k+t_0}(k) = 1$. 
That is, even if there are many mutants with the same largest fitness
$F_{k+t_0}$, which may frequently happen in the \mm
if discrete fitness values are allowed to be sampled, $M_{k+t_0}(t)$ only
concerns descendants of a single individual among them. 
Finally, we define
$$
\fX(t) := \sum_{k=k_0}^{t+t_0} M_k(t).$$
Note that $\fX(t) \le X(t)$ and equality holds for the \fm.

\begin{lemma}
\label{Lem:k0I}
For $b_k$, $f_k$ in \eqref{Eq:bkfk1} and for $H$, $h$, $\tau_1$, $\tau_2$ in
\eqref{Eq:Hh12},
there is an integer $k_0$, which is larger than $\en{n}1$, such that 
for all $m \ge k_0$
\begin{description}
\item[(Condition 1)] $0<b_{m} < b_c$, $(1-b_m + b_m \log b_m) f_m > 1$, and $b_m f_m > e$ (see Remark~\ref{Rem:bc} for the motivation of this condition);
\item[(Condition 2)] $h(m,x) > 0$ with any $x$ satisfying $\tau_1(m) \le x \le \tau_2(m)$.
\end{description}
\end{lemma}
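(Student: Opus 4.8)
The plan is to verify the two conditions separately, each by a soft asymptotic argument, and then take $k_0$ to be the maximum of the resulting thresholds together with an integer larger than $\en{n}{1}$, so that $\logn m > 1$ and all the quantities appearing in \eqref{Eq:bkfk1}--\eqref{Eq:Hh12} are well defined for every $m \ge k_0$. The only analytic inputs beyond the elementary facts $\logn m \to \infty$ and $u_n(m) \to \infty$ are that $L$ grows and decays subpolynomially; concretely, \eqref{Eq:conomega} yields $|\log L(x)| = o(\log x)$, and since $\log \omega_W(y) = \tfrac{\delta_{n,1}}{\alpha}\log(\log y/\alpha) - \tfrac{1}{\alpha}\log L(y^{1/\alpha})$ by \eqref{Eq:omega}, this gives
\[
\log \omega_W(\logn m) = o\big(\logn m\big), \qquad m \to \infty,
\]
which I will use repeatedly.

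\emph{Condition 1.} Since $b_m = \tfrac{1}{1-\beta}\exp(-\tfrac{\epsilon}{2\alpha}\logn m) \to 0$, we have $0 < b_m < b_c$ for large $m$. As $1-b+b\log b \to 1$ when $b \downarrow 0$ and $f_m = (1-\beta)u_n(m) \to \infty$, the product $(1-b_m+b_m\log b_m)f_m \to \infty$, so it exceeds $1$ for large $m$. Finally, by \eqref{Eq:bkfk1} and the displayed bound above, $b_m f_m = \exp\big(\tfrac{\logn m}{\alpha_1} + \log\omega_W(\logn m)\big) = \exp\big(\tfrac{\logn m}{\alpha_1}(1+o(1))\big) \to \infty$, hence $b_m f_m > e$ for large $m$.

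\emph{Condition 2.} Here I would fix $m$ and $x$ with $\tau_1(m) \le x \le \tau_2(m)$, i.e. $(1+\epsilon_2)\logn m \le \logn x \le (1+2\epsilon_2)\logn m$. Writing $h(m,x) = (x-m)\big[\tfrac{\logn m}{\alpha_1}+\log\omega_W(\logn m)\big] - \tfrac{1-\epsilon}{\alpha}\,x\logn x$ and using the two elementary bounds $\logn x \le (1+2\epsilon_2)\logn m$ and $x - m \ge x\big(1 - m/\tau_1(m)\big)$ (valid once the bracket $\tfrac{\logn m}{\alpha_1}+\log\omega_W(\logn m)$ is positive, which holds for large $m$), I obtain
\[
h(m,x) \ge x\,\logn m\left[\Big(1-\tfrac{m}{\tau_1(m)}\Big)\Big(\tfrac{1}{\alpha_1} + \tfrac{\log\omega_W(\logn m)}{\logn m}\Big) - \tfrac{(1-\epsilon)(1+2\epsilon_2)}{\alpha}\right].
\]
Now $m/\tau_1(m) = \en{n}{\logn m}/\en{n}{(1+\epsilon_2)\logn m} \to 0$ and $\log\omega_W(\logn m)/\logn m \to 0$, so the bracket tends to $\tfrac{1}{\alpha_1} - \tfrac{(1-\epsilon)(1+2\epsilon_2)}{\alpha} = \tfrac{1}{\alpha}\big[(1-\tfrac{\epsilon}{2}) - (1-\epsilon)(1+2\epsilon_2)\big]$, which is strictly positive by the recorded inequality $(1-\epsilon/2)/(1+2\epsilon_2) > 1-\epsilon$. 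All error terms in the bracket depend only on $m$, hence are uniform in $x$ over $[\tau_1(m),\tau_2(m)]$, so there is $k_0'$ such that the bracket, and therefore $h(m,x)$, is positive for all $m \ge k_0'$ and all $x \in [\tau_1(m),\tau_2(m)]$.

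Taking $k_0$ to be the maximum of the thresholds from the two conditions and of an integer exceeding $\en{n}{1}$ finishes the proof. The only step requiring genuine care is Condition 2: one must keep the $o(1)$ estimates on $m/\tau_1(m)$ and on $\log\omega_W(\logn m)/\logn m$ uniform over the entire window $[\tau_1(m),\tau_2(m)]$, and check that the fixed, $m$-independent gap coming from $(1-\epsilon/2)/(1+2\epsilon_2) > 1-\epsilon$ dominates them — which is precisely what the choice $\epsilon_2 = \epsilon/(8(1-\epsilon))$ is tuned to guarantee.
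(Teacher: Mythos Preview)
Your proof is correct and follows essentially the same route as the paper. Both arguments dispatch Condition~1 by noting that $b_m\to 0$ and $f_m\to\infty$, and handle Condition~2 by bounding $h(m,x)/(x\logn m)$ from below by a quantity depending only on $m$ whose limit is $\tfrac{1}{\alpha}\big[(1-\epsilon/2)-(1-\epsilon)(1+2\epsilon_2)\big]>0$, using $\logn x\le(1+2\epsilon_2)\logn m$, $m/\tau_1(m)\to 0$ (equivalently $x_m\ge m^{1+\epsilon_2}$ via Lemma~\ref{Lem:expnd}), and $\log\omega_W(\logn m)=o(\logn m)$; the paper packages this as a $\liminf$ over arbitrary sequences $(x_m)$ while you write out the explicit lower bound, but the content is the same.
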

\begin{proof}
It is obvious that there is an integer $k_1$ such that (\textbf{Condition 1})
is satisfied for all $m \ge k_1$.
For any sequence $(x_m)_{m \ge \lceil \en{n}1 \rceil}$ such that
$\tau_1(m) \le x_m \le \tau_2(m)$, we have
\begin{align}
\liminf_{m\rightarrow\infty} \frac{\alpha H(m,x_m)}{x_m \logn{x_m}}
\ge
\frac{1-\epsilon/2}{1 + 2\epsilon_2} \liminf_{m\rightarrow\infty} \frac{(x_m-m) \logn{m}}{ x_m \logn{m}}
=\frac{1-\epsilon/2}{1 + 2\epsilon_2} 
> 1-\epsilon,
\label{Eq:hmxt}
\end{align}
where we have used $\logn{x_m} \le (1+2\epsilon_2) \logn{m}$ by assumption,
$x_m \ge \tau_1(m) \ge m^{1+\epsilon_2}$ for $m \ge \en{n}1$ (Lemma~\ref{Lem:expnd}),
and $\omega_W$ is a slowly varying function.
Therefore, there is an integer $k_2$ such that $h(m,x) > 0$ for all $m \ge k_2$
with any $x$ satisfying $\tau_1(m)\le x\le \tau_2(m)$.
Now we set $k_0 = \max\{\lceil \en{n}{1}\rceil, k_1,k_2\}$ and the
proof is completed.
\end{proof}
\begin{remark}
\label{Rem:hmx}
Inequality \eqref{Eq:hmxt} is valid even if we relax the lower bound of $x_m$ as long as
$\lim_{m\rightarrow\infty} m/x_m = 0$. For example, replacing $\tau_1(m)$ by $m \sqrt{\log m}$
still gives $h(m,x) >0$ for sufficiently large $m$.
\end{remark}
\begin{lemma}
\label{Lem:LB}
We fix $\epsilon$ and $\epsilon_1$ as in Lemma~\ref{Lem:Gx}.
We also use the initial conditions defined above with $t_0 \ge k_0$ and define
\begin{align*}
E_t&:=\left \{\log \fX(t) \ge \frac{1-\epsilon}{\alpha} (t+t_0)\logn{t+t_0} 
\right \}, \quad E := \bigcap_{t=1}^\infty E_t,\\
J_t&:=\left \{W_{t} \ge (1-\epsilon_1)u_n(t+t_0)\right \}, \quad J:= \bigcap_{t=1}^\infty J_t.
\end{align*}
Then,
$$
\lim_{t_0\rightarrow\infty}
\P\left ( E \right )=
\lim_{t_0\rightarrow\infty}
\P\left ( J \right )=1.
$$
\end{lemma}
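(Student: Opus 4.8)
The plan is to certify each event $E_t$ by a single mutant lineage. If the non-mutated descendants $M_m$ of some type $m$ have grown ``on track'', i.e.\ $M_m(t)\ge(b_mf_m)^{t+t_0-m}$ in the sense of Lemma~\ref{Lem:nPoi}, then, using the identity $(b_mf_m)^{x-m}=e^{h(m,x)}\exp\big(\tfrac{1-\epsilon}{\alpha}x\logn{x}\big)$ together with (\textbf{Condition 2}) of Lemma~\ref{Lem:k0I}, one gets $M_m(t)\ge\exp\big(\tfrac{1-\epsilon}{\alpha}(t+t_0)\logn{t+t_0}\big)$, so that $\fX(t)\ge M_m(t)$ already forces $E_t$, for every $t$ in the \emph{window} $\{t:\tau_1(m)\le t+t_0\le\tau_2(m)\}$ of type $m$. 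Since $\tau_1,\tau_2$ are increasing and $\logn{m+1}/\logn{m}\to1<\tfrac{1+2\epsilon_2}{1+\epsilon_2}$, consecutive windows overlap --- indeed with room to skip many types --- so a suitably chosen, necessarily \emph{sparse}, family of types has windows covering all of $\{1,2,\dots\}$ for every large $t_0$. It then remains to ensure that each chosen lineage is on track and, for the lineages founded by mutants arising \emph{during} the process, that the founding mutant was large enough.

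First I would treat the seed regime $t\le T_0$, $T_0:=\tau_2(t_0)-t_0$. The pre-seeded type $m$ ($k_0\le m\le t_0$) has fitness $u_n(m)$ and starts in generation $0$ with $M_m(0)\ge(b_mf_m)^{t_0-m}$ individuals, its non-mutated descendants forming a Galton--Watson process with Poisson offspring of mean $f_m=(1-\beta)u_n(m)$, independently across $m$. Reading $M_m$ as such a process ``virtually started'' $t_0-m$ generations before generation $0$, Lemma~\ref{Lem:nPoi} (in the form of Remark~\ref{Rem:bc}, licensed by (\textbf{Condition 1}) of Lemma~\ref{Lem:k0I}) applies with the hypothesis $\X_\tau\ge(b_mf_m)^{\tau}$, $\tau=t_0-m$, holding \emph{deterministically}, and yields $M_m(t)\ge(b_mf_m)^{t+t_0-m}$ for all $t\ge0$ off an event of probability at most $2f_me^{-f_m/2}$. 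Because $\tau_1(m)\ge m^{1+\epsilon_2}$ (Lemma~\ref{Lem:expnd}) while $T_0$ is large, the windows overlap with much room to spare, so one only needs a sparse subsequence $t_0=\mu_0>\mu_1>\dots>\mu_J$ with $\tau_2(\mu_{j+1})\ge\tau_1(\mu_j)$ and $\tau_1(\mu_J)\le t_0$; a short computation with iterated logarithms, using (\textbf{A2}), shows that the number $J+1$ of members is bounded independently of $t_0$ while $\mu_J\to\infty$ as $t_0\to\infty$. On the event that all $J+1$ of these lineages are on track --- complement of probability $\le(J+1)\,2f_{\mu_J}e^{-f_{\mu_J}/2}\to0$ --- their windows tile $\{1,\dots,T_0\}$, so $E_t$ holds for all $t\le T_0$.

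For the relay regime I would argue by induction along a sparse increasing sequence of generations $k^{(0)}<k^{(1)}<\cdots$, each $k^{(i)}$ chosen as large as possible so that the window of $k^{(i)}$ begins no later than the coverage level $T_i$ reached by the earlier stages, where $T_0$ is as above and $T_{i+1}:=\tau_2(k^{(i)}+t_0)-t_0$; the slow growth of the iterated logarithms again gives $k^{(i)}\le T_i$ and $T_{i+1}>T_i$. Assume $E_t$ holds for all $t\le T_i$. As $\fX(t)\le X(t)$, this gives $X(k^{(i)})\ge\exp\big(\tfrac{1-\epsilon}{\alpha}(k^{(i)}+t_0)\logn{k^{(i)}+t_0}\big)$, so by \eqref{Eq:Wdown} and the bound $\log G((1-\epsilon_1)u_n(k^{(i)}+t_0))\le-\tfrac{1-2\epsilon}{\alpha}(k^{(i)}+t_0)\logn{k^{(i)}+t_0}$ of Lemma~\ref{Lem:Gx}, the fittest mutant satisfies $W_{k^{(i)}}\ge(1-\epsilon_1)u_n(k^{(i)}+t_0)$ off an event of conditional probability $\le\exp\big(-\beta\exp(\tfrac{\epsilon}{\alpha}(k^{(i)}+t_0)\logn{k^{(i)}+t_0})\big)$. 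On that event the non-mutated descendants of this mutant form a branching process with offspring mean $(1-\beta)W_{k^{(i)}}\ge(1-\epsilon_1)f_{k^{(i)}+t_0}$, which stochastically dominates a Poisson Galton--Watson process to which Remark~\ref{Rem:bc} applies (its hypotheses all holding once $k^{(i)}+t_0$ is large, in particular because $b_{k^{(i)}+t_0}\to0$); hence this lineage is on track off an event of probability $\le2f_{k^{(i)}+t_0}e^{-(1-\epsilon_1)f_{k^{(i)}+t_0}/2}$, and then $E_t$ holds throughout the window of $k^{(i)}$, so for all $t\le T_{i+1}$. As $T_i\to\infty$, the intersection over $i$ of these good events, together with the seed event above, forces $E_t$ for all $t\ge1$, with complement of probability at most
\[
(J+1)\,2f_{\mu_J}e^{-f_{\mu_J}/2}+\sum_{i\ge0}\Big(2f_{k^{(i)}+t_0}e^{-(1-\epsilon_1)f_{k^{(i)}+t_0}/2}+\exp\big(-\beta\exp(\tfrac{\epsilon}{\alpha}(k^{(i)}+t_0)\logn{k^{(i)}+t_0})\big)\Big),
\]
in which every index grows without bound as $t_0\to\infty$ while the $i$-th summand decays fast enough for the series to be dominated uniformly in $t_0$; hence this bound tends to $0$ and $\lim_{t_0\to\infty}\P(E)=1$. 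Finally, on $E$ one has $X(t)\ge\exp\big(\tfrac{1-\epsilon}{\alpha}(t+t_0)\logn{t+t_0}\big)$ for \emph{every} $t\ge1$, and the estimate just used, now applied at every generation, gives $\P(J^c\cap E)\le\sum_{t\ge1}\exp\big(-\beta\exp(\tfrac{\epsilon}{\alpha}(t+t_0)\logn{t+t_0})\big)\to0$; together with $\P(E^c)\to0$ this yields $\P(J)\ge\P(E)-\P(J^c\cap E)\to1$.

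The step I expect to be the main obstacle is making the relay bookkeeping rigorous: one must show that the windows of the chosen types tile $\{1,2,\dots\}$ with no gaps for all large $t_0$ --- which really forces the subsequences to be sparse, since a naive union bound over \emph{all} admissible types diverges once $n\ge3$, or $n=2$ and $\alpha>1$ --- that each relay generation $k^{(i)}$ sits below the coverage level $T_i$ already secured so the induction closes, and that the conditioning in the relay step (the $k^{(i)}$-lineage is only a Poisson Galton--Watson process \emph{given} the configuration at generation $k^{(i)}$ and the event $\{W_{k^{(i)}}\ \text{large}\}$) is handled cleanly, via an explicit coupling, before Remark~\ref{Rem:bc} is invoked. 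The remaining items --- that $J$ is bounded in $t_0$, that $\tau_1(\mu_J)\le t_0$ forces $\mu_J\to\infty$, and the summability of the displayed error series --- are routine given (\textbf{A1})--(\textbf{A2}) and the definitions of $f_m$, $b_m$, $\tau_1$, $\tau_2$.
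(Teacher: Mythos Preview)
Your plan is correct and is essentially the same argument as the paper's: choose a sparse family of mutant types whose windows (from Lemma~\ref{Lem:k0I}) tile all generations, control each lineage via Remark~\ref{Rem:bc}, feed the resulting lower bound on $\fX$ into \eqref{Eq:Wdown} together with Lemma~\ref{Lem:Gx}, and show the error terms are summable uniformly in $t_0$. The paper merely organizes this differently---it uses a single sequence $m_\ell=\lfloor\en{n-1}{\ell+\lo{n-1}{t_0/\log t_0}}\rfloor$ (the first few pre-seeded, the rest mutants) and builds $C_t=J_t\cap A_t\cap C_{t-1}$ so that $E$ and $J$ fall out together, whereas you split into seed/relay regimes and recover $J$ from $E$ at the end; one small slip in your write-up is that the relevant bound from Lemma~\ref{Lem:Gx} is $\log G((1-\epsilon_1)u_n(\cdot))\ge-\tfrac{1-2\epsilon}{\alpha}(\cdot)\logn{\cdot}$, not $\le$.
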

\begin{proof}
We define a sequence $(m_\ell)_{\ell\ge 0}$ as
\begin{align*}
m_\ell := \left \lfloor \en{n-1}{\ell +\lo{n-1}{\tfrac{t_0}{\log t_0}} } \right \rfloor.
\end{align*}
We first work out how large $t_0$ should be.
Obviously, there exists $t_1$ such that
$k_0<m_0 < t_0$ for all $t_0 > t_1$.
Since
\begin{align*}
\lim_{t_0\rightarrow\infty} \frac{t_0}{\tau_2(m_0)} = 0,\quad
\lim_{t_0\rightarrow\infty} \frac{\alpha H(m_0,t_0+1)}{(t_0+1) \logn{t_0+1}} = 1> 1-\epsilon,
\end{align*}
there is $t_2$ such that
$\tau_2(m_0) \ge t_0+1$ and $h(m_0,t_0+1)>0$ for all $t_0 > t_2$.
In fact, $h(m_0,x) > 0$ for all $t_0+1 \le x \le \tau_2(m_0)$; see Remark~\ref{Rem:hmx}.
Since
$$
\lim_{t_0\rightarrow\infty} \frac{\logn{m_\ell}}{\logn{m_{\ell-1}}}=
\lim_{\ell\rightarrow\infty} \frac{\logn{m_\ell}}{\logn{m_{\ell-1}}}=1,
$$
there is $t_3$ such that
$\tau_2(m_{\ell-1}) \ge \tau_1(m_{\ell})$ for all $t_0 > t_3$ and
for all $\ell \ge 1$.
By Lemma~\ref{Lem:Gx}, there is $t_4$ such that
\begin{align}
\log G\left ( (1-\epsilon_1)u_n(t+t_0) \right ) \ge -\frac{1-2\epsilon}{\alpha} (t+t_0) \logn {t+t_0},
\label{Eq:Ge0}
\end{align}
for all $t_0 > t_4$ and for all $t \ge 0$.
For later references, we define sequences $(\xi_\ell)_{\ell \ge 0}$
and $(\eta_t)_{t \ge 0}$ as
\begin{align*}
\eta_t &:= \exp \left (  - \beta \exp \left ( \frac{\epsilon}{\alpha}
(t+t_0) \logn{t+t_0} \right ) \right ),\\
\xi_\ell &:= 2 (1-\beta)u_n(m_\ell) \exp\left (-\tfrac{1-\beta}2 u_n(m_\ell)\right ).
\end{align*}
Since $\logn{x}$ is unbounded and increasing function for large $x$,
there is $t_5$ such that 
$\epsilon \logn{t_0} \ge \alpha$
for all $t_0>t_5$ and all $\ell \ge 0$. 
Therefore, we have 
$\eta_t \le \exp\left ( - \beta e^t \right ),$
for all $t_0 > t_5$.
This implies that
$\sum_t \eta_t$ is uniformly convergent
for all $t_0$ and therefore,
\begin{align}
\lim_{t_0\rightarrow\infty} \sum_{t=1}^\infty \eta_t 
=\sum_{t=1}^\infty \lim_{t_0\rightarrow\infty} \eta_t 
= 0.
\label{Eq:etasum}
\end{align}
Since $4 x e^{-x} \le e^{-x/2}$ for $x \ge 10$,
we have 
$
\xi_\ell \le \exp ( -\frac{1-\beta}{4} u_n(m_\ell) ),
$
for $(1-\beta)u_n (m_\ell) \ge 10$.
Since 
$$
\lim_{x\rightarrow\infty} \frac{4 \left ( \lo{n-1}x\right )^{1/\alpha_1}}{(1-\beta) u_n(x)} = 0,
$$ 
there is $t_6$ such that
$(1-\beta)u_n(m_\ell) \ge 10$ for any $\ell \ge 0$ and
$$
\frac{1-\beta}{4} u_n(m_\ell) \ge \left ( \lo{n-1}{m_\ell} \right )^{1/\alpha_1}
\ge \ell^{1/\alpha_1},
$$
for all $t_0 > t_6$.
Note that, under this assumption, we have $\xi_\ell \le \exp\left (-\ell^{1/\alpha_1}\right )$,
which shows that
$\sum_{\ell=0}^\infty \xi_\ell $ converges uniformly for all
large $t_0$ and therefore,
\begin{align}
\lim_{t_0\rightarrow\infty} \sum_{\ell=0}^\infty \xi_\ell =
\sum_{\ell=0}^\infty \lim_{t_0\rightarrow\infty} \xi_\ell =0.
\label{Eq:xisum}
\end{align}
In the following, we assume $t_0 > \max\{t_1, t_2,t_3,t_4,t_5,t_6\}$.\medskip

Now we are ready for the proof.
We first define two sequences $(a_\ell)_{\ell\ge 0}$ and $(u_\ell)_{\ell\ge 0}$ such that
$a_0 = 0$,  $a_\ell = \tau_1(m_\ell)-t_0$ for $\ell \ge 1$, and
$u_\ell = \tau_2(m_\ell)-t_0$ for $\ell \ge 0$.
Note that $a_{\ell+1} \le u_\ell$ for all $\ell \ge 0$ and $m_\ell < t_0+a_\ell$.
Notice also that for $a_\ell \le t < a_{\ell+1} \le u_\ell$ 
\begin{align}
(t+t_0 - m_\ell)\log (b_{m_\ell} f_{m_\ell}) \ge  \frac{1-\epsilon}\alpha
(t+t_0) \logn{t+t_0},
\label{Eq:Econd}
\end{align}
which also implies $E_0$ is an almost sure event.
For $t\ge 0$, we define
$$
A_t := \left \{ M_{m_{\ell'}} \ge \left ( b_{m_{\ell'}} f_{m_{\ell'}} \right )^{t+t_0-m_{\ell'}}
\right \},
$$
where $\ell'$ is (uniquely) determined by the condition $a_{\ell'} \le t < a_{\ell'+1}$.
Note that $A_t \subset E_t$.
Define 
$$
\tilde J := \bigcap_{m=m_0}^{t_0} \{ F_m \ge f_m/(1-\beta)\},\quad
C_0 := A_0 \cap \tilde J, \quad C_t := J_t \cap A_t \cap C_{t-1},\quad
C := \bigcap_{t=1}^\infty C_t.$$
Note that $\tilde J$ and $A_0$ are sure events and so is $C_0$.
Observe that
$$\P(C_t) = \P(J_t \vert A_t \cap C_{t-1} ) \P(A_t \vert C_{t-1}) \P(C_{t-1}).$$
Since $W_t$ is solely determined by $\Xi(t)$ and
$\alpha \log \Xi(t) \ge (1-\epsilon)(t+t_0) \logn{t+t_0}$ in the event
$A_t\cap C_{t-1}$, 
we have
$
\P(J_t \vert A_t \cap C_{t-1}) \ge 1 - \eta_t,
$
where we have used \eqref{Eq:Wdown} with 
$\alpha \log y \mapsto (1-\epsilon) (t+t_0)\logn{t+t_0}$,
$x \mapsto (1-\epsilon_1) u_n(t)$ with \eqref{Eq:Ge0}.
Therefore, we have 
$$
\P(C_t) \ge \left (\prod_{\tau=1}^t \left (1-\eta_\tau\right )
\right ) 
\prod_{\ell=0}^{\ell'}
P_\ell,\quad P_\ell:=\prod_{\tau=a_\ell}^{a_{\ell+1}-1}
\P(A_\tau \vert C_{\tau-1} ),
$$
where we have used the fact that probability cannot be larger than 1.\medskip

Let us find the lower bound of $P_\ell$. Assume $a_\ell \le \tau < a_{\ell+1}$.
Note that $A_\tau$ is independent of $J_k$ for $a_\ell \le k < \tau$ (this 
is because $m_\ell < a_\ell + t_0$)
and of $A_k$ for $k < a_\ell$ (this is because $M_m(t)$'s for different $m$'s are mutually independent branching
processes). Therefore,
$$
\P(A_\tau \vert C_{\tau-1}) = 
\P\bigg (A_\tau \Big \vert \Big (\bigcap_{k=a_\ell}^{\tau-1} A_k \Big )
\Big .\cap J_{m_\ell-t_0}\bigg ),
$$
where $J_{m_\ell-t_0}$ for $m_\ell< t_0$ should be
interpreted as $\tilde J$.
By simple algebra, we get
\begin{align*}
P_\ell &= \prod_{\tau=a_\ell}^{a_{\ell+1} -1}
\P\bigg (A_\tau \Big\vert \Big(\bigcap_{k=a_\ell}^{\tau-1} A_k \Big)
 \cap J_{m_\ell-t_0}\bigg)
= \P\bigg (  \bigcap_{\tau=a_\ell}^{a_{\ell+1}-1} A_\tau \Big\vert J_{m_\ell - t_0} \bigg )\\
&= \P\left ( M_{m_\ell} \ge (b_{m_\ell} f_{m_\ell} )^{k+t_0-m_\ell}
\text{ for all } a_\ell \le k < a_{\ell+1}-1 \vert F_{m_\ell} \ge f_{m_\ell}/(1-\beta) \right )\\
&\ge \P\left ( M_{m_\ell} \ge (b_{m_\ell} f_{m_\ell} )^{k+t_0-m_\ell}
\text{ for all }  k \ge 0 \vert F_{m_\ell} \ge f_{m_\ell}/(1-\beta)\right )
\ge 1 - \xi_\ell,
\end{align*}
where we have used \eqref{Eq:Plow} with $f \mapsto f_{m_\ell}$.
Therefore,
$$
\P(C_t) \ge \left ( \prod_{\tau=1}^t (1 - \eta_\tau) \right )
\left ( \prod_{\ell=0}^{\ell'} ( 1 - \xi_\ell ) \right )
\ge 1 - \sum_{\tau=1}^t \eta_\tau - \sum_{\ell=0}^{\ell'} \xi_\ell.
$$
By \eqref{Eq:etasum} and \eqref{Eq:xisum}, we have
$$
\lim_{t_0\rightarrow\infty} \P(C) = 1.
$$
Since $C \subset E$ and $C \subset J$, the proof is completed.
\end{proof}
\begin{lemma}[\textbf{Lower bound}]
\label{Cor:Xlow}
Almost surely on survival,
\begin{align*}
\liminf_{t\rightarrow\infty} \frac{\log X(t)}{t \logn t} \ge \frac{1}{\alpha},\quad
\liminf_{t\rightarrow\infty} \frac{W_t}{u_n(t)} \ge 1.
\end{align*}
In other words,
\begin{align*}
\P\left (\liminf_{t\rightarrow\infty} \frac{\log X(t)}{t \logn t} \ge \frac{1}{\alpha}\right )=
\P\left ( 
\liminf_{t\rightarrow\infty} \frac{W_t}{u_n(t)} \ge 1
\right ) 
= \P(\Su)
=p_s.
\end{align*}
\end{lemma}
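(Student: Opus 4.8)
The plan is to transfer the conditional lower bound of Lemma~\ref{Lem:LB} --- which holds with probability tending to $1$ from the deterministic initial configuration prepared just before that lemma --- to an arbitrary initial configuration on the survival event $\Su$, by a seeding argument, and then to read off the bound on $W_t$ from the one on $\log X(t)$. Fix $\epsilon$ and $\epsilon_1$ as in Lemma~\ref{Lem:Gx} and set $B(\epsilon):=\{\liminf_{t\to\infty}\log X(t)/(t\logn t)\ge(1-\epsilon)/\alpha\}$. Since $X(t)=0$ eventually off $\Su$ we have $B(\epsilon)\subseteq\Su$, so $\P(B(\epsilon))\le p_s$; and along a sequence $\epsilon_j\downarrow0$ the events $B(\epsilon_j)$ decrease to $\{\liminf_t\log X(t)/(t\logn t)\ge 1/\alpha\}$, so by continuity from above it suffices to prove $\P(B(\epsilon))\ge p_s$ for each (sufficiently small) $\epsilon>0$.

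To this end, fix a large $\Theta$ and let $t_0=t_0(\Theta)$ be the largest integer with $u_n(t_0)\le\Theta$; then $t_0\to\infty$ as $\Theta\to\infty$, and $t_0\ge k_0$ for $\Theta$ large. Every fitness $F_k=u_n(k)$ ($k_0\le k\le t_0$) in the configuration of Lemma~\ref{Lem:LB} satisfies $F_k\le u_n(t_0)\le\Theta$, so that configuration has total size $C_{t_0}=\sum_{k=k_0}^{t_0}\lceil f_k^{\,t_0-k}\rceil\le 2t_0\Theta^{t_0}$; let $\tau_0=\tau_0(\Theta)$ be the least integer with $(b_c(1-\beta)\Theta)^{\tau_0}\ge C_{t_0}$, which is finite (and of order $t_0$). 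On $\Su$, by Lemma~\ref{Lem:Wt} the stopping time $\rho:=\inf\{r:W_r\ge\Theta\}$ is almost surely finite. By the strong Markov property at $\rho$, the non-mutated lineage of the fittest mutant of generation $\rho$ is --- by Poisson splitting --- an autonomous Galton-Watson process stochastically dominating $\X$ with Poisson offspring of mean $(1-\beta)\Theta$; hence by Remark~\ref{Rem:bc} (with $b=b_c$, $f=(1-\beta)\Theta$), with probability at least $1-2(1-\beta)\Theta\,e^{-(1-\beta)\Theta/2}=:1-\delta_1(\Theta)$ this lineage has at least $(b_c(1-\beta)\Theta)^{\tau_0}\ge C_{t_0}$ members, all of fitness $\ge\Theta$, in generation $S:=\rho+\tau_0$. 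On that event the generation-$S$ population contains a sub-collection dominating the configuration of Lemma~\ref{Lem:LB} with parameter $t_0$ --- inject its $C_{t_0}$ individuals (fitnesses $\le\Theta$) into the lineage members (fitnesses $\ge\Theta$). Because the Poisson branching structure makes the process monotone in its initial configuration (propagate a fitness-nondecreasing injection forward: each individual independently has $\mathrm{Poisson}(F_i)$ offspring, more mutants give a stochastically fitter fittest mutant by Lemma~\ref{Th:Wa}, and $\fX\le X$ with equality in the \fm), the process from generation $S$ can be coupled to dominate, pathwise, a copy of the process of Lemma~\ref{Lem:LB}; on its event $E$, whose probability is $\ge 1-\delta_2(t_0)$ with $\delta_2(t_0)\to0$, we obtain $\log X(S+t)\ge\tfrac{1-\epsilon}{\alpha}(t+t_0)\logn{t+t_0}$ for every $t\ge1$ (using $\fX\le X$), whence $B(\epsilon)$ holds. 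Combining the two instances of the strong Markov property (at $\rho$ and at $S$) gives $\P(B(\epsilon))\ge(1-\delta_1(\Theta))(1-\delta_2(t_0(\Theta)))\,\P(\rho<\infty)\ge(1-\delta_1(\Theta))(1-\delta_2(t_0(\Theta)))\,p_s$, and letting $\Theta\to\infty$ yields $\P(B(\epsilon))\ge p_s$, as wanted.

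It remains to deduce the statement about $W_t$. Let $B:=\{\liminf_t\log X(t)/(t\logn t)\ge 1/\alpha\}$, which by the above has probability $p_s$. On $B$, for each $\epsilon>0$ one has $\log X(t)\ge\tfrac{1-\epsilon}{\alpha}t\logn t$ for all large $t$; combining \eqref{Eq:Wdown} with the first inequality of Lemma~\ref{Lem:Gx} gives, for each $t$, $\P\big(W_t<(1-\epsilon_1)u_n(t),\ X(t)\ge e^{\frac{1-\epsilon}{\alpha}t\logn t}\big)\le\exp\big(-\beta\,e^{\frac{\epsilon}{\alpha}t\logn t}\big)$, which is summable in $t$; so by Borel-Cantelli, almost surely on $B$ we have $W_t\ge(1-\epsilon_1)u_n(t)$ for all large $t$, hence $\liminf_t W_t/u_n(t)\ge1-\epsilon_1$ on $B$, and letting $\epsilon\downarrow0$ (so $\epsilon_1\downarrow0$) gives $\liminf_t W_t/u_n(t)\ge1$ on $B$. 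Since $W_t=0$ eventually off $\Su$, the two $\liminf$-events agree with $\Su$ up to null sets, which completes the proof.

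The main obstacle I anticipate is the seeding step --- in particular the observation that a \emph{single} sufficiently fit mutant already suffices: one has to check that its lineage, growing at rate $\asymp\Theta$, overtakes the total size $C_{t_0}\asymp\Theta^{t_0}$ of the triangular configuration of Lemma~\ref{Lem:LB} within $O(t_0)$ generations, which is exactly what the calibration $f_k\le u_n(t_0)\le\Theta$ (for all $k\le t_0$) forced by the choice of $t_0$ guarantees, and which keeps the waiting time $\tau_0$ finite. The remaining chores --- making the monotone coupling rigorous (harmless, because Poisson splitting removes the normalization in the parent-selection step) and bookkeeping the two uses of the strong Markov property --- are more routine.
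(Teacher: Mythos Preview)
Your argument is correct and follows the same strategy as the paper: on $\Su$, wait for a sufficiently fit mutant, let its non-mutated lineage grow until it exceeds the total size of the triangular initial configuration of Lemma~\ref{Lem:LB}, then couple to dominate that auxiliary process. The paper's proof is terser --- it simply asserts the existence of $\tau$ and $k$ with $W_\tau$ large and $N>\fX(0)$ and invokes the Markov property --- whereas you quantify the seeding probability via Remark~\ref{Rem:bc} and make the calibration $u_n(t_0)\le\Theta$ explicit; this is arguably cleaner, since your threshold $\Theta\ge u_n(t_0)$ genuinely dominates every fitness in the initial configuration. The one substantive variation is your treatment of $W_t$: the paper transports the event $J$ of Lemma~\ref{Lem:LB} through the same coupling, while you instead deduce the $W_t$ bound directly from the established lower bound on $\log X(t)$ via~\eqref{Eq:Wdown}, Lemma~\ref{Lem:Gx} and Borel--Cantelli. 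Both routes work; yours is more modular and avoids tracking the mutant fitness through the coupling.
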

\begin{proof}
Fix $\epsilon$ and $\epsilon_1$ as in Lemma~\ref{Lem:Gx}.
For any $0<\epsilon'$, Lemma~\ref{Lem:LB} implies the existence of $t_0$ such that
\begin{align*}
\P\left (
\log \fX(t)
\ge
\frac{1-\epsilon}{\alpha }(t+t_0)\logn{t+t_0}
\text{ for all } t\ge 0 \right ) \ge 1- \epsilon',\\
\P\left (
W_t
\ge
(1-\epsilon_1) u_n(t+t_0)
\text{ for all } t\ge 0 \right )\ge 1- \epsilon'.
\end{align*}
Since $W_t$ as well as $X(t)$ is unbounded on survival (Lemma~\ref{Lem:Wt}),
there should be $\tau$ and $k\ge 1$ almost surely on survival such that $W_\tau > 
(1-\epsilon_1)u_n(t_0)$
and $N > \fX(0)$, where $N$ is the number of individual with
fitness $W_\tau$ at generation $\tau+k$.
Now couple $X(t+\tau+k)$ 
with $\fX(t)$, which gives $X(t+\tau+k) \ge \fX(t)$ for all $t\ge 0$.
We denote the event that has such $\tau$ and $k$ by $D$. Note that 
$\P(D \cap \Su)=p_s$ by Lemma~\ref{Lem:Wt} and, obviously, $\P(D) \ge p_s$.
Therefore,
\begin{align*}
p_s&\ge
\P\left ( 
\liminf_{t\rightarrow\infty} \frac{\log X(t)}{t \logn t} \ge \frac{1-\epsilon}{\alpha}
\right ) \\
&\ge
\P\left ( 
\left . \liminf_{t\rightarrow\infty} \frac{\log X(t)}{t \logn t} \ge \frac{1-\epsilon}{\alpha}
\right \vert D\right ) \P(D)\\
&\ge
\P\left (
\log \fX(t)
\ge
\frac{1-\epsilon}{\alpha}(t+t_0)\logn{t+t_0}
\text{ for all } t\ge 0 \right )\P(D)
\ge (1-\epsilon') p_s,
\end{align*}
where we have used the Markov property.
By the same token, we have
\begin{align*}
p_s\ge
\P\left ( \liminf_{t\rightarrow\infty} \frac{W_t}{u_n(t)} \ge 1-\epsilon_1
\right ) \ge (1-\epsilon')p_s.
\end{align*}
Since $\epsilon'$ and $\epsilon$ are arbitrary, the proof is completed.
\end{proof}
By Lemma~\ref{Cor:Xup} and Lemma~\ref{Cor:Xlow}, Theorem~\ref{Th:mainthm} is proved.

\section{\label{Sec:typeii} Proof of Theorem~\ref{Th:mainII}}
This section presents two lemmas, which will prove Theorem~\ref{Th:mainII}.
Needless to say, $G$ is always of type II throughout this section.
For convenience, we define
\begin{align*}
\chi(t,n,\nu) &:= \begin{cases}
t^\nu, &n=1,\\
\exp (t^\nu),& n = 2,\\
\exp\Big( t\big(  \lo{n-2}t \big)^{-\nu} \Big), & n\ge 3,
\end{cases}\\ 
\uii(t,n,\nu,a) &:= \chi(t,n,\nu) \left ( \lo{\max\{0,n-2\}}{t} \right )^{-a},
\quad \tG(x,n,a):= \log x \left ( \lo n x \right )^a,
\end{align*}
with an appropriate domain.
Again, the distinction between the \mm and the \fm does not play any role in the
proof of Theorem~\ref{Th:mainII}.
\begin{lemma}[\textbf{Variation of Lemma~\ref{Lem:Xup}}]
\label{Lem:Xuptype22}
Assume $X(0) < \infty$ and $Q_0< \infty$, fix $\epsilon>0$ and~let 
\begin{align*}
\nu_n &:= 
\begin{cases}
(1+2 \epsilon)(1+\alpha)/\alpha,&n=1,\\
\epsilon+1/(1+\alpha ), & n=2,\\
\alpha/(1+\epsilon)^2, & n \ge 3,
\end{cases}\quad
a_n:=
\begin{cases}
1+\epsilon,&n=1,\\
\alpha/(1+\alpha),&n=2,\\
\alpha/(1+\epsilon),&n\ge 3.
\end{cases}
\end{align*}
Then
\begin{align*}
\lim_{m\rightarrow\infty} \P\left ( \log \Xi(t)  \le 
\chi(t+m,n,\nu_n),\,
\log W_t \le \uii(t+m,n,\nu_n,a_n)
\text{ for all } t\right ) 
&= 1.
\end{align*}
\end{lemma}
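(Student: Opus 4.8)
The plan is to run the argument of Lemma~\ref{Lem:Xup} with the type~I scales $\tfrac1\alpha t\logn t$ and $u_n(t)$ replaced by $\chi(t+m,n,\nu_n)$ and $\uii(t+m,n,\nu_n,a_n)$. So I would fix $\epsilon>0$, write $\tau_t:=t+m$, and set, for $t\ge 0$, $A_t:=\{\log\Xi(t)\le\chi(\tau_t,n,\nu_n)\}$, $E_t:=\{\log W_t\le\uii(\tau_t,n,\nu_n,a_n)\}$ and $E_t':=\{\log Q_t\le\uii(\tau_t,n,\nu_n,a_n)\}$, together with $D_0:=A_0\cap E_0$, $D_t:=A_t\cap E_t\cap D_{t-1}$ and $D:=\bigcap_{t\ge0}D_t$. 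Exactly as in Lemma~\ref{Lem:Xup}, since $\uii(\cdot,n,\nu_n,a_n)$ is eventually increasing and $Q_t\le\max\{Q_0,W_1,\dots,W_t\}$, one gets $\bigcap_{k\le t}E_k=\bigcap_{k\le t}E_k'$ for $m$ large (using $W_0=Q_0$), so $D$ is contained in the event of the statement and it suffices to prove $\P(D)\to1$ as $m\to\infty$. For $m$ large the finitely many initialisation conditions — $\uii(m,n,\nu_n,a_n)\ge1$, $\chi(m,n,\nu_n)\ge\log X(0)$, $\exp(\uii(m,n,\nu_n,a_n))\ge Q_0$, and the two asymptotic estimates below being valid from $m$ on — are all met.

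The two deterministic facts that make the induction close, and the only place the exact values of $\nu_n,a_n$ enter, are: \textbf{(I)} $\chi(t+1,n,\nu_n)-\chi(t,n,\nu_n)\ge\uii(t,n,\nu_n,a_n)+2$ for $t$ large; and \textbf{(II)} $\log\big(1/G(\exp(\uii(t,n,\nu_n,a_n)))\big)\ge 2\,\chi(t,n,\nu_n)$ for $t$ large. For \textbf{(I)} I would write $\chi(t,n,\nu_n)=\exp(\phi(t))$ with $\phi(t)=t^{\nu_n}$ for $n=2$ and $\phi(t)=t(\lo{n-2}t)^{-\nu_n}$ for $n\ge3$, use $\phi(t+1)-\phi(t)\asymp\phi'(t)$ to see that the left-hand side is of order $\chi(t,n,\nu_n)\,\phi'(t)$, note that $\uii(t,n,\nu_n,a_n)=\chi(t,n,\nu_n)(\lo{\max\{0,n-2\}}t)^{-a_n}$, and check that the quotient grows like a positive power of $t$ (for $n=2$, exponent $\epsilon$) or of $\lo{n-2}t$ (for $n\ge3$, exponent $\alpha\epsilon/(1+\epsilon)^2$); the case $n=1$, where $\chi(t,1,\nu_1)=t^{\nu_1}$ with $\nu_1>1$ and $\uii(t,1,\nu_1,a_1)=t^{\nu_1-a_1}$, is handled directly by convexity of $t\mapsto t^{\nu_1}$ and yields a factor $t^{\epsilon}$. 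For \textbf{(II)} I would invoke \eqref{Eq:Glogn}: with $\uii:=\uii(t,n,\nu_n,a_n)$ the left-hand side is asymptotic to $\uii\cdot\Gi(\lo{n-1}\uii)=\uii\,(\lo{n-1}\uii)^\alpha L(\lo{n-1}\uii)$, and computing $\lo{n-1}\uii$ — it equals $\uii$ for $n=1$, is $\sim\log\uii\sim t^{\nu_2}$ for $n=2$, and is $\sim\lo{n-2}t$ for $n\ge3$ — and using {\bf (A2)} to replace the argument of $L$ by this leading term, one finds the left-hand side exceeds $\chi(t,n,\nu_n)$ by a factor that is a positive power of $t$ (exponent $\epsilon(1+\alpha)$ for $n=1$, $\alpha\epsilon$ for $n=2$) or of $\lo{n-2}t$ (exponent $\alpha\epsilon/(1+\epsilon)$ for $n\ge3$), whence the bound $\ge 2\chi(t,n,\nu_n)$ eventually.

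Given \textbf{(I)} and \textbf{(II)}, the probabilistic part is a transcription of the proof of Lemma~\ref{Lem:Xup}. The generation-$k$ offspring count has conditional mean $X(k-1)S_{k-1}\le\Xi(k-1)Q_{k-1}$, so on $A_{k-1}\cap E_{k-1}'$ it is at most $\exp\big(\chi(\tau_{k-1},n,\nu_n)+\uii(\tau_{k-1},n,\nu_n,a_n)\big)$; feeding this into \eqref{Eq:Pup} as $xf$, with $B=\exp\big(\chi(\tau_k,n,\nu_n)-\chi(\tau_{k-1},n,\nu_n)-\uii(\tau_{k-1},n,\nu_n,a_n)\big)\ge e^2$ by \textbf{(I)}, and using Lemma~\ref{Lem:Markov} together with $\bigcap_{j\le k-1}E_j=\bigcap_{j\le k-1}E_j'$, yields $\P(A_k\mid D_{k-1})\ge1-\xi_k$ with $\xi_k:=2\exp\big(-\exp(\chi(\tau_{k-1},n,\nu_n)+\uii(\tau_{k-1},n,\nu_n,a_n))\big)$. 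Since $W_k$ is a function of $\Xi(k)$, Lemma~\ref{Th:Wa} and \eqref{Eq:Wup} give, pointwise on $A_k$, $\P(E_k\mid A_k\cap D_{k-1})\ge1-\beta\exp\big(\chi(\tau_k,n,\nu_n)\big)G\big(\exp(\uii(\tau_k,n,\nu_n,a_n))\big)\ge1-\eta_k$ with $\eta_k:=\beta\exp(-\chi(\tau_k,n,\nu_n))$ by \textbf{(II)}. Then $\P(D)\ge\prod_{k\ge1}(1-\xi_k)(1-\eta_k)\ge1-\sum_{k\ge1}(\xi_k+\eta_k)$, and since $\chi(\cdot,n,\nu_n)$ is increasing and unbounded, $\sum_{k\ge1}(\xi_k+\eta_k)\le C\sum_{j>m}\exp(-c\,\chi(j,n,\nu_n))\to0$ as $m\to\infty$, which completes the proof.

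The part I expect to be the real work is establishing \textbf{(I)} and \textbf{(II)}: the pair $(\nu_n,a_n)$ is forced, up to the $\epsilon$-slack, by simultaneously demanding that $\chi$ grow fast enough to absorb one increment of $\log Q_t$ and that $G(\exp(\uii))$ decay fast enough to beat $\exp(\chi)$, and verifying both needs careful bookkeeping with iterated logarithms, their derivatives and the regularity assumptions {\bf (A2)}--{\bf (A4)} on $L$, carried out separately in the three regimes $n=1$, $n=2$ and $n\ge3$. The probabilistic skeleton, by contrast, carries over from Lemma~\ref{Lem:Xup} with essentially no change.
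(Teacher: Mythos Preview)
Your proposal is correct and follows essentially the same route as the paper: the paper defines exactly your events $A_t,E_t,E_t',D_t$, proves your deterministic fact \textbf{(I)} (their $H(x)>2$, via the mean value theorem on $\partial\chi/\partial x$) and your \textbf{(II)} (their \eqref{Eq:Gup2}), and then runs the identical $\xi_k,\eta_k$ induction. The only cosmetic difference is that for \textbf{(II)} the paper replaces $\Gii(x)=(\lo n x)^\alpha L(\lo n x)$ by the cruder lower bound $(\lo n x)^{\alpha/(1+\epsilon_0)}$ via {\bf (A1)} and works with $\tG(x,n,\alpha/(1+\epsilon_0))$, whereas you keep $L$ and argue it is negligible; both verifications yield the same three exponent inequalities you identified.
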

\begin{proof}
We first make a precise criterion as to the meaning of large $m$.
Obviously,
there is $m_1$ such that $\chi(m,n,\nu_n) \ge \log X(0)$
and $\uii(m,n,\nu_n,a_n) \ge \log Q_0$ for all $m > m_1$.
Let 
$
H(x):= 
\chi(x+1,n,\nu_n)
-\chi(x,n,\nu_n)
- \uii(x,n,\nu_n,a_n)$.
By the mean value theorem, there is $x_0$ ($x \le x_0 \le x+1$)
such that
\begin{align*}
&\chi(x+1,n,\nu_n) - \chi(x,n,\nu_n)
= \left . \frac{\partial \chi(x,n,\nu_n)}{\partial x} 
\right \vert_{x=x_0}\\
&=\uii(x_0,n,\nu_n,a_n) 
\times 
\begin{cases}
\nu_n x_0^{\epsilon},&n\le 2,\\
\displaystyle
\left ( \lo{n-2}{x_0} \right )^{\epsilon \nu_n}
\left ( 1- \nu_n \prod_{k=1}^{n-2} \frac{1}{\lo k {x_0}} \right ), & n\ge 3,\end{cases}
\end{align*}
which gives
$\lim_{x\rightarrow\infty} H(x) = \infty$.
Therefore, there is $m_2$ such that $H(x) > 2$ for all $x > m_2$.
Let 
$\epsilon_0 = 
\epsilon/(1+\epsilon).
$
By definition, there is $m_3$ such that 
$\log G(x) \le - \tG(x,n,\alpha/(1+\epsilon_0))$ for all $x>m_3$.
Since $(\nu_1-a_1)\alpha > a_1 (1+\epsilon_0)$,
$\nu_2 \alpha > a_2 (1+\epsilon_0)$, and $\alpha > a_n(1+\epsilon_0)$ for $n \ge 3$,
we have
\begin{align*}
&\lim_{t\rightarrow\infty}\frac{\tG\left ( \exp\left ( \uii(t,n,\nu_n,a_n) \right ),n,\alpha/(1+\epsilon_0)\right )}{\chi(t,n,\nu_n)}\\
&=\lim_{t\rightarrow\infty} \left ( \lo{\max\{0,n-2\}}{t} \right )^{-a_n} \left (\lo{n-1}{\uii(t,n,\nu_n,a_n)}\right )^{\alpha/(1+\epsilon_0)}=\infty.
\end{align*}
Therefore, there is $m_4$ such that
$\tG\left ( \exp\left ( \uii(t,n,\nu_n,a_n) \right ),n,\alpha/(1+\epsilon_0)\right ) 
\ge 2 \chi(t,n,\nu_n)$ and, accordingly,
\begin{align}
G\left ( \exp \left ( \uii(t,n,\nu_n,a_n)\right ) \right ) &\le 
e^{- 2 \chi(t,n,\nu_n)},
\label{Eq:Gup2}
\end{align}
for all $t > \max\{m_3,m_4\}$.
We set $m_0 = \max\{m_1,m_2,m_3,m_4\}$ and
we assume $m > m_0$ in what follows.
For given $m$, we define $\tau_t := t+m$ and
\begin{align*}
E_t &:=\left  \{ \log W_t \le \uii(\tau_t,n,\nu_n,a_n) \right \},\quad
E_t':=\left  \{ \log Q_t \le \uii(\tau_t,n,\nu_n,a_n) \right \},\\
A_t &:=\left \{\log \Xi(t) \le \chi(\tau_t,n,\nu_n )\right \},\quad
A=\bigcap_{k=1}^\infty A_k.
\end{align*}
We can repeat \eqref{Eq:freeE} for $E_t$ and $E_t'$.
\begin{align}
\bigcap_{k=0}^t E_k
=\bigcap_{k=0}^t E_k'.
\label{Eq:freeE2}
\end{align}
We also define, for $t\ge 1$,
$
D_0 = A_0 \cap E_0,$ 
$D_t = A_t \cap E_t \cap D_{t-1},$ and
$D = \bigcap_{k=1}^\infty D_k.
$
Observe that
$\P(D_t) = \P(E_t \vert A_t \cap D_{t-1} ) \P(A_t \vert D_{t-1} )\P(D_{t-1})$.
Using Lemma~\ref{Lem:Markov} and \eqref{Eq:freeE2}, we have
$$
\P(A_k \vert D_{k-1})
= \P(A_k \vert A_{k-1}\cap E'_{k-1})
\ge 1 - 2 \exp\left (-e^{\uii(\tau_{k-1},n,\nu_n,a_n)+\chi(\tau_{k-1},n,\nu_n)
}\right )=:1- \xi_k,
$$
where we have used \eqref{Eq:Pup} with
$f \mapsto e^{\uii(\tau_{k-1},n,\nu_n,a_n)}$,
$x \mapsto e^{\chi(\tau_{k-1},n,\nu_n)}$, and
$B \mapsto e^{H(\tau_{k-1})} \ge e^2$.
Since $W_t$ is purely determined by $\Xi(t)$, we have
\begin{align*}
\P(E_t \vert  A_t \cap D_{t-1}) = \P(E_t \vert  A_t)
&\ge 1-\beta\exp \left ( - 
\chi(\tau_k,n,\nu_n,a_n)\right )
=: 1-\eta_k,
\end{align*}
where we have used \eqref{Eq:Wup} with
$y \mapsto e^{\chi(\tau_k,n,\nu_n)}$, $x \mapsto e^{\uii(\tau_k,n,\nu_n,a_n)}$,
and \eqref{Eq:Gup2}.
Therefore, we have
\begin{align}
\P(D) \ge \prod_{k=1}^\infty (1-\xi_k)(1-\eta_k)
\ge 1 - \sum_{k=1}^\infty (\xi_k + \eta_k).
\label{Eq:Dser2}
\end{align}
Since $\lim_{k\rightarrow\infty} (\xi_k + \eta_k) \tau_k^2 =0$ and
$\tau_k^{-2} < k^{-2}$, 
the series in \eqref{Eq:Dser2} converges uniformly for large $m$.
Since $\lim_{m\rightarrow\infty} (\xi_k+\eta_k) = 0$ for all $k$, 
we have
$
\lim_{m\rightarrow\infty} \P(D) = 1$,
which completes the proof.
\end{proof}

\noindent
{\bf Definition (Initial condition for Lemma~\ref{Lem:LB22}).}
Fix $0<\epsilon<1/\alpha$ and let 
\begin{align*}
\nu_n&:= 
\begin{cases}
(1+\alpha)/[\alpha(1+2\epsilon)] , & n=1,\\
1/ [(1+\alpha)(1+2\epsilon)], & n =2,\\
\alpha (1+3\epsilon), &n\ge 3,
\end{cases}
\quad
a_n :=
\begin{cases}
(1+\alpha)/(1+\alpha + \alpha \epsilon), & n= 1,\\
\alpha/(1+\alpha), & n= 2,\\
\alpha  (1 + 2\epsilon), & n\ge 3,
\end{cases}\quad
\end{align*}
which should not be confused with $\nu_n$ and $a_n$ defined in Lemma~\ref{Lem:Xuptype22}.
Note that $\nu_1 > a_1$ because $\epsilon<1/\alpha$.
Define
\begin{align*}
f_k&:=(1-\beta) \exp\left ( \uii(k,n,\nu_n,a_n)\right ),\quad
b_k:=\frac1{1-\beta} \exp\left ( - \frac{\epsilon}{1+\epsilon}\uii(k,n,\nu_n,a_n) \right ),\\
f_k b_k &= \exp\left ( \frac{\uii(k,n,\nu_n,a_n)}{1+\epsilon} \right ).
\end{align*}
Once $k_0$ is determined as in Lemma~\ref{Lem:k0II}, we define the initial condition with an integer $t_0$ larger than $k_0$ in exactly the same way as in the previous section.
We use $M_k(t)$, $F_k$, $F_{k+t_0}$, and $\fX(t)$ with an appropriate modification
of the meaning.

\begin{lemma}
\label{Lem:k0II}
For $\epsilon$, $\nu_n$, $a_n$, $b_k$, $f_k$ defined above
there is an integer $k_0$, which is larger than $\en{n}0$, such that 
for all $m \ge k_0$ we have
\begin{description}
\item[(Condition 1)] $0<b_{m} < b_c$, $(1-b_m + b_m \log b_m) f_m > 1$, and $b_m f_m > e$;
\item[(Condition 2)] 
$G\left ( \exp(\uii(m,n,\nu_n,a_n)) \right ) \ge \exp \left ( -\frac{1}{2} \chi(m,n,\nu_n)\right )$.
\end{description}
\end{lemma}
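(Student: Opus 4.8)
The plan is to dispatch the two conditions in turn, following the scheme of the proof of Lemma~\ref{Lem:k0I}, but reading off (\textbf{Condition 2}) directly from the type~II tail asymptotics \eqref{Eq:Glogn} rather than through an auxiliary function.

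The first step, which takes care of (\textbf{Condition 1}), is to record that $\uii(m,n,\nu_n,a_n)\to\infty$ as $m\to\infty$: for $n\ge2$ this is clear because $\chi(m,n,\nu_n)\to\infty$ faster than any power of an iterated logarithm while it is divided only by the factor $(\lo{\max\{0,n-2\}}{m})^{a_n}$, and for $n=1$ it holds because $\uii(m,1,\nu_1,a_1)=m^{\nu_1-a_1}$ with $\nu_1>a_1$ (here $\epsilon<1/\alpha$ is used, as already noted). Consequently $f_m=(1-\beta)\exp(\uii(m,n,\nu_n,a_n))\to\infty$, $b_m=(1-\beta)^{-1}\exp(-\tfrac{\epsilon}{1+\epsilon}\uii(m,n,\nu_n,a_n))\to0$ and $b_mf_m=\exp(\uii(m,n,\nu_n,a_n)/(1+\epsilon))\to\infty$; since $1-b_m+b_m\log b_m\to1$, this also forces $(1-b_m+b_m\log b_m)f_m\to\infty$. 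Hence there is an integer $k_1>\en{n}{0}$ such that $0<b_m<b_c$, $(1-b_m+b_m\log b_m)f_m>1$ and $b_mf_m>e$ for all $m\ge k_1$.

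For (\textbf{Condition 2}) I would fix an exponent $\alpha'$ with $\alpha<\alpha'<\alpha(1+2\epsilon)$. Combining \eqref{Eq:Glogn} with \eqref{Eq:conomega} — bounding $L(\lo{n}{x})\le(\lo{n}{x})^{\alpha'-\alpha}$ for large $x$, just as in the proof of Lemma~\ref{Lem:Xuptype22} but with the inequality reversed and the exponent picked slightly above rather than below $\alpha$ — one gets $x_0$ with $\log(1/G(x))\le\tG(x,n,\alpha')$ for all $x\ge x_0$. Evaluating this at $x=x_m:=\exp(\uii(m,n,\nu_n,a_n))$, so that $\log x_m=\uii(m,n,\nu_n,a_n)=\chi(m,n,\nu_n)(\lo{\max\{0,n-2\}}{m})^{-a_n}$ and $\lo{n}{x_m}=\lo{n-1}{\uii(m,n,\nu_n,a_n)}$, it suffices to show
\begin{align*}
\frac{\big(\lo{n-1}{\uii(m,n,\nu_n,a_n)}\big)^{\alpha'}}{(\lo{\max\{0,n-2\}}{m})^{a_n}}\longrightarrow 0\qquad\text{as }m\to\infty,
\end{align*}
since then $\tG(x_m,n,\alpha')$ equals $\chi(m,n,\nu_n)$ times this ratio and is therefore $\le\tfrac12\chi(m,n,\nu_n)$ for all large $m$, which is precisely (\textbf{Condition 2}). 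This limit I would verify case by case, using the choices of $\nu_n,a_n$ and $\epsilon<1/\alpha$. For $n=1$, $\uii(m,1,\nu_1,a_1)=m^{\nu_1-a_1}$ and the ratio is $m^{(\nu_1-a_1)\alpha'-a_1}$; since $(\nu_1-a_1)\alpha<a_1$ — equivalently $1-\alpha\epsilon<1+2\epsilon$ — the exponent is negative once $\alpha'$ is close enough to $\alpha$. For $n=2$, $\log\uii(m,2,\nu_2,a_2)=m^{\nu_2}-a_2\log m\sim m^{\nu_2}$, so $\lo{1}{\uii(m,2,\nu_2,a_2)}\sim m^{\nu_2}$ and the ratio behaves like $m^{\nu_2\alpha'-a_2}$, with $\nu_2\alpha(1+2\epsilon)=a_2$ giving a negative exponent. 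For $n\ge3$, $\log\uii(m,n,\nu_n,a_n)=m(\lo{n-2}{m})^{-\nu_n}-a_n\lo{n-1}{m}\sim m(\lo{n-2}{m})^{-\nu_n}$, and applying $n-2$ further logarithms (the subtracted iterated-log term being of lower order at each step) gives $\lo{n-1}{\uii(m,n,\nu_n,a_n)}\sim\lo{n-2}{m}$, so the ratio behaves like $(\lo{n-2}{m})^{\alpha'-a_n}$ with $\alpha'<\alpha(1+2\epsilon)=a_n$. In every case the ratio tends to $0$, so there is an integer $k_2>\en{n}{0}$ with (\textbf{Condition 2}) valid for all $m\ge k_2$.

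Setting $k_0:=\max\{k_1,k_2\}$ then completes the proof. The one genuinely delicate point is the asymptotic identification $\lo{n-1}{\uii(m,n,\nu_n,a_n)}\sim\lo{n-2}{m}$ for $n\ge3$: one must check that after the first logarithm the factor $(\lo{n-2}{m})^{-\nu_n}$ (and the subtracted term $a_n\lo{n-1}{m}$) disappears against $\log m$, and that the subsequent logarithms then merely peel off one iterated logarithm each, so that what remains is negligible against the power $(\lo{n-2}{m})^{a_n}$ in the denominator — the same iterated-logarithm bookkeeping already used in Section~\ref{Sec:XW}.
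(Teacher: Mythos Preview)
Your proof is correct and follows essentially the same route as the paper: (\textbf{Condition 1}) is handled by the observation that $\uii(m,n,\nu_n,a_n)\to\infty$, and (\textbf{Condition 2}) is verified by bounding $-\log G(x)\le\tG(x,n,\alpha')$ and checking case by case that the ratio $(\lo{\max\{0,n-2\}}{m})^{-a_n}(\lo{n-1}{\uii})^{\alpha'}\to0$. The only cosmetic difference is that the paper fixes the exponent $\alpha(1+\epsilon)$ from the outset, whereas you leave $\alpha'\in(\alpha,\alpha(1+2\epsilon))$ as a free parameter; in fact your range works uniformly in $n$ (for $n=1$ one has $(\nu_1-a_1)\alpha(1+2\epsilon)=(1-\alpha\epsilon)a_1<a_1$), so ``close enough to $\alpha$'' is more caution than needed.
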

\begin{proof}
Obviously, there is an integer $k_1$ that satisfies (\textbf{Condition 1}) for all
$m \ge k_0$.
By definition, we have $\log G(x) \ge -\tG(x,n,\alpha(1+\epsilon))$
for all sufficiently large $x$. 
Since $(\nu_1 - a_1) \alpha (1+\epsilon) < a_1$, $\nu_2 \alpha (1+\epsilon)<a_2$,
and $a_n > \alpha(1+\epsilon)$ for $n\ge 3$,
we have 
\begin{align*}
&\lim_{y\rightarrow\infty}\frac{\tG(\exp(\uii(y,n,\nu_n,a_n)))}{\chi(y,n,\nu_n)}\\
&=\lim_{y\rightarrow\infty} \left ( \lo{\max\{0,n-2\}}{y} \right )^{-a_n} \left (\lo{n-1}{\uii(y,n,\nu_n,a_n)}\right )^{\alpha(1+\epsilon)}=0,
\end{align*}
which guarantees the existence of an integer $k_2$ such that
\begin{align}
\log G\left ( \exp(\uii(y,n,\nu_n,a_n)) \right ) \ge -\frac{1}{2} \chi(y,n,\nu_n)
\label{Eq:Ge02}
\end{align}
for all $y \ge k_2$. Now we set $k_0 := \max\{k_1,k_2\}$, which completes the proof.
\end{proof}
\begin{lemma}[\textbf{Variation of Lemma~\ref{Lem:LB}}]
\label{Lem:LB22}
For the initial conditions defined above
with $t_0 \ge k_0$, we define two events
\begin{align*}
E_t&:=\{\log \fX(t) \ge \chi(t+t_0,n,\nu_n)\},\quad
E:=\bigcap_{t=1}^\infty E_t,\\
J_t&:=\{\log W_{t} \ge \uii(t+t_0,n,\nu_n,a_n)\},\quad
J:=\bigcap_{t=1}^\infty J_t.
\end{align*}
Then,
$$
\lim_{t_0\rightarrow\infty}
\P\left ( E \right )=
\lim_{t_0\rightarrow\infty}
\P\left ( J \right )=1.
$$
\end{lemma}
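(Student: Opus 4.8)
The plan is to run the proof of Lemma~\ref{Lem:LB} with the type~II growth functions $\chi(t,n,\nu_n)$ and $\exp(\uii(t,n,\nu_n,a_n))$ in place of $\tfrac{1-\epsilon}{\alpha}t\logn t$ and $u_n(t)$, with the present $f_k=(1-\beta)\exp(\uii(k,n,\nu_n,a_n))$, and with the bound $G(\exp(\uii(y,n,\nu_n,a_n)))\ge\exp(-\tfrac12\chi(y,n,\nu_n))$ from inequality~\eqref{Eq:Ge02} of Lemma~\ref{Lem:k0II} replacing the use of Lemma~\ref{Lem:Gx}. As there, no single mutant family can carry the population, so one fixes a subsequence $(m_\ell)_{\ell\ge0}$ of driving indices together with a growth window $[\tau_1(m),\tau_2(m)]$ for each $m$. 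Concretely, set $H(m,x):=(x-m)\log(b_mf_m)=\tfrac1{1+\epsilon}(x-m)\,\uii(m,n,\nu_n,a_n)$ and $h(m,x):=H(m,x)-\chi(x,n,\nu_n)$, so that $h(m,x)\ge0$ is precisely $(b_mf_m)^{x-m}\ge e^{\chi(x,n,\nu_n)}$; let $\tau_1(m)$ be the least $x>m$ with $h(m,x)\ge0$ and $\tau_2(m)$ the greatest such $x$, with $\tau_2(m)=\infty$ in the sub-exponential case (only $n=1$ with $\nu_1\le1$, in which case the construction degenerates to a single family). Because $\tau_1(m)-m$ is of strictly smaller order in $m$ than $\tau_2(m)-m$ in each remaining regime, consecutive integer windows overlap for large $m$; one then takes $m_0$ comparable to (and below) $t_0$ with $t_0\in[\tau_1(m_0),\tau_2(m_0)]$ and $(m_\ell)$ increasing with $\tau_1(m_{\ell+1})\le\tau_2(m_\ell)$, so that $[t_0,\infty)$ is tiled by the windows, $m_\ell\ge m_0+\ell$, and $m_0\to\infty$ as $t_0\to\infty$.

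The thresholds on $t_0$ mirror $t_1,\dots,t_6$ of Lemma~\ref{Lem:LB}: for $t_0$ large we need $k_0<m_0<t_0$, $\chi(m_0,n,\nu_n)\ge\log X(0)$ and $\uii(m_0,n,\nu_n,a_n)\ge\log Q_0$, so that $E_0$ (via $\fX(0)\ge\lceil f_{m_0}^{\,t_0-m_0}\rceil\ge(b_{m_0}f_{m_0})^{t_0-m_0}$ and $h(m_0,t_0)\ge0$), $\tilde J$ and $A_0$ are sure events; the overlap $\tau_1(m_{\ell+1})\le\tau_2(m_\ell)$ for all $\ell$; the bound \eqref{Eq:Ge02} at the argument $\exp(\uii(t+t_0,n,\nu_n,a_n))$ for all $t\ge0$; and uniform summability in $t_0$, with vanishing limit as $t_0\to\infty$, of the two series with terms $\eta_t:=\exp(-\beta\exp(\tfrac12\chi(t+t_0,n,\nu_n)))$ and $\xi_\ell:=2f_{m_\ell}\exp(-\tfrac12f_{m_\ell})$. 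The last point is dominated convergence: $\chi(t+t_0,n,\nu_n)\ge\chi(t,n,\nu_n)$ for $t_0$ large yields a fixed summable majorant for $\eta_t$, and $f_{m_\ell}\ge(1-\beta)\exp(\uii(k_0+\ell,n,\nu_n,a_n))$ (using $m_\ell\ge k_0+\ell$ and monotonicity of $\uii$) one for $\xi_\ell$, while each term tends to $0$ as $m_0\to\infty$.

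The probabilistic core is then verbatim from Lemma~\ref{Lem:LB}. Put $a_0:=0$, $a_\ell:=\tau_1(m_\ell)-t_0$ for $\ell\ge1$; for $t\ge0$ let $\ell(t)$ be the index with $a_{\ell(t)}\le t<a_{\ell(t)+1}$ and set $A_t:=\{M_{m_{\ell(t)}}(t)\ge(b_{m_{\ell(t)}}f_{m_{\ell(t)}})^{t+t_0-m_{\ell(t)}}\}$, so on $A_t$ one has $\log\fX(t)\ge H(m_{\ell(t)},t+t_0)\ge\chi(t+t_0,n,\nu_n)$, i.e.\ $A_t\subseteq E_t$. With $\tilde J:=\bigcap_{m=m_0}^{t_0}\{F_m\ge f_m/(1-\beta)\}$, $C_0:=A_0\cap\tilde J$, $C_t:=J_t\cap A_t\cap C_{t-1}$ and $C:=\bigcap_{t\ge1}C_t$ we get $C\subseteq E\cap J$. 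Factor $\P(C_t)=\P(J_t\mid A_t\cap C_{t-1})\,\P(A_t\mid C_{t-1})\,\P(C_{t-1})$. Since $W_t$ is determined by $\Xi(t)$ and $\Xi(t)\ge X(t)\ge\fX(t)\ge e^{\chi(t+t_0,n,\nu_n)}$ on $A_t\cap C_{t-1}$, inequality~\eqref{Eq:Wdown} with $y=e^{\chi(t+t_0,n,\nu_n)}$, $x=e^{\uii(t+t_0,n,\nu_n,a_n)}$ together with \eqref{Eq:Ge02} give $\P(J_t\mid A_t\cap C_{t-1})\ge1-\eta_t$. For the middle factor, exactly as in Lemma~\ref{Lem:LB}: for $\tau$ in the $\ell$-th window $A_\tau$ is independent of $J_k$ for $a_\ell\le k<\tau$ (because $m_\ell-t_0<a_\ell$) and of $A_k$ for $k<a_\ell$ (the families are mutually independent Poisson Galton--Watson processes), so $\P(A_\tau\mid C_{\tau-1})=\P\big(A_\tau\mid(\bigcap_{k=a_\ell}^{\tau-1}A_k)\cap J_{m_\ell-t_0}\big)$ (with $J_{m_\ell-t_0}$ read as $\tilde J$ when $m_\ell\le t_0$); multiplying over the window and then dropping to the event that $M_{m_\ell}$ satisfies the bound in every generation it exists, the product is $\ge1-\xi_\ell$ by Lemma~\ref{Lem:nPoi} in the form~\eqref{Eq:Plow} applied to $M_{m_\ell}(\cdot)$, a Poisson Galton--Watson process of mean $f_{m_\ell}$ whose parameters obey \textbf{(Condition 1)} of Lemma~\ref{Lem:k0II} since $m_\ell\ge k_0$. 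Hence $\P(C_t)\ge\prod_{\tau=1}^t(1-\eta_\tau)\prod_{\ell=0}^{\ell(t)}(1-\xi_\ell)\ge1-\sum_\tau\eta_\tau-\sum_\ell\xi_\ell$; letting $t\to\infty$ and then $t_0\to\infty$ gives $\lim_{t_0\to\infty}\P(C)=1$, and $C\subseteq E$, $C\subseteq J$ finish the proof.

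The only genuinely new work, and where I expect the difficulty, is the window construction and the verification of its properties separately in the three regimes $\chi(t,1,\nu_1)=t^{\nu_1}$, $\chi(t,2,\nu_2)=e^{t^{\nu_2}}$ and $\chi(t,n,\nu_n)=\exp(t(\lo{n-2}t)^{-\nu_n})$ for $n\ge3$: one must check each window is nonempty with length tending to infinity, that $\tau_1(m)-m=o(\tau_2(m)-m)$ (so consecutive integer windows overlap and $[t_0,\infty)$ is covered), that a driving index is present before its window, and that $m_\ell\ge m_0+\ell$. These are elementary but delicate asymptotics that hinge exactly on the parameter inequalities already isolated in the proof of Lemma~\ref{Lem:k0II}, namely $\nu_1>a_1$ and $(\nu_1-a_1)\alpha(1+\epsilon)<a_1$ for $n=1$ (forcing $a_1<1$, whence $\tau_1(m)-m\sim(1+\epsilon)m^{a_1}$), $\nu_2\alpha(1+\epsilon)<a_2$ — equivalently $\nu_2+a_2<1$ — for $n=2$, and $a_n>\alpha(1+\epsilon)$ for $n\ge3$, together with the standing assumption $0<\epsilon<1/\alpha$. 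The conditional-independence bookkeeping for $\P(A_\tau\mid C_{\tau-1})$ must be carried through with the same care as in Lemma~\ref{Lem:LB}, using that a driving family starts either from its fixed initial size $\lceil f_{m_\ell}^{\,t_0-m_\ell}\rceil$ (when $m_\ell\le t_0$) or from one individual at generation $m_\ell-t_0$ on the event $J_{m_\ell-t_0}\supseteq C_{\tau-1}$ (when $m_\ell>t_0$).
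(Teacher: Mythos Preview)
Your plan is coherent and would succeed if the window construction were carried out, but the paper takes a different and shorter route that you should be aware of. Rather than replicating the sparse-window structure of Lemma~\ref{Lem:LB}, the paper assigns a driving index \emph{per generation}: it sets
\[
m_t=\begin{cases}
\lfloor \tfrac12(t+t_0)\rfloor,&n=1,\\
\big\lfloor t+t_0-\tfrac12(t+t_0)^{1-\nu_2}\big\rfloor,&n=2,\\
\big\lfloor t+t_0-\tfrac12(\lo{n-2}{t+t_0})^{\nu_n}\big\rfloor,&n\ge3,
\end{cases}
\]
and takes $A_t=\{M_{m_t}(t)\ge(b_{m_t}f_{m_t})^{t+t_0-m_t}\}$. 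The only asymptotic to verify is then the single inequality $\tfrac{1}{1+\epsilon}(t+t_0-m_t)\,\uii(m_t,n,\nu_n,a_n)\ge\chi(t+t_0,n,\nu_n)$, which reduces to the parameter facts $a_1<1$, $\nu_2+a_2<1$, $\nu_n>a_n$ you already identified. Because $m_{t+1}-m_t\in\{0,1\}$, every $F_k$ with $k\ge m_0$ drives some $A_\tau$, and the product is bounded termwise by $\prod_\tau(1-\xi_\tau)$ via \eqref{Eq:Plow}; repeated indices cause no trouble since each $\xi_\tau$ is at least doubly exponentially small. This completely bypasses the existence and overlap of the windows $[\tau_1(m),\tau_2(m)]$, which is precisely the part you flagged as ``elementary but delicate'' and did not do.

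So there is no error in your outline, but as written it is genuinely incomplete: the three regime-by-regime verifications that $\tau_1(m)-m=o(\tau_2(m)-m)$, that $t_0\in[\tau_1(m_0),\tau_2(m_0)]$ for a suitable $m_0<t_0$, and that consecutive windows overlap are all deferred, and these are the only new content relative to Lemma~\ref{Lem:LB}. If you pursue your route you must actually carry these out (they do go through, using exactly the inequalities you list). The paper's device trades that work for the much lighter check above; what you gain by your approach is a closer structural parallel to Lemma~\ref{Lem:LB} and a sum over families $\sum_\ell\xi_\ell$ rather than over generations $\sum_\tau\xi_\tau$, but neither gain is needed here.
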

\begin{proof}
Let 
$$
m_t := 
\begin{cases}
\left \lfloor \frac{1}{2} ( t + t_0 )\right \rfloor, & n = 1,\\
\big \lfloor t + t_0 - \frac{1}{2}(t+t_0)^{1-\nu_2} \big \rfloor,&n=2,\\
\big \lfloor t + t_0 -\frac{1}{2} \big( \lo{n-2}{t+t_0} \big)^{\nu_n} \big \rfloor,&n\ge 3.
\end{cases}
$$
Assume $t_0$ is so  large that $m_0 > k_0$ and $(m_t)_{t\ge 0}$ is an non-dereasing sequence of $t$.
Since $1>a_1$, $1> \nu_2 + a_2$, and $\nu_n > a_n$ for $n\ge 3$,
we have
$$
\lim_{t_0\rightarrow\infty} \frac{\chi(t+t_0,n,\nu_n)}{(t+t_0-m_t) \uii(m_t,n,\nu_n,a_n)} = 
\lim_{t\rightarrow\infty} \frac{\chi(t+t_0,n,\nu_n)}{(t+t_0-m_t) \uii(m_t,n,\nu_n,a_n)} = 0.
$$
So there is $t_1$ such that
$(t+t_0-m_t) \uii(m_t,n,\nu_n,a_n) \ge (1+\epsilon)\chi(t+t_0,n,\nu_n)$
for all $t_0 > t_1$ and $t\ge 0$. In the following, we  assume $t_0 > t_1$. Define 
\begin{align*}
&A_t := \left \{ M_{m_t}(t)\ge (b_{m_t} f_{m_t})^{t+t_0-m_t}\right \},
\quad \tilde J := \bigcap_{k=m_0}^{t_0}\left \{ F_{k} \ge f_{k}/(1-\beta)\right \},\\
&C_0 = A_0 \cap \tilde J,\quad C_{t} = A_{t}\cap J_t \cap C_{k-1},\quad C = \bigcap_{t=1}^\infty C_t.
\end{align*}
Note that $\tilde J$ and $A_0$ are sure events (by the initial condition) and so is $C_0$. Also note that $A_t \subset E_t$.
Observe that
$
\P(C_t) = \P\left (J_t \vert A_t \cap C_{t-1} \right ) \P\left (A_t \vert C_{t-1} \right ) \P(C_{t-1}).
$
Since $W_t$ is solely determined by $\Xi(t)$ and
$\log \Xi(t) \ge \chi(t+t_0,n,\nu_n)$ on the event
$A_t\cap C_{t-1}$, 
we have
$$
\P(J_t \vert A_t \cap C_{t-1}) \ge 1 - \exp\left ( -\beta e^{\chi(t+t_0,n,\nu_n)/2} \right )
=: 1 - \eta_t,
$$
where we have used \eqref{Eq:Wdown} with 
$y \mapsto \exp(\chi(t+t_0,n,\nu_n))$,
$x \mapsto \exp(U(m_t,n,\nu_n,a_n))$, \eqref{Eq:Ge02}, and $\chi(m_t,n,\nu_n) \le \chi(t+t_0,n,\nu_n)$.
Therefore, we have 
$$
\P(C) \ge \left (\prod_{\tau=1}^\infty \left (1-\eta_\tau\right )
\right ) 
\prod_{\tau=1}^{\infty}
\P(A_\tau \vert C_{\tau-1} ).
$$

Note that $A_\tau$ is independent of $J_k$ for $m_\tau < k < \tau$ 
and of $A_k$ for $k < a_\ell$ (this is because $M_m(t)$'s for different $m$'s are mutually independent branching
processes). Since $m_{t+1}-m_t \le 1$, all $F_k$ with $k \ge m_0$ should affect a certain
$A_\tau$ at least once. Therefore,
\begin{align*}
\prod_{\tau=1}^\infty \P(A_\tau \vert C_{\tau-1}) &\ge 
\prod_{\tau=1}^\infty \P\left ( M_{m_\tau}(k) \ge (b_{m_\tau} f_{m_\tau} )^{k+t_0-m_\tau}
\text{ for all }  k \ge 0 \vert F_{m_\tau} \ge f_{m_\tau}/(1-\beta)\right )\\
&\ge \prod_{\tau=1}^\infty \left ( 1 - \xi_\tau \right )
\ge 1 - \sum_{\tau=1}^\infty \xi_\tau,
\end{align*}
where we have used \eqref{Eq:Plow} with $f \mapsto f_{m_\tau}$.
Therefore,
\begin{align}
\P(C) \ge 1 - \sum_{\tau=1}^t \left ( \eta_\tau + \xi_\tau \right ).
\label{Eq:pd2}
\end{align}

Since
$
\lim_{k\rightarrow\infty} (\xi_{k} + \eta_{k}) (k+t_0)^2 = 0,
$
there is a constant $c$ that is independent of $t_0$ such that 
$\xi_{k} + \eta_{k} \le c (k+t_0)^{-2} \le ck^{-2}$ for all $k$.
Therefore, the series in \eqref{Eq:pd2} converges uniformly.
Since 
$
\lim_{t_0\rightarrow\infty} (\xi_{k} + \eta_{k})  = 0
$
for any $k$, 
we have
$
\lim_{t_0\rightarrow\infty} \P(C) = 1.
$
Since $C \subset E$ and $C \subset J$, we get the desired result.
\end{proof}

By the same logic as in Lemma~\ref{Cor:Xup}
and Lemma~\ref{Cor:Xlow}, Lemma~\ref{Lem:Xuptype22} and Lemma~\ref{Lem:LB22} now prove Theorem~\ref{Th:mainII}. 

\section{\label{Sec:pfemp} The empirical fitness distribution}

In this section, 
we introduce two variants of the \fm 
that (completely or partially) neglect fluctuations in the original model
with the type I tail function.
These variants will be called the deterministic \fm (\dfm)
and semi-deterministic \fm (\sfm) and will be defined in Section~\ref{Sec:deter}
and Section~\ref{Sec:sfm}, respectively.
As we will see presently, neglecting some fluctuations will 
facilitate rigorous proofs for the limit behaviour of the \efd.\medskip

To explain the motivation of introducing the \dfm and \sfm, 
we begin by finding in Lemma~\ref{Lem:strict} tighter bounds for 
$\X_t$ of the Galton-Watson process, which 
show that the fluctuations of $N_k(t)$ become smaller and smaller over time.

\subsection{\label{Sec:emprig}Fluctuations of $N_k(t)$ and $W_t$}
\begin{lemma}
\label{Lem:bBsum}
If $B>1$, $\theta>0$, and $1\le x_1 < x_2-1$, then
$$
\P(\X_t \le Bx_2\theta \vert  x_1 \le \X_{t-1} \le x_2)\ge 1 - \frac{B}{B-1} e^{-x_1 \theta  (B \log B - B + 1)}.
$$
\end{lemma}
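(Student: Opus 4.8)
The plan is to reduce this to Lemma~\ref{Lem:upsum} (the upper tail bound for the Poisson series) by conditioning on $\X_{t-1} = m$ with $x_1 \le m \le x_2$, exactly as in the proof of Lemma~\ref{Lem:GWup}. First I would set $\X_{t-1} = m$ with $x_1 \le m \le x_2$; by Remark~\ref{Rem:nPoi}, $\X_t$ is Poisson with mean $m\theta$. Define $B' := B x_2 \theta/(m\theta) = B x_2/m \ge B > 1$, so that $\lceil B' m\theta \rceil = \lceil B x_2 \theta\rceil$. Then Lemma~\ref{Lem:upsum} applied with $B\mapsto B'$ and $\theta \mapsto m\theta$ gives
\begin{align*}
\P(\X_t > Bx_2\theta \mid \X_{t-1}=m) = \P(\X_t > B' m\theta \mid \X_{t-1}=m) \le \frac{B'}{B'-1} e^{-m\theta(B'\log B' - B' + 1)}.
\end{align*}

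Next I would remove the $m$-dependence from this bound. For the prefactor, $y/(y-1)$ is decreasing in $y>1$ and $B' \ge B$, so $B'/(B'-1) \le B/(B-1)$. For the exponent, I need $m\theta(B'\log B' - B' + 1) \ge x_1\theta(B\log B - B + 1)$. Here the cleanest route is to note $m\theta B' = Bx_2\theta$ is actually independent of $m$, so the exponent equals $Bx_2\theta\log B' - Bx_2\theta + m\theta$; then $\log B' \ge \log B$ and $m \ge x_1$ together give $m\theta(B'\log B'-B'+1) \ge Bx_2\theta\log B - Bx_2\theta + x_1\theta$. Finally, since $x_2 \ge x_1$ (in fact $x_2 > x_1 + 1$) and $\log B > 0$, one has $Bx_2\theta\log B \ge Bx_1\theta\log B$, so the exponent is at least $x_1\theta(B\log B - B + 1)$ after a harmless replacement of $x_2$ by $x_1$ in the first term. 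Multiplying through by $e^{-\theta}$ is not needed here since the Poisson weights already carry it; the bound holds for each $m$ in the range and hence for the conditional probability given $x_1 \le \X_{t-1} \le x_2$.

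I do not expect a genuine obstacle here: the lemma is a routine variant of Lemma~\ref{Lem:GWup}, the only subtlety being the bookkeeping to pass from the $m$-dependent exponent to the stated $x_1$-dependent one, where it is essential to exploit that $m\theta B' = Bx_2\theta$ does not depend on $m$ before applying the monotonicity of $\log$ and the lower bound $m \ge x_1$. One should also record that the hypotheses $x_1 \ge 1$ and $x_2 > x_1 + 1$ guarantee the conditioning event has positive probability for the relevant integer values of $\X_{t-1}$ and that $B' > 1$ throughout, so Lemma~\ref{Lem:upsum} is applicable.
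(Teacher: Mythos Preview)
Your overall strategy matches the paper's exactly: condition on $\X_{t-1}=m$, set $B'=Bx_2/m\ge B$, apply Lemma~\ref{Lem:upsum}, then remove the $m$-dependence using monotonicity. The prefactor step is fine. The gap is in your treatment of the exponent.

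After you rewrite the exponent as $Bx_2\theta\log B' - Bx_2\theta + m\theta$ and lower it to $Bx_2\theta\log B - Bx_2\theta + x_1\theta$, you claim a ``harmless replacement of $x_2$ by $x_1$ in the first term'' finishes the job. But the target is $x_1\theta(B\log B - B + 1)=Bx_1\theta\log B - Bx_1\theta + x_1\theta$, so you also need $-Bx_2\theta\ge -Bx_1\theta$ in the second term, which is false since $x_2>x_1$. Concretely, the inequality you need reduces to $(x_2-x_1)B\theta(\log B-1)\ge 0$, which fails for all $1<B<e$. For example, with $B=3/2$, $\theta=1$, $x_1=1$, $x_2=10$ your intermediate lower bound is $15\log(3/2)-15+1\approx -7.9$, while the claimed target $x_1\theta(B\log B-B+1)\approx 0.11$ is positive.

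The fix is to avoid splitting the exponent and instead use that $y\mapsto y\log y - y$ is increasing on $(1,\infty)$ (equivalently, $y(1-\log y)$ is decreasing there). Since $B'\ge B>1$ this gives $B'\log B'-B'+1\ge B\log B-B+1>0$, hence $m\theta(B'\log B'-B'+1)\ge m\theta(B\log B-B+1)\ge x_1\theta(B\log B-B+1)$ using $m\ge x_1$. This is precisely the paper's argument.
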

\begin{proof}
Let $m\ge x_1$ and $B' = B x_2 /m \ge B$. By Lemma~\ref{Lem:upsum}
together with Remark~\ref{Rem:nPoi}, we have
\begin{align*}
\P&(\X_t > Bx_2\theta \vert  \X_{t-1}=m)
=\P(\X_t > B' m\theta \vert  \X_{t-1}=m)\\
&\le \tfrac{B'}{B'-1} e^{-m \theta (B' \log B' - B' + 1)}
\le \tfrac{B}{B-1}e^{-m\theta (B \log B - B + 1)} ,
\end{align*}
where we have used the fact that $y/(y-1)$ and $y(1-\log y)$ are decreasing functions in the region
$y >1$.
Since $m \ge x_1$, we have the desired result.
\end{proof}
\begin{lemma}
\label{Cor:bBsum}
If $1<B<\frac3 2$, $0<b<1$, $(1-b+b\log b)\theta>1$, and $1\le x_1 < x_2-1$, then
$$
\P(b x_1 \theta \le \X_t \le Bx_2\theta \vert  x_1 \le \X_{t-1} \le x_2)\ge 1 
- x_1\theta e^{-x_1 \theta  (1-b)^2/2}
- \frac{B}{B-1} e^{-x_1 \theta  (B-1)^2/3}.
$$
\end{lemma}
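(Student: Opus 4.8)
The plan is to combine the two one-sided tail estimates already available: the lower deviation bound from Lemma~\ref{Lem:lowsum} (via Remark~\ref{Rem:nPoi}) and the upper deviation bound from Lemma~\ref{Lem:bBsum}, and then to weaken the two exponents into the cleaner quadratic forms $(1-b)^2/2$ and $(B-1)^2/3$ using elementary inequalities for $1-b+b\log b$ and $B\log B-B+1$. Concretely, I would write the complement event as a union of the lower-deviation event $\{\X_t < bx_1\theta\}$ and the upper-deviation event $\{\X_t > Bx_2\theta\}$, conditioned on $x_1\le \X_{t-1}\le x_2$, and bound each by a union bound.

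For the lower tail, I would condition on $\X_{t-1}=m$ with $x_1\le m\le x_2$; by Remark~\ref{Rem:nPoi}, $\X_t$ is Poisson with mean $m\theta$, and $bx_1\theta\le bm\theta$, so Lemma~\ref{Lem:lowsum} (applicable since $(1-b+b\log b)\theta>1$ forces $(1-b)\theta\ge1$ and $\lfloor bm\theta\rfloor\ge1$ for $m\theta$ large enough — the degenerate cases need a word) yields $\P(\X_t<bx_1\theta\mid\X_{t-1}=m)\le m\theta\, e^{-m\theta(1-b+b\log b)}$. Using that $z\mapsto z e^{-zc}$ is decreasing for $z\ge1/c$, together with $m\theta\ge x_1\theta\ge 1/(1-b+b\log b)$, this is at most $x_1\theta\, e^{-x_1\theta(1-b+b\log b)}$, uniformly in $m$. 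The final step for this piece is the inequality $1-b+b\log b\ge (1-b)^2/2$ for $0<b<1$, which I would verify by noting both sides vanish to second order at $b=1$ and comparing derivatives (equivalently, $\log b\ge -(1-b)-(1-b)^2/(2b)$ is false in general, so more care: use $1-b+b\log b=\int_b^1\log(1/s)\,ds\ge\int_b^1(1-s)\,ds=(1-b)^2/2$, which is clean since $\log(1/s)\ge 1-s$).

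For the upper tail, Lemma~\ref{Lem:bBsum} directly gives, under $x_1\le\X_{t-1}\le x_2$ with $1\le x_1<x_2-1$, that $\P(\X_t>Bx_2\theta\mid x_1\le\X_{t-1}\le x_2)\le \tfrac{B}{B-1}e^{-x_1\theta(B\log B-B+1)}$, and then I would replace the exponent using $B\log B-B+1\ge (B-1)^2/3$ valid for $1<B<3/2$; again write $B\log B-B+1=\int_1^B\log s\,ds$ and bound $\log s\ge \tfrac{2(s-1)}{s+1}\ge \tfrac{2(s-1)}{3}$ on $(1,3/2)$, so the integral is at least $\int_1^B\tfrac{2(s-1)}{3}\,ds=(B-1)^2/3$. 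Adding the two bounds via the union bound gives exactly the claimed inequality.

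\textbf{Main obstacle.} The genuine estimates are routine once the right monotone-substitution tricks are in hand; the only real friction is bookkeeping around the hypotheses of Lemma~\ref{Lem:lowsum}, namely ensuring $\lfloor b m\theta\rfloor\ge1$ and $(1-b)m\theta\ge1$ for all admissible $m$ (these follow from $(1-b+b\log b)\theta>1$ together with $m\ge x_1\ge1$, but the edge case where $bx_1\theta<1$ — making the lower-deviation event empty or the floor zero — should be dispatched by observing the probability in question is then trivially bounded by the stated quantity). I expect the quadratic-exponent inequalities $1-b+b\log b\ge(1-b)^2/2$ and $B\log B-B+1\ge(B-1)^2/3$ (the latter only on $(1,3/2)$) to be the one spot worth stating explicitly, since the restriction $B<3/2$ in the hypothesis is presumably there precisely to make the cruder constant $1/3$ work.
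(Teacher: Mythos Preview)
Your proposal is correct and follows essentially the same route as the paper: combine the lower-deviation bound (the paper cites Lemma~\ref{Lem:GWlow} with $f=\theta$ directly, whereas you re-derive it from Lemma~\ref{Lem:lowsum} and the monotonicity of $z\mapsto ze^{-zc}$) with the upper-deviation bound of Lemma~\ref{Lem:bBsum}, and then weaken both exponents via the elementary inequality $1-x+x\log x\ge \tfrac12(1-x)^2$ for $0<x<1$ and $\ge\tfrac13(x-1)^2$ for $1<x<\tfrac32$. Your integral proofs of these two inequalities are a nice touch that the paper omits; otherwise the arguments coincide.
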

\begin{proof}
Using Lemma~\ref{Lem:GWlow} with $f=\theta$ and Lemma~\ref{Lem:bBsum}, we have
$$
\P(b x_1 \theta \le \X_t \le Bx_2\theta \vert  x_1 \le \X_{t-1} \le x_2)\ge 1 
- x_1\theta e^{-x_1 \theta  (1-b+b \log b)}
- \frac{B}{B-1} e^{-x_1 \theta  (1-B+B \log B)}.
$$
Since
$1 - x + x \log x \ge
\begin{cases} \frac{1}{2}(1-x)^2, & 0<x<1,\\
\frac{1}{3}(x-1)^2 ,& 1<x<3/2,
\end{cases}$
the proof is  completed.
\end{proof}\smallskip

\begin{lemma}
\label{Lem:strict}
Fix $0<\epsilon<\frac{1}{2}$ 
and abbreviate $c:=(1-\epsilon)/2$. Let $a_t := \theta^{-(1-2 \epsilon)/2} \left ( 1 - \theta^{- ct} \right )$
and
\begin{align*}
b_t &:= \frac{1-a_t}{1-a_{t-1}}=1 - 
\frac{\theta^c - 1}{1-a_{t-1}} \theta^{-ct - (1-2 \epsilon)/2},
\quad \prod_{k=1}^t b_k = 1-a_t.
\\ 
B_t &:= \frac{1+a_t}{1+a_{t-1}}=1+
\frac{\theta^c - 1}{1+a_{t-1}} \theta^{-ct - (1-2\epsilon)/2},
\quad \prod_{k=1}^t B_k = 1+a_t,
\end{align*}
where $\theta$ is assumed so large that for all $t\ge 1$
\begin{align}
\label{Eq:Bbcond}
0<a_t < 1,\quad 1<B_t < \frac{3}{2},\quad \theta^t \ge \theta^{ct - (1-2\epsilon)/2}, \nonumber\\
\frac{(1-\theta^{-c})^2}{2(1-a_{t-1})} \ge \frac{1}{4},\quad
\frac{(1-a_{t-1})(1-\theta^{-c})^2}{3(1+a_{t-1})} \ge \frac{1}{4},\quad
\frac{B_t(1+a_{t-1})}{\theta^c-1} \le 1,\\
4\theta \exp \left ( -\frac{\theta^{2\epsilon}}4 \right )
\le \exp \left ( -\frac{\theta^{2\epsilon}}5 \right ),\quad
\theta^{t} \exp \left ( -\frac1 4 
\theta^{\epsilon(t+1)} \right ) \le \frac{12}{\pi^2 t^2}
\theta \exp \left ( -\frac{\theta^{2\epsilon}}4 \right ).
\nonumber
\end{align}
Then,
\begin{align}
\P\left (
\left \vert \frac{\X_t}{\theta^t} - 1\right \vert \le 
2\theta^{-(1-2\epsilon)/2}
\text{ for all } t
\right )
\ge 1 -  \exp\left ( -\frac{\theta^{2\epsilon}}{5} \right ).
\label{Eq:tight2}
\end{align}
\end{lemma}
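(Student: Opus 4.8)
The plan is to realise the event in \eqref{Eq:tight2} as a countable intersection of nested one-step events and to estimate its probability by a product, in the spirit of the proofs of Lemma~\ref{Lem:nPoi} and Lemma~\ref{Lem:Xup}. For $t\ge 0$ put $\mathcal A_t:=\bigl\{(1-a_t)\theta^t\le\X_t\le(1+a_t)\theta^t\bigr\}$. Since $a_0=0$ and $\X_0=1$, $\mathcal A_0$ is a sure event, and since $a_t\le\theta^{-(1-2\epsilon)/2}$ we have $\bigcap_{t\ge 0}\mathcal A_t\subset\bigl\{|\X_t/\theta^t-1|\le 2\theta^{-(1-2\epsilon)/2}\ \text{for all }t\bigr\}$, so it suffices to bound $\P\bigl(\bigcap_{t\ge 0}\mathcal A_t\bigr)$ from below. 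The decisive algebraic point, immediate from $\prod_{k\le t}b_k=1-a_t$ and $\prod_{k\le t}B_k=1+a_t$, is that with $x_1:=(1-a_{t-1})\theta^{t-1}$ and $x_2:=(1+a_{t-1})\theta^{t-1}$ one has $b_t x_1\theta=(1-a_t)\theta^t$ and $B_t x_2\theta=(1+a_t)\theta^t$; thus the windows chain exactly through the branching recursion.

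The second step is the one-step estimate: for every $t\ge 1$ and every integer $m$ with $x_1\le m\le x_2$,
\begin{align*}
\P(\mathcal A_t\mid\X_{t-1}=m)\ \ge\ 1-p_t,\qquad
p_t:=x_1\theta\,e^{-\frac12 x_1\theta(1-b_t)^2}+\tfrac{B_t}{B_t-1}\,e^{-\frac13 x_2\theta(B_t-1)^2}.
\end{align*}
Conditionally $\X_t\sim\mathrm{Poisson}(m\theta)$; writing $(1-a_t)\theta^t=(b_tx_1/m)\,m\theta$ with $b_tx_1/m\le b_t<1$ and using Lemma~\ref{Lem:lowsum}, and $(1+a_t)\theta^t=(B_tx_2/m)\,m\theta$ with $B_tx_2/m\ge B_t>1$ and using Lemma~\ref{Lem:upsum}, one obtains $m$-dependent tail bounds which reduce to $m=x_1$ (lower tail) and $m=x_2$ (upper tail) by the monotonicity of $z\mapsto ze^{-cz}$ on $[1/c,\infty)$ and of $y\mapsto y/(y-1)$ on $(1,\infty)$ — exactly the argument in the proof of Lemma~\ref{Cor:bBsum}. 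Together with $0<b_t<1$, $1<B_t<\tfrac32$ from \eqref{Eq:Bbcond} and the elementary bounds $1-b+b\log b\ge\tfrac12(1-b)^2$, $B\log B-B+1\ge\tfrac13(B-1)^2$, this gives the displayed $p_t$. (Here Lemma~\ref{Cor:bBsum} is re-run rather than quoted: its hypothesis $(1-b+b\log b)\theta>1$ can fail for $t\ge2$ and small $\epsilon$, but only $x_1\theta(1-b_t+b_t\log b_t)\ge1$ is actually needed, and this is ensured by \eqref{Eq:Bbcond}; the base case $t=1$, where the only admissible $m$ is $1$, is a direct Poisson$(\theta)$ tail bound with $b_1=1-a_1$, $B_1=1+a_1$.) Since $\mathcal A_0$ is sure and the one-step bounds are uniform in $m$, the iteration of Lemma~\ref{Lem:Markov} yields $\P\bigl(\bigcap_{k=0}^t\mathcal A_k\bigr)\ge\prod_{k=1}^t(1-p_k)\ge 1-\sum_{k=1}^t p_k$, hence $\P\bigl(\bigcap_{t\ge 0}\mathcal A_t\bigr)\ge 1-\sum_{t\ge 1}p_t$.

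It remains to show $\sum_{t\ge 1}p_t\le e^{-\theta^{2\epsilon}/5}$, and this is the heart of the proof. With $c=(1-\epsilon)/2$, so that $1-2c=\epsilon$, and using $1-b_t=\tfrac{\theta^c-1}{1-a_{t-1}}\theta^{-ct-(1-2\epsilon)/2}$, $B_t-1=\tfrac{\theta^c-1}{1+a_{t-1}}\theta^{-ct-(1-2\epsilon)/2}$, $x_1\theta=(1-a_{t-1})\theta^t$, $x_2\theta=(1+a_{t-1})\theta^t$ and $(\theta^c-1)^2=\theta^{1-\epsilon}(1-\theta^{-c})^2$, a direct computation gives
\begin{align*}
\tfrac12 x_1\theta(1-b_t)^2=\frac{(1-\theta^{-c})^2}{2(1-a_{t-1})}\,\theta^{\epsilon(t+1)},\qquad
\tfrac13 x_2\theta(B_t-1)^2=\frac{(1-\theta^{-c})^2}{3(1+a_{t-1})}\,\theta^{\epsilon(t+1)}.
\end{align*}
The smallness conditions in \eqref{Eq:Bbcond} are tailored precisely so that both right-hand sides are $\ge\tfrac14\theta^{\epsilon(t+1)}$ and the prefactors satisfy $x_1\theta\le\theta^t$ and $\tfrac{B_t}{B_t-1}\le\theta^t$; hence $p_t\le 2\theta^t e^{-\frac14\theta^{\epsilon(t+1)}}$. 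The last two lines of \eqref{Eq:Bbcond} give $\theta^t e^{-\frac14\theta^{\epsilon(t+1)}}\le\tfrac{12}{\pi^2 t^2}\theta e^{-\theta^{2\epsilon}/4}$, so $\sum_{t\ge 1}p_t\le 4\theta e^{-\theta^{2\epsilon}/4}\le e^{-\theta^{2\epsilon}/5}$, which finishes the argument.

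The main obstacle is exactly this scale-matching. The one-step conditional fluctuation of $\X_t/\theta^t$ given $\X_{t-1}$ is of relative order $\theta^{-t/2}$, while the one-step slack $1-b_t$ decays like $\theta^{-c(t-1)-(1-2\epsilon)/2}$; one must verify that the slack stays comfortably above the fluctuation, so that each $p_t$ is not merely small but summable with a bound on $\sum_t p_t$ that is uniform in $\theta$ of the sharp form $e^{-\theta^{2\epsilon}/5}$. This is exactly what the choice $c=(1-\epsilon)/2$ buys, since it makes $x_1\theta(1-b_t)^2\asymp\theta^{\epsilon(t+1)}$ grow with $t$. The remaining work — tracing the chain of inequalities \eqref{Eq:Bbcond} to produce the numerical constants above, and the harmless degeneracy of the range at $t=0$ in Lemma~\ref{Lem:Markov} — is routine bookkeeping.
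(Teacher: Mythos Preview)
Your argument is the paper's own: set $\mathcal A_t=\{(1-a_t)\theta^t\le\X_t\le(1+a_t)\theta^t\}$, chain via Lemma~\ref{Lem:Markov}, bound the one-step complement by re-running Lemma~\ref{Cor:bBsum}, and sum using the conditions in~\eqref{Eq:Bbcond}. One slip to correct: for the upper tail the worst case over $x_1\le m\le x_2$ occurs at $m=x_1$, not $m=x_2$ (Lemma~\ref{Lem:bBsum} carries $x_1\theta$ in the exponent), so your second displayed exponent should read $\tfrac13\,x_1\theta(B_t-1)^2=\tfrac{(1-a_{t-1})(1-\theta^{-c})^2}{3(1+a_{t-1})^2}\,\theta^{\epsilon(t+1)}$; this is exactly what the fifth inequality in~\eqref{Eq:Bbcond} is designed for, and the rest of your bookkeeping is unaffected. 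Your remark that Lemma~\ref{Cor:bBsum} cannot be quoted verbatim because $(1-b_t+b_t\log b_t)\theta>1$ may fail for large $t$, while only $x_1\theta(1-b_t+b_t\log b_t)\ge1$ is actually needed, is correct and a point the paper glosses over.
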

\begin{proof}
Let $A_t :=\{ (1-a_t) \theta^t \le \X_t \le (1+a_t) \theta^t \}$ and
$A := \bigcap_{t=0}^\infty A_t$.
Abbreviating 
$x_1 := (1-a_{t-1}) \theta^{t-1}$ and
$x_2 := (1+a_{t-1}) \theta^{t-1}$, we can write
$$
\P(A_t \vert A_{t-1})=\P\left (b_t x_1 \theta \le \X_t \le B_t x_2 \theta \vert x_1 \le \X_{t-1} \le x_2 \right ).$$
By Lemma~\ref{Cor:bBsum}, we have
\begin{align*}
\P(A_t \vert A_{t-1})
\ge 1 &- (1-a_{t-1})\theta^t 
\exp \left ( - \frac{(1-\theta^{-c})^2}{2(1-a_{t-1})} \theta^{\epsilon(t+1)} \right ) \\
&- \frac{B_t(1+a_{t-1})}{\theta^c-1} \theta^{ct - (1-2\epsilon)/2}
\exp \left ( - \frac{(1-a_{t-1})(1-\theta^{-c})^2}{3(1+a_{t-1})} \theta^{\epsilon(t+1)} \right )\\
&\ge 1 - 2 \theta^t \exp \left ( -\frac{1}{4} \theta^{\epsilon(t+1)} \right ),
\end{align*}
where we have used \eqref{Eq:Bbcond}.
Using the last condition of \eqref{Eq:Bbcond}, we have
$$
\sum_{t=1}^\infty \theta^t \exp \left ( -\frac{1}{4} \theta^{\epsilon(t+1)} \right )
\le \frac{12}{\pi^2}\theta \exp\left ( -\frac{\theta^{2\epsilon}}{4} \right )
\sum_{t=1}^\infty \frac{1}{t^2}
=2 \theta \exp\left ( -\frac{\theta^{2\epsilon}}{4} \right ).
$$
Hence, 
$$
\P(A) 
\ge 1 - 2 \sum_{t=1}^\infty \theta^t \exp \left ( -\frac{\theta^{\epsilon(t+1)}}{4} \right )
\ge 1 - 4 \theta \exp\left ( -\frac{\theta^{2\epsilon}}{4} \right )
\ge 1 - \exp\left ( -\frac{\theta^{2\epsilon}}{5} \right ),
$$
where we have used Lemma~\ref{Lem:Markov}.
Since $1-\theta^{-ct} \le 2$ and, therefore,
$a_t \le 2 \theta^{-(1-2\epsilon)/2}$, the probability in \eqref{Eq:tight2} is larger than $\P(A)$ and the proof is completed.
\end{proof}

\noindent
{\bf Definition.}
By $\theta_0(\epsilon)$ we denote the infimum over all $\theta$ that satisfy \eqref{Eq:Bbcond}.

\begin{remark}
\label{Rem:theta} 
If we are given a weaker condition
in Lemma~\ref{Lem:strict} such that 
there are $x$ and $y$ such that $x \ge \theta \ge y> \theta_0(\epsilon)$,
then we have
$$
\P\left (\left . \left \vert \frac{\X_t}{\theta^t} - 1 \right \vert \le 2 \theta^{-(1-2\epsilon)/2}
\text{ for all } t\right \vert y \le \theta \le x \right ) \ge 1 - \exp \left (-\frac{y^{2\epsilon}}5\right ).
$$
\end{remark}
\begin{lemma}
\label{Lem:Olim}
For a discrete-time stochastic process $Z_t$ and a nonzero function $f(t)$,
define
\begin{align*}
J:= \left \{ \lim_{t\rightarrow\infty} \frac{Z_t}{f(t)} = 1 \right \},\quad
D_{m,k}:=\left \{ \left \vert \frac{Z_k}{f(k)}-1 \right \vert  \le 2^{-m}\right \},
\\
O_{m,\tau}:=\bigcap_{k=\tau}^\infty D_{m,k},
\quad O_{m}:=\bigcup_{\tau=1}^\infty O_{m,\tau},\quad O = \bigcap_{m=1}^\infty O_m.
\end{align*}
Then, $O=J$ and
\begin{align*}
\lim_{\tau\rightarrow\infty}\P(O_{m,\tau}) \ge \P(J),
\end{align*} 
for any positive integer $m$.
\end{lemma}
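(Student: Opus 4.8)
The plan is to separate the combinatorial set identity $O=J$ from the measure-theoretic limit, proving the former pointwise and then deducing the latter by continuity of $\P$ along the monotone family $(O_{m,\tau})_\tau$.

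For the identity, fix a sample point $\omega$. By definition $\omega\in J$ iff $Z_t(\omega)/f(t)\to 1$, i.e.\ for every $\epsilon>0$ there is a $\tau$ with $|Z_k(\omega)/f(k)-1|\le\epsilon$ for all $k\ge\tau$. Since $2^{-m}\downarrow 0$, it suffices to quantify over the thresholds $\epsilon=2^{-m}$, $m\in\N$: any $\epsilon>0$ dominates some $2^{-m}$, and each $2^{-m}$ is itself an admissible $\epsilon$. Rewriting, $\omega\in J$ iff for every $m$ there is a $\tau$ with $\omega\in\bigcap_{k\ge\tau}D_{m,k}=O_{m,\tau}$, i.e.\ $\omega\in O_m$, i.e.\ (now quantifying over $m$) $\omega\in\bigcap_{m\ge 1} O_m=O$. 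Hence $O=J$; in particular $J$ is an event, being obtained from the random variables $Z_t$ by countably many elementary set operations.

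For the probability bound, I would observe that for fixed $m$ the events $O_{m,\tau}=\bigcap_{k\ge\tau}D_{m,k}$ are nondecreasing in $\tau$, since increasing $\tau$ only removes constraints from the intersection. Continuity of $\P$ from below then gives $\lim_{\tau\to\infty}\P(O_{m,\tau})=\P\bigl(\bigcup_{\tau\ge 1}O_{m,\tau}\bigr)=\P(O_m)$. Since $J=O=\bigcap_{m'\ge 1}O_{m'}\subseteq O_m$, we have $\P(O_m)\ge\P(J)$, and combining the two relations yields $\lim_{\tau\to\infty}\P(O_{m,\tau})\ge\P(J)$, as claimed.

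I do not expect a genuine obstacle here: the lemma is a routine translation of almost-sure convergence into tail events together with monotone continuity of the measure. The only mildly delicate points are the reduction from ``all $\epsilon>0$'' to ``all $2^{-m}$'' in the definition of the limit, and getting the direction of monotonicity of $\tau\mapsto O_{m,\tau}$ right --- both dispatched in one line once spelled out.
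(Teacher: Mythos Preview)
Your proof is correct and follows essentially the same approach as the paper's: both establish $O=J$ pointwise via the equivalence between ``for all $\epsilon>0$'' and ``for all $2^{-m}$'', and both derive the probability bound from the monotonicity $O_{m,\tau}\uparrow O_m$ together with $J=O\subset O_m$. The only cosmetic difference is that the paper splits the set identity into two separate inclusions (using a contrapositive for $O\subset J$), whereas you run a single iff chain.
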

\begin{proof}
First note that
$O_{m,\tau} \subset O_{m,\tau+1} \subset O_m$ and $O\subset O_{m+1} \subset O_{m}$.
Consider any outcome $\omega \in J$ and fix $m$. Under $\omega$, for any $0<\epsilon'\le 2^{-m}$ there is
$k_0$ such that $\vert Z_k/f(k)-1 \vert \le \epsilon' \le 2^{-m}$ for all $k \ge k_0$,
which implies $\omega \in O_m$. 
Since $m$ is arbitrary, we have $J \subset O$.
\medskip

Now consider $\omega' \notin J$. Then under $\omega'$ there is $\epsilon'>0$ such that 
$\vert Z_k/f(k)-1 \vert  >\epsilon'$ for infinitely many $k$'s. Hence,
$\omega'$ cannot be an outcome in $O_m$ if $2^{-m}<\epsilon'$. Hence, $\omega' \notin O$ and,
accordingly, $O \subset J$.
Even if $J$ is empty, the proof of $O\subset J$ is still applicable 
and the rest of the statement is trivially valid.\medskip

Since  $O\subset O_m$ and
$\P(O_m) = \lim_{\tau\rightarrow\infty} \P(O_{m,\tau})$ for any $m$,
the proof is completed.
\end{proof}
\begin{lemma}
\label{Thm:efd}
If $G$ is of the type I with $n=1$ or with $n=2 \text{ and }\alpha<1$, then
let
\begin{align}
\label{Eq:IJ}
J:=\left \{ \lim_{t\rightarrow\infty} \frac{W_t}{u_n(t)} = 1 \right \}.
\end{align}
If $G$ is of the type II, then let
$$
J:=\left \{ \lim_{t\rightarrow\infty} \frac{\lo 2{W_t}}{\log t} = \frac{1}{\alpha}
\right \}
$$
for $n=1$,
$$
J:=\left \{ \lim_{t\rightarrow\infty} \frac{\lo 3{W_t}}{\log t} = \frac{1}{1+\alpha}
\right \}
$$
for $n=2$,
and
$$
J:=\left \{ \lim_{t\rightarrow\infty} \frac{1}{\lo{n-1} t} \log \left ( 
\frac{\lo 2 {W_t}}{ t}\right ) = -\alpha
\right \}
$$
for $n\ge 3$.
For the type II tail function or for the type I tail function with $n=1$, fix an arbitrary $\epsilon$ satisfying $0<\epsilon<1/2$.
For the type I tail function with $n=2$ and $\alpha<1$, fix an arbitrary $\epsilon$ satisfying
$\alpha < 2\epsilon < 1$. 
Abbreviate $\theta_k:=(1-\beta)W_k$ and
let
\begin{align*}
C_k &:= \bigcap_{t=k}^\infty \left \{ \left \vert \frac{N_k(t)}{\theta_k^{t-k}} -1 \right \vert \le 
2\theta_k^{-(1-2\epsilon)/2}
\right \},\quad
E_\tau := \bigcap_{k=\tau}^\infty C_k, \quad E := \bigcup_{\tau=1}^\infty E_\tau,
\end{align*}
where we assume $N_k(t)/\theta_k^{t-k} = 1$  and $\theta_k^{-(1-2\epsilon)/2} = \infty$
if $W_k = 0$.
Then $\P(E\vert J) = 1$.
\end{lemma}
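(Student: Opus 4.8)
The plan is to deduce the statement from Lemma~\ref{Lem:strict} by a Borel--Cantelli argument. The structural fact that powers this is that, conditionally on the value of $W_k$, the sequence $(N_k(t))_{t\ge k}$ of non-mutated descendants of the fittest mutant of generation $k$ is precisely the Galton--Watson process $(\X_t)_{t\ge 0}$ of the Definition preceding Remark~\ref{Rem:nPoi}, run with Poisson offspring mean $\theta_k=(1-\beta)W_k$: by Poisson thinning the number of non-mutated offspring of an individual of fitness $w$ is $\mathrm{Poisson}((1-\beta)w)$, independently of everything else in its generation, and iterating this identifies $(N_k(t))_{t\ge k}$ with $(\X_t)_{t\ge 0}$. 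Hence for any event $A$ in the $\sigma$-algebra generated by $(N_k(t))_{t\ge k}$ the value $\P(A\mid W_k=w)$ equals $\P$ of the corresponding event for $(\X_t)_{t\ge 0}$ with $\theta=(1-\beta)w$. As $C_k$ is exactly the event that this process remains in the tube of Lemma~\ref{Lem:strict}, that lemma (with $\theta_0(\epsilon)$ as defined there) yields, whenever $(1-\beta)w>\theta_0(\epsilon)$,
$$
\P\big(C_k^{c}\mid W_k=w\big)\ \le\ \exp\!\Big(-\tfrac15\big((1-\beta)w\big)^{2\epsilon}\Big).
$$

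Next I would fix an increasing deterministic envelope $v$ with $v(t)\to\infty$, such that $W_t\ge v(t)$ for all large $t$ holds on $J$ and such that $\sum_t\exp\big(-c\,v(t)^{2\epsilon}\big)<\infty$ for every $c>0$. For type~I take $v=\tfrac12 u_n$: the first property holds on $J$ by Theorem~\ref{Th:mainthm}, while by \eqref{Eq:conomega} the quantity $v(t)^{2\epsilon}$ eventually exceeds a constant times $t^{2\epsilon/\alpha-\delta}$ when $n=1$ and $(\log t)^{2\epsilon/\alpha-\delta}$ when $n=2$, for every $\delta>0$; summability then holds because $2\epsilon/\alpha>0$ for $n=1$ and because $2\epsilon/\alpha>1$ for $n=2$ with $\alpha<1$, which is exactly what the hypothesis $\alpha<2\epsilon<1$ secures (the bound $2\epsilon<1$ merely ensuring $\epsilon<\tfrac12$, as required by Lemma~\ref{Lem:strict}). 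For type~II, Theorem~\ref{Th:mainII} supplies a deterministic lower bound $v$ with $W_t\ge v(t)$ eventually on $J$ and $\log v(t)$ growing at least like a positive power of $t$ (indeed multiply exponentially in $t$ when $n\ge2$), so $v(t)^{2\epsilon}$ beats every power of $t$ and summability is automatic for any $\epsilon\in(0,\tfrac12)$. Put $\underline{J}_\tau:=\{W_t\ge v(t)\text{ for all }t\ge\tau\}$; these are $\sigma(W_1,W_2,\dots)$-measurable, increasing in $\tau$, and $J\subseteq\bigcup_{\tau\ge1}\underline{J}_\tau$.

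I would then fix $\tau$ large enough that $(1-\beta)v(\tau)>\theta_0(\epsilon)$. For every $k\ge\tau$ one has $\underline{J}_\tau\subseteq\{W_k\ge v(k)\}$, so, conditioning on $W_k$ and using that $w\mapsto\exp(-\tfrac15((1-\beta)w)^{2\epsilon})$ is decreasing,
$$
\P\big(C_k^{c}\cap\underline{J}_\tau\big)\ \le\ \P\big(C_k^{c}\cap\{W_k\ge v(k)\}\big)=\E\big[\mathbf{1}_{\{W_k\ge v(k)\}}\,\P(C_k^{c}\mid W_k)\big]\ \le\ \exp\!\Big(-\tfrac15\big((1-\beta)v(k)\big)^{2\epsilon}\Big),
$$
which is summable in $k$ by the choice of $v$. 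By Borel--Cantelli, $\P\big((\limsup_k C_k^{c})\cap\underline{J}_\tau\big)=0$; and since $E^{c}=\bigcap_{\tau'\ge1}\bigcup_{k\ge\tau'}C_k^{c}=\limsup_k C_k^{c}$, this reads $\P(E^{c}\cap\underline{J}_\tau)=0$. Consequently
$$
\P(E^{c}\cap J)\ \le\ \P(E^{c}\cap\underline{J}_\tau)+\P(J\setminus\underline{J}_\tau)=\P(J\setminus\underline{J}_\tau)\ \longrightarrow\ 0\qquad(\tau\to\infty),
$$
since $\underline{J}_\tau$ increases to a set containing $J$. Hence $\P(E^{c}\cap J)=0$, which is the asserted $\P(E\mid J)=1$.

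The main obstacle is the independence bookkeeping hidden in the first step. Although $C_k$ is an event about $(N_k(t))_{t\ge k}$ and $J$ is an event about $(W_t)_t$, the two are \emph{not} conditionally independent given $W_k$, because a family founded in generation $k$ that happens to grow unusually large inflates the total fitness in later generations and thereby biases the later mutant fitnesses $W_j$ with $j>k$. The device that circumvents this is never to condition on $J$ itself: in the displayed estimate $C_k^{c}$ is only intersected with the single-generation event $\{W_k\ge v(k)\}$, for which the conditional law of $(N_k(t))_{t\ge k}$ given $W_k$ is still exactly the Galton--Watson law, so that the bound from Lemma~\ref{Lem:strict} applies verbatim; the gap between $\underline{J}_\tau$ and $J$ is then absorbed by the harmless convergence $\P(J\setminus\underline{J}_\tau)\to0$.
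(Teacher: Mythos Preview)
Your argument is correct and follows essentially the same route as the paper's proof: replace $J$ by single-generation control events on $W_k$, apply the tube bound of Lemma~\ref{Lem:strict} (via the conditional Galton--Watson structure of $(N_k(t))_{t\ge k}$ given $W_k$), sum, and close the gap between the single-generation events and $J$. The summability check you give for type~I with $n=2$ is exactly where the hypothesis $\alpha<2\epsilon$ enters, matching the paper's condition~\eqref{Eq:tau0}.

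The only differences are cosmetic. The paper uses two-sided windows $D_{m,k}$ indexed by an auxiliary integer $m$ and passes through Lemma~\ref{Lem:Olim} to identify $J=\bigcap_m O_m$, whereas you observe (correctly) that a one-sided lower bound $\{W_k\ge v(k)\}$ already suffices because the error estimate from Lemma~\ref{Lem:strict} is monotone in $\theta$; this lets you dispense with the $m$-parameter and with Lemma~\ref{Lem:Olim}, replacing the paper's explicit tail sum $\sum_{k\ge\tau}\frac{1}{k(k+1)}$ by a direct Borel--Cantelli. Your final paragraph on why one must not condition on $J$ itself is a clear articulation of exactly the device the paper uses implicitly when it bounds $\P(C_k^c\cap O_{m,\tau})$ by $\P(C_k^c\cap D_{m,k})$.
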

\begin{proof}
Let $ U(x,m):= (1-2^{-m}) u_n(x) $
for the type I tail function and
\begin{align*}
U(x,m) &:= \exp \left ( x^{(1-2^{-m})/\alpha} \right ),\\
U(x,m) &:= \en{2}{x^{(1-2^{-m})/(1+\alpha)}},\\
U(x,m) &:= \en{2}{ x \exp\left (  - \alpha ( 1 + 2^{-m} ) \lo{n-1}{x} \right )},
\end{align*}
for the type II tail function with $n=1$, $n=2$, and $n\ge 3$, respectively.
In the above definition, $x$ is assumed sufficiently large that $U(x,m)$ is well defined.
With the fixed $\epsilon$, for any positive $m$ there is $\tau_0(m)$ such that
\begin{align}
\label{Eq:tau0}
\exp \left ( - \frac{(1-\beta)^{2\epsilon}}{5}  U(t,m)^{2\epsilon} \right ) \le \frac{1}{t(t+1)},
\end{align}
for all $t \ge \tau_0(m)$.
Let
\begin{align*}
D_{m,k}&:=\left \{ \left \vert \frac{W_k}{u_n(k)}-1 \right \vert  \le 2^{-m}\right \},
\end{align*}
for the type I tail function and
\begin{align*}
D_{m,k}&:=\left \{ \left \vert \frac{\alpha \lo 2 {W_k}}{\log k}-1 \right \vert  \le 2^{-m}\right \},\\
D_{m,k}&:=\left \{ \left \vert \frac{(1+\alpha) \lo 3 {W_k}}{\log k}-1 \right \vert  \le 2^{-m}\right \},\\
D_{m,k}&:=\left \{ \left \vert \frac{1}{\alpha\lo{n-1} k}\log \left ( \frac{\lo 2 {W_k}}{\log k}\right ) +1 \right \vert  \le 2^{-m}\right \},
\end{align*}
for the type II tail function with $n=1$, $n=2$, and $n\ge 3$, respectively.
Let
\begin{align*}
C_k^c &:= \bigcup_{t=k}^\infty \left \{ 
\left \vert \frac{N_k(t)}{\theta_k^{t-k}} -1 \right \vert > 
2\theta_k^{-(1-2\epsilon)/2}  \right \},\quad
E_\tau^c := \bigcup_{k=\tau}^\infty C_k^c,\\
O_{m,\tau}&:=\bigcap_{k=\tau}^\infty D_{m,k},
\quad O_{m}:=\bigcup_{\tau=1}^\infty O_{m,\tau},
\end{align*}
where $\tau$ is assumed large enough such that $u_n(\tau)$, $\log \tau$,
and $\lo{n-1}{\tau}$ are well defined.
Note that, for all sufficiently large $\tau$,
$E_\tau \subset E_{\tau+1} \subset E$.
Now, consider $\P(E_\tau^c \cap O_{m,\tau})$ for $m\ge 1$.
By the sub-additivity of the probability measure, we have
\begin{align*}
\P(E_\tau^c \cap O_{m,\tau}) 
&= \P\left ( \bigcup_{k=\tau}^\infty \left ( C_k^c \cap O_{m,\tau} \right )\right )
\le \sum_{k=\tau}^\infty \P\left (C_k^c \cap O_{m,\tau} \right )
\le \sum_{k=\tau}^\infty \P(C_k^c \cap D_{m,k} ),
\end{align*}
where we have used $O_{m,\tau} \subset D_{m,k}$ for any $k\ge \tau$. 
Now fix an integer $m \ge 1$ and consider large enough $\tau$ such that $\tau > \tau_0(m)$ 
as in \eqref{Eq:tau0} and
$(1-\beta)U(k,m) > \theta_0(\epsilon)$ for all $k \ge \tau$.
Since $\P(C_k^c \cap D_{m,k}) \le \P(C_k^c \vert D_{m,k}) = 1 - \P(C_k \vert D_{m,k})$,
Remark~\ref{Rem:theta} with
$y \mapsto (1-\beta) U(k,m)$ gives
$$
\P(C_{k+\tau}^c\cap D_{m,{k+\tau}}) \le \frac{1}{(k+\tau)(k+\tau+1)},
$$
for all $k \ge 0$, where we have used \eqref{Eq:tau0}.
Therefore, we have
\begin{align}
\lim_{\tau\rightarrow\infty} \P(E_\tau^c \cap O_{m,\tau} ) &\le
\lim_{\tau\rightarrow\infty} 
\sum_{k=0}^\infty \P(C_{\tau+k}^c \cap D_{m,\tau+k}) \le
\lim_{\tau\rightarrow\infty} \frac{1}\tau = 0.
\label{Eq:Etauc}
\end{align}
Since $\P(O_{m,\tau}) \le p_s$ for all sufficiently large $\tau$, $\P(J)=p_s$, and
$
\P(E_\tau\cap O_{m,\tau}) = \P(O_{m,\tau}) - \P(E_\tau^c \cap O_{m,\tau}),
$
Lemma~\ref{Lem:Olim} gives
\begin{align}
\label{Eq:Etau}
\lim_{\tau\rightarrow\infty} \P(E_\tau \cap O_{m,\tau}) 
=\lim_{\tau\rightarrow\infty} \P(O_{m,\tau}) 
=p_s.
\end{align}
Since $E_\tau \subset E$ and $O_{m,\tau} \subset O_m$, we have
$
p_s \ge \P(E \cap O_m) \ge 
\lim_{\tau\rightarrow\infty} \P(E_\tau \cap O_{m,\tau}) 
=p_s,
$
for all~$m$.
Therefore,
$
\P(E\vert J) \P(J)
=\P(E\cap J) 
=\lim_{m\rightarrow\infty} \P(E\cap O_m) = p_s.
$
Since  $\P(J)=p_s$, the proof is completed.
\end{proof}
\begin{remark}
\label{Rem:um}
In the proof, \eqref{Eq:tau0} plays the decisive role.
If $G$ is of type I with $n \ge 3$ or with $n=2$ and $\alpha > 1$,
\eqref{Eq:tau0} is not applicable. Within the tools we are equipped with,
we are not aware of a similar result to Lemma~\ref{Thm:efd}
for fast decaying tail functions.
\end{remark}

\begin{remark}
We can rewrite Lemma~\ref{Thm:efd} as follows. 
For any type II tail function and for a type I tail function with
$n=1$ or with $n=2$ and $\alpha<1$, for any $\epsilon>0$
$$
\lim_{\tau\rightarrow\infty} 
\P \left ( \left \vert \frac{N_k(k+s)}{(1-\beta)^s W_k^s} - 1 \right \vert 
< \epsilon \text{ for all } s\ge 0 \text{ and for all }k \ge \tau \right ) = p_s.
$$
\end{remark}
\begin{remark}
If $G$ is of the Fr\'echet type in \cite{Park2023}, then setting
$$
J:=\left \{ \lim_{t\rightarrow\infty} \frac{\lo 2{W_t}}{t} = \nu(\alpha)
\right \}
$$
in Lemma~\ref{Thm:efd} gives $\P(E\vert J)=1$.
\end{remark}

The following two lemmas, which will not be directly used later, are for explaining at what point the proof of Theorem~\ref{Th:efd} becomes difficult and  also for providing a more compelling rationale of introducing \dfm and \sfm.
\begin{lemma}
\label{Lem:XXi}
For the \fm, define two random sequences $(\eta_t)$ and $(\xi_t)$ as
$$
X(t) = (1-\beta)\Xi(t) + \eta_t \Xi(t)^{3/4} ,\quad
(1-\beta) \Xi(t) = X(t) + \xi_t X(t)^{3/4} .
$$
In case $X(t) =\Xi(t) = 0$, we define $\xi_t = \eta_t=0$.
Then almost surely
$$
\lim_{t\rightarrow\infty} \eta_t=
\lim_{t\rightarrow\infty} \xi_t=0.
$$
\end{lemma}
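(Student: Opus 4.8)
The plan is to recognise that, conditionally on $\Xi(t)$, the quantity $X(t)-(1-\beta)\Xi(t)$ is a centred binomial up to a bounded correction, to bound its fluctuations by Hoeffding's inequality, and then to remove the conditioning with Borel--Cantelli, using the super-exponential growth of the population from Theorem~\ref{Th:mainthm} to guarantee that the random normalisation $\Xi(t)^{3/4}$ is large.

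First I would record the conditional structure: given $\Xi(t)=N\ge 1$, the $N$ offspring of generation $t-1$ contain a $\mathrm{Binomial}(N,\beta)$ number $M$ of mutants, and in the \fm\ exactly $X(t)=(N-M)+\mathbf{1}\{M\ge 1\}$ of them survive, so $|X(t)-(1-\beta)N|\le|M-\beta N|+1$. Hoeffding's inequality then gives, for every $\varepsilon>0$ and all $N$ large enough,
\[
\P\bigl(|X(t)-(1-\beta)\Xi(t)|>\varepsilon\,\Xi(t)^{3/4}\ \big|\ \Xi(t)=N\bigr)\le 2\exp\!\bigl(-\tfrac{\varepsilon^2}{2}N^{1/2}\bigr),
\]
and for $N=0$ one has $X(t)=0$ and $\eta_t=\xi_t=0$ by definition. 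Next I would use $X(t)\le\Xi(t)$ together with Theorem~\ref{Th:mainthm} (or Lemma~\ref{Cor:Xlow}): almost surely on $\Su$ we have $\Xi(t)\ge e^{t}$ for all large $t$, while on the complement of $\Su$ we have $\Xi(t)=0$ for all large $t$. Summing the conditional bound over $N\ge e^{t}$ yields $\P(|X(t)-(1-\beta)\Xi(t)|>\varepsilon\,\Xi(t)^{3/4},\ \Xi(t)\ge e^{t})\le 2\exp(-\tfrac{\varepsilon^2}{2}e^{t/2})$, which is summable, so Borel--Cantelli shows that almost surely only finitely many $t$ satisfy both $\Xi(t)\ge e^{t}$ and a deviation exceeding $\varepsilon\,\Xi(t)^{3/4}$. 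Combined with the floor on $\Xi(t)$, this forces $|\eta_t|\le\varepsilon$ for all large $t$; letting $\varepsilon=1/j\downarrow 0$ gives $\eta_t\to 0$ almost surely. For $\xi_t$ I would reuse the $\eta_t$-bound: on the same event $X(t)\ge(1-\beta)\Xi(t)-\varepsilon\,\Xi(t)^{3/4}\ge\tfrac{1-\beta}{2}\Xi(t)$ eventually, so $\Xi(t)^{3/4}\le(2/(1-\beta))^{3/4}X(t)^{3/4}$ and hence $|\xi_t|=X(t)^{-3/4}|(1-\beta)\Xi(t)-X(t)|\le(2/(1-\beta))^{3/4}\varepsilon$, whence $\xi_t\to 0$ almost surely.

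The only mildly delicate point is the random denominator $\Xi(t)^{3/4}$: the conditional estimate is informative only when $\Xi(t)$ is large, so the argument hinges entirely on already knowing that $X(t)$, and therefore $\Xi(t)$, grows at least exponentially on survival. Given that, what remains is a routine concentration-plus-Borel--Cantelli scheme, and I do not anticipate any further obstacle.
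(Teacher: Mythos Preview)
Your argument is correct.  The conditional decomposition $X(t)=(N-M)+\mathbf{1}\{M\ge 1\}$ with $M\sim\mathrm{Bin}(N,\beta)$ is exactly the structure of the \fm, the Hoeffding bound is valid, and the Borel--Cantelli step combined with the super-exponential growth of $\Xi(t)$ on survival is sound.  The passage from $\eta_t\to 0$ to $\xi_t\to 0$ via $X(t)\ge\tfrac{1-\beta}{2}\Xi(t)$ eventually is also clean.

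The paper takes the same overall scheme (conditional concentration plus a growth floor on $\Xi(t)$ plus Borel--Cantelli) but implements it differently.  Instead of Hoeffding, it uses Chebyshev's inequality, obtaining only a polynomial tail $\P(C_t^c\mid\Xi(t))\lesssim\Xi(t)^{-1/3}$; correspondingly it only needs the polynomial floor $(1-\beta)\Xi(t)>(t+1)^3(t+2)^3$ rather than your exponential one, and it works first with the exponent $2/3$ (events $A_t,C_t$) and only at the end converts to $3/4$ via $x^{2/3}=x^{3/4}x^{-1/12}$.  The passage between the $\eta_t$ and $\xi_t$ formulations is done through auxiliary implicit functions $y_1,y_2$ solving $z=y\pm y^{2/3}$ rather than your direct ratio bound.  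Your route is shorter and uses stronger inputs (Hoeffding, exponential growth) where weaker ones would suffice; the paper's route shows that nothing beyond a second-moment bound and polynomial growth is actually needed, which is why the remark following the lemma can extend it to the bounded-support case.  One small point: you cite only Theorem~\ref{Th:mainthm}, but the lemma is meant to cover type~II tails as well, so Theorem~\ref{Th:mainII} should also be invoked for the growth floor.
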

\begin{proof}
Let
\begin{align*}
A_t &:= \left \{ \left \vert(1-\beta) \Xi(t) - X(t)  \right \vert \le X(t)^{2/3} \right \} ,\quad B_t := \bigcap_{k=t}^\infty A_k,\quad B := \bigcup_{t=1}^\infty B_t\\
C_t &:= \left \{ \left \vert X(t) - (1-\beta) \Xi(t) \right \vert \le \tfrac{1}{3} (1-\beta)^{2/3}\Xi(t)^{2/3} \right \},\quad
D_t := \bigcap_{k=t}^\infty C_k,\quad D := \bigcup_{t=1}^\infty D_t,\\
E_t&:= \left \{ (1-\beta) \Xi(t) > (t+1)^3 (t+2)^3\right \},\quad
J_t := \bigcap_{k=t}^\infty E_k,\quad J := \bigcup_{t=1}^\infty J_t.
\end{align*}
By Theorems~\ref{Th:mainthm} and \ref{Th:mainII}, we have $\P(J) = p_s$.
For positive $z$, let $y_1(z)$ and $y_2(z)$ be the (unique) positive solution of the equations
\smash{$
z = y_1 + y_1^{_{2/3}}$} and \smash{$z = y_2 - y_2^{_{2/3}}$}, respectively.
Using $y_1$ and $y_2$, we write 
$$
A_t \cap E_t = \left \{  y_1\left ( (1-\beta) \Xi(t) \right ) 
\le
X(t) \le y_2\left ( (1-\beta) \Xi(t) \right )
\right \} \cap E_t.
$$
Note that if $z > 2$ we have $y_1 < z - z^{2/3}/3 < z+ z^{2/3}/3 < y_2$. 
Hence,
$
C_t \cap E_t
\subset A_t \cap  E_t
$
and, accordingly, $D_t \cap J_t \subset B_t \cap J_t$.
By Chebyshev's inequality we have 
$$
\P\left (C_t \vert \Xi(t) \right ) \ge 1-9 \frac{\beta}{(1-\beta)^{1/3}}\Xi(t)^{-1/3},
$$
which gives
\begin{align}
\P(C_t^c \vert E_t) \le  \frac{9\beta}{(t+1)(t+2)}.
\label{Eq:XxiCcEt}
\end{align}
For $t\ge \tau$, we have
$$
\P(D_t^c \cap J_\tau) \le \sum_{k=t}^\infty \P(C_k^c \cap J_\tau)
\le \sum_{k=t}^\infty \P(C_k^c \cap E_k) 
\le \sum_{k=t}^\infty \P(C_k^c \vert E_k) 
\le \sum_{k=t}^\infty \frac{9\beta}{(k+1)(k+2)}=
\frac{9\beta}{t+1},
$$
where used the definition of $D_t$ for the
first inequality, $J_\tau \subset E_k$ for the second inequality,
$\P(E_k) \le 1$ for the third inequality, and \eqref{Eq:XxiCcEt} for the last
inequality.
Therefore, for any $\tau$, we have
$
\P(D^c \cap J_\tau) = \lim_{t\rightarrow\infty} \P(D_t^c \cap J_\tau) = 0
$
and, accordingly, $\P(D \cap J) = \P(B\cap J) = p_s$.
As $ x^{2/3}=x^{3/4} x^{-1/12}$ and $X(t)$ diverges almost surely on survival,
the proof is completed.
\end{proof}
\begin{remark}
Lemma~\ref{Lem:XXi} is applicable even if the support of $\mu$ is bounded 
because $\Xi(t)$ grows at least exponentially on survival.
Therefore, regardless of the type of $G$ we have 
almost surely on survival $X(t) \sim (1-\beta)\Xi(t)$ and the
relative error of the approximation 
is at most $\Xi(t)^{-1/4}$.
\end{remark}

\noindent
{\bf Definition (for Lemma~\ref{Lem:XigivesW}).}
We define positive sequences $(a_t)_{t\ge 1}$ and $(y_t)_{t\ge 1}$ as
\begin{align*}
a_t := \frac{1}{\log (t+2)} - \frac{1}{\log(t+3)}, \quad
y_t := \frac{\log a_t}{\log (1-\beta)}.
\end{align*}
Note that $a_t$ is monotonically decreasing with
$1/a_t \sim t (\log t )^2$ and $y_t$ is monotonically increasing.
For $Y > y_t$, we define
\begin{align*}
\Wl(Y,t)&:= \inf \left \{ x>0 : 1-\left [ 1-\beta G(x)\right ]^Y \le a_t \right \},\\
\Ws(Y,t)&:= \sup \left \{ x>0 : \left [ 1-\beta  G(x)\right ]^Y\le a_t \right \} - \epsilon,
\end{align*}
where $\epsilon$ is an arbitrary small positive number.
Since $a_t < \frac{1}{2}$, we have $\Wl(Y,t) > \Ws(Y,t)$.
For $Y \le y_t$ we define $\Wl(Y,t) = \Ws(Y,t)=0$.

\begin{remark}
Since $G(x)$ is a right-continuous-left-limit function,
the purpose of introducing $\epsilon$ is to guarantee  
$\left ( 1-\beta  G(\Ws) \right )^Y \le a_t$.
Without $\epsilon$, $\left ( 1-\beta  G(\Ws) \right )^Y$ may be larger than
$a_t$.
In the case that $G(x)$ is a strictly decreasing continuous function, we rather define, for $Y > y_t$,
\begin{align*}
1-\beta G\left (\Wl(Y,t) \right ) &= (1-a_t)^{1/Y},\\
1-\beta G\left (\Ws(Y,t) \right ) &= (a_t)^{1/Y}.
\end{align*}
\end{remark}
\begin{lemma}
\label{Lem:XigivesW}
For the type I tail function, define
\begin{align*}
J:=\left \{ \lim_{t\rightarrow\infty} \frac{\log \Xi(t)}{t \logn t} = \frac{1}{\alpha} \right \}.
\end{align*}
For the type II tail function, define
\begin{align*}
J:= \left \{ \lim_{t\to\infty} \frac{\lo 2{\Xi(t)}}{\log t}=1+\frac1\alpha
\right \},
\end{align*}
for $n=1$, 
\begin{align*}
J:= \left \{\lim_{t\to\infty} \frac{\lo 3 {\Xi(t)}}{\log t}
=\frac1{1+\alpha}\right \},
\end{align*}
for $n=2$, and
\begin{align*}
J:=\left \{ \lim_{t\to\infty} \frac{1}{\lo{n-1} t}\log\left ( \frac{\lo 2 {\Xi(t)}}{t}\right )=-\alpha \right \},
\end{align*}
for $n \ge 3$. We have proved $\P(J)=p_s$.
Let
\begin{align*}
A_t &:= \{ \Ws(\Xi(t),t) < W_t \le \Wl(\Xi(t),t)\},\quad B_t := \bigcap_{k=t}^\infty A_k,\quad
B := \bigcup_{t=1}^\infty B_t.
\end{align*}
Then, $\P(B \cap J) = p_s$.
\end{lemma}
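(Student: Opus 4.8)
The engine of the argument is Lemma~\ref{Th:Wa}: conditionally on $\Xi(t)=Y$, the variable $W_t$ has cumulative distribution function $F_Y(x):=(1-\beta G(x))^Y$, which is non-decreasing and, since $G$ is right-continuous, also right-continuous. The plan is first to establish the one-step estimate
\begin{align*}
\P\big(A_t \,\big|\, \Xi(t)=Y\big)\ge 1-2a_t \qquad\text{for every } t \text{ and every integer } Y>y_t.
\end{align*}
To see this, note that $\{x>0:F_Y(x)\ge 1-a_t\}$ is an up-interval which, by right-continuity of $F_Y$, contains its own infimum $\Wl(Y,t)$; hence $\P\big(W_t>\Wl(Y,t)\,\big|\,\Xi(t)=Y\big)=1-F_Y(\Wl(Y,t))\le a_t$. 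Symmetrically, $\{x>0:F_Y(x)\le a_t\}$ is a down-interval, and the auxiliary $\epsilon$ in the definition of $\Ws(Y,t)$ is precisely what places $\Ws(Y,t)$ strictly inside it, so that $\P\big(W_t\le \Ws(Y,t)\,\big|\,\Xi(t)=Y\big)=F_Y(\Ws(Y,t))\le a_t$ even across a jump of $G$. A union bound over these two complementary sub-events of $A_t^c$ gives the estimate. The restriction $Y>y_t$ is exactly what makes $\Wl,\Ws$ nontrivial (for $Y\le y_t$ both equal $0$ and the bound is vacuous), which is why the event $\{\Xi(t)>y_t\}$ has to be brought in.

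Next I would set $\tilde G_t:=\{\Xi(t)>y_t\}$ and, by conditioning on $\Xi(t)$ and using the previous step, bound
\begin{align*}
\P\big(A_t^c\cap \tilde G_t\big)=\E\big[\mathbf{1}_{\tilde G_t}\,\P(A_t^c\mid\Xi(t))\big]\le 2a_t.
\end{align*}
The crucial observation is that $\sum_{t\ge 1}a_t$ telescopes,
\begin{align*}
\sum_{t=1}^\infty a_t=\sum_{t=1}^\infty\Big(\tfrac1{\log(t+2)}-\tfrac1{\log(t+3)}\Big)=\tfrac1{\log 3}<\infty,
\end{align*}
so that $\sum_t\P(A_t^c\cap \tilde G_t)<\infty$ and the Borel--Cantelli lemma gives $\P\big(A_t^c\cap\tilde G_t\text{ infinitely often}\big)=0$.

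It then remains to rule out the other source of failure, $\{\tilde G_t^c\text{ infinitely often}\}$, on the event $J$. By Theorems~\ref{Th:mainthm} and~\ref{Th:mainII}, on $J$ the quantity $\log\Xi(t)$ grows faster than any power of $\log t$ — it is asymptotic to $\tfrac1\alpha t\logn t$ in the type~I case and diverges even faster in the type~II case — whereas $y_t=\log a_t/\log(1-\beta)$ satisfies $y_t\sim\log\!\big(t(\log t)^2\big)/\lvert\log(1-\beta)\rvert$, which only grows like $\log t$; hence on $J$ one has $\Xi(t)>y_t$ for all large $t$, i.e. $J\cap\{\tilde G_t^c\text{ i.o.}\}=\emptyset$. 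Since $B^c=\{A_t^c\text{ i.o.}\}$ and $\{A_t^c\text{ i.o.}\}\subseteq\{A_t^c\cap\tilde G_t\text{ i.o.}\}\cup\{\tilde G_t^c\text{ i.o.}\}$, we conclude $\P(B^c\cap J)=0$, and therefore $\P(B\cap J)=\P(J)=p_s$. The step I expect to be the main obstacle is the first one: reading off clean one-sided probability bounds from the $\inf$/$\sup$ definitions of $\Wl$ and $\Ws$ when $G$ may jump, using right-continuity on one side and the auxiliary $\epsilon$ on the other exactly as they were designed; everything afterwards (the telescoping summability of $a_t$ and the super-fast growth of $\Xi$ on $J$) is routine.
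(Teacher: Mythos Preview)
Your proof is correct and follows essentially the same route as the paper: both derive the one-step bound $\P(A_t^c\mid\Xi(t)=Y)\le 2a_t$ for $Y>y_t$ from Lemma~\ref{Th:Wa}, use the telescoping summability $\sum_t a_t=1/\log 3$, and use that on $J$ one has $\Xi(t)>y_t$ eventually. The only cosmetic difference is that you package the conclusion via Borel--Cantelli on $\{A_t^c\cap\tilde G_t\ \text{i.o.}\}$, whereas the paper writes the equivalent tail-sum bound $\P(B_t^c\cap D_\tau)\le 2/\log(t+2)\to 0$ directly.
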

\begin{proof}
Let $C_t := \{ \Xi(t) > y_t\}$, $D_\tau := \bigcap_{k=\tau}^\infty C_k$,
and $D := \bigcup_{\tau=1}^\infty D_\tau$.
Since $J \subset D$, it is enough to prove $\P(B\cap D) = p_s$, because
$\P(B\cap J) = \P(B\cap D) - \P (B \cap (D \setminus J))$ and
$\P(D\setminus J)=0$.
For any integer $Y > y_t$, we have
\begin{align*}
\P(A_t^c \vert \Xi(t)=Y ) 
&= \P(W_t \le \Ws \vert Y )
+ 1-\P(W_t \le \Wl \vert Y )\\
&= \left (1-\beta G(\Ws) \right )^{Y}
+ 1 - \left (1-\beta G(\Wl) \right )^{Y}
\le 2 a_t,
\end{align*}
where we have used Lemma~\ref{Th:Wa}.
Accordingly,
$
\P(A_t^c \vert C_t ) \le 2 a_t.
$
If $t \ge \tau$, then we have
$$
\P(B_t^c \cap D_\tau) \le \sum_{k=t}^\infty \P(A_k^c \cap D_\tau)
\le \sum_{k=t}^\infty \P(A_k^c \cap C_k) 
\le \sum_{k=t}^\infty \P(A_k^c \vert C_k) \le \sum_{k=t}^\infty 2 a_k
= \frac{2}{\log(t+2)}.
$$
Therefore, for any $\tau$, we have
$
\P(B^c \cap D_\tau) = \lim_{t\rightarrow\infty} \P(B_t^c \cap D_\tau) = 0
$
and so $\P(B^c \cap D) = 0$ and the proof is completed.
\end{proof}

\begin{remark}
Assume $G$ is a strictly decreasing continuous function.
Let $w_0(Y,t)$ and $w_1(Y,t)$ be the solution of 
\begin{align*}
-\log G(w_1) &= \log Y + \log \tfrac{2\beta}{a_t},\\
-\log G(w_0) &= \log Y +\log \beta + \lo 2 {\tfrac{1}{a_t}}.
\end{align*}
For sufficiently large $t$ and $Y$,
we have $1-(1-a_t)^{1/Y} > a_t/(2 Y)$ 
and $1-a_t^{1/Y} < - (\log a_t)/Y$,
which gives $G(w_1)<G(\Wl) < G(\Ws) < G(w_0)$.
Since $G$ is a decreasing function, we have
$w_0 \le \Ws \le \Wl \le w_1$.
Hence, even if we define $A_t$ by the condition $w_0 \le W_t \le w_1$
with $Y=\Xi(t)$,
Lemma~\ref{Lem:XigivesW} remains valid. \medskip

Now consider $G(x) = \exp(-x^\alpha)$, 
which entails $\alpha W_t^\alpha\sim t \log t$.
Then, we have
\begin{align*}
w_1 &= \left [ \log \Xi(t) \right ]^{1/\alpha}
\left [ 1 + \frac{\log (2 \beta/a_t)}{\log \Xi(t)} \right ]^{1/\alpha},\\
w_0 &= \left [ \log \Xi(t) \right ]^{1/\alpha}
\left [ 1 + \frac{\log \beta + \lo 2{1/a_t}}{\log \Xi(t)} \right ]^{1/\alpha}.
\end{align*}
We define a random sequence $(b_t)_{t\ge 1}$ by
$
W_t = \left [ \log X(t) \right ]^{1/\alpha} 
\exp(b_t/t).
$
Then, the above discussion together with  Lemma~\ref{Lem:XXi} shows that,
almost surely on survival,
$$
0 \leq 
\liminf_{t\rightarrow\infty} b_t \leq
\limsup_{t\rightarrow\infty} b_t \le 1.
$$
Writing $W_t = u_1(t) e^{c_t}$ we see from Theorem~\ref{Th:mainthm} that, almost surely on survival,
$
\lim\limits_{t\rightarrow\infty} c_t = 0.
$
Hence, 
\begin{align}
\label{Eq:XW_str}
\log X(t) = u_1(t)^\alpha \exp \left ( \alpha c_t - \alpha \tfrac{b_t}{t} \right ),\quad W_t = u_1(t) e^{c_t}.
\end{align}
Unfortunately, \eqref{Eq:XW_str} does not give an accurate estimate of $X(t)$, because
$$
X(t) \exp ( -u_1(t)^\alpha) \ge \exp \left ( \alpha u_1(t)^\alpha c_t - 
\alpha u_1(t)^\alpha \tfrac{b_t}{t} \right )
$$
and it is unclear whether the right hand side will diverge or not.
Since $X(t)$ rather than $\log X(t)$ is necessary to study the \efd and we do not have a tool to tame $c_t$ and $b_t$, \eqref{Eq:XW_str} is too coarse to give a proof for Theorem~\ref{Th:efd} even for this special $G$. So we are forced to introduce simplified models, the \dfm and the \sfm, to prove variants of Theorem~\ref{Th:efd}.
\end{remark}

\subsection{\label{Sec:deter}Deterministic \fm and its \efd}
Lemma~\ref{Thm:efd} shows that if $G$ is of type I with
$n=1$ or with $n=2$ and $\alpha<1$ then 
almost surely on survival
$$
\lim_{t \rightarrow\infty} \frac{W_t}{u_n(t)} 
= \lim_{k\rightarrow\infty} \frac{N_k(k+s)}{(1-\beta)^{s} W_k^{s}} = 1,
$$
for any nonnegative integer $s$.
Hence, setting $W_k = u_n(k)$ and $N_k(t) = (1-\beta)^{t-k}u_n(k)^{t-k}$ for all large $k$ and $t\ge k$ gives a good approximation of the models on survival. This approximation is especially convenient for the \fm. 
In this context, we are motivated to introduce
the deterministic \fm as follows.
\medskip

\noindent
{\bf Definition of the \dfm.}
At each generation $k> 0$ a new mutant with fitness $W_k= u_n(k)$ appears. 
In case $u_n(k)$ is ill defined, we set $W_k =1/(1-\beta)$.
The number of non-mutated descendants of $W_k$ grows deterministically as
\begin{align}
\mkd(t) := (1-\beta)^{t-k}W_k^{t-k}.
\label{Eq:Nkd}
\end{align}
where we neglect not only stochasticity but also the error due to the discreteness of $N_k(t)$.
Note that in the \dfm only  type I tail functions are under consideration and we make no restriction on $n$ and $\alpha$.
\bigskip

Notice that we added the superscript $\mathrm{D}$ in $\mkd$ to 
discern them from their stochastic counterparts.
Since no fluctuation is present, the limit behavior of the \efd for the \dfm 
becomes a problem of calculus.
In what follows, we will find a limit theorem of the \efd for the \dfm.
To this end, we begin with the following elementary lemma.

\begin{lemma}
\label{Lem:rudiment}
Assume $f$ is a positive continuous function that has a unique local maximum at $x_c$ 
in a domain $[a-1,b+1]$, where $a,b$ are integers and $x_c$ need not be an integer. That is, $f(x) < f(y)$ if $a-1\le x< y\le x_c$ and
$f(x) > f(y)$ if $ x_c \le x < y \le b+1$. Assume $a<x_c < b$.
Let 
$$
F(x) := \sum_{k=a}^{\lfloor x \rfloor} f(k).
$$
Then, for any $a\le x\le b$,
$$
\left \vert F(x)- \int_a^x f(y) dy \right \vert \le 7 f(x_c).
$$
\end{lemma}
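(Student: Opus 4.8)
The plan is to bound the difference between the sum $F(x)$ and the integral $\int_a^x f(y)\,dy$ by comparing each summand to a corresponding piece of the integral, handling separately the region to the left of the local maximum $x_c$ (where $f$ is increasing) and the region to the right (where $f$ is decreasing). On a monotone stretch, the standard integral test inequality applies: if $f$ is increasing on $[k-1,k+1]$ then $f(k)\le \int_k^{k+1} f(y)\,dy$ and $f(k)\ge \int_{k-1}^{k} f(y)\,dy$, and symmetrically on a decreasing stretch. So the accumulated telescoping error away from the peak is controlled by a single boundary term of size at most $f(x_c)$ on each side.

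First I would fix $x\in[a,b]$ and split $F(x)=\sum_{k=a}^{\lfloor x\rfloor} f(k)$ at the integer $k^\ast:=\lfloor x_c\rfloor$ (or $\lceil x_c\rceil$), writing $F(x)=\sum_{k=a}^{k^\ast} f(k)+\sum_{k=k^\ast+1}^{\lfloor x\rfloor} f(k)$, with the second sum empty if $\lfloor x\rfloor\le k^\ast$. On the first block, $f$ is increasing on $[a-1,x_c]$, so for $a\le k\le k^\ast$ we have $\int_{k-1}^{k} f\le f(k)\le \int_{k}^{k+1} f$; summing gives $\int_{a-1}^{k^\ast} f \le \sum_{k=a}^{k^\ast} f(k)\le \int_{a}^{k^\ast+1} f$, so this block differs from $\int_{a}^{k^\ast} f$ by at most $\int_{a-1}^{a} f + \int_{k^\ast}^{k^\ast+1} f \le f(a)+f(x_c)\le 2f(x_c)$, using monotonicity and that $f(a)\le f(x_c)$. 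On the second block, $f$ is decreasing on $[x_c,b+1]$, and a symmetric comparison shows it differs from $\int_{k^\ast}^{\lfloor x\rfloor} f$ by at most $2f(x_c)$. Finally, $\int_{\lfloor x\rfloor}^{x} f$ is a genuine part of $\int_a^x f$ that is not captured by the sum but has length $<1$, contributing at most $f(x_c)$ more; and if $x$ lies in the left block one argues analogously. Combining the pieces yields the bound $|F(x)-\int_a^x f(y)\,dy|\le C f(x_c)$ with $C$ a small absolute constant, and a careful accounting of the boundary terms (at $a$, at the peak, at $\lfloor x\rfloor$, and the fractional tail) gives $C=7$, which is the claimed (non-optimal but sufficient) constant.

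The only mildly delicate point is bookkeeping near the peak: $x_c$ need not be an integer, and the two integers $\lfloor x_c\rfloor$ and $\lceil x_c\rceil$ bracketing it may both occur as summation indices while $f$ is no longer monotone across the single interval $[\lfloor x_c\rfloor,\lceil x_c\rceil]$. The clean way around this is to observe that on \emph{any} interval of length $1$ we have $f\le f(x_c)$ everywhere, so those one or two "bad" summands and the corresponding unit pieces of the integral each contribute at most $f(x_c)$ and can simply be absorbed into the constant rather than analysed via monotonicity. This is also the reason the hypothesis extends $f$ to $[a-1,b+1]$: it guarantees the comparison intervals $[k-1,k]$ and $[k,k+1]$ stay inside the domain at the endpoints $k=a$ and $k=\lfloor x\rfloor$. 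Beyond this, the argument is entirely elementary and the main ``obstacle'' is purely one of keeping the constant explicit while not double-counting the peak region.
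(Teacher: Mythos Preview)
Your approach is correct and is essentially the same as the paper's: compare the sum to the integral via the standard integral-test bounds on each monotone stretch, and absorb the one or two summands and unit intervals straddling the peak using the crude bound $f\le f(x_c)$. The paper's proof is organised slightly differently---it introduces the step functions $f_-(x)=f(\lfloor x\rfloor)$ and $f_+(x)=f(\lceil x\rceil)$ and splits into the three cases $\lfloor x\rfloor<\lfloor x_c\rfloor$, $\lfloor x\rfloor=\lfloor x_c\rfloor$, $\lfloor x\rfloor>\lfloor x_c\rfloor$---but the underlying inequalities are exactly those you wrote down, and the constant $7$ arises from the same boundary bookkeeping you describe.
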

\begin{proof}
Define
$
f_-(x) := f(\lfloor x \rfloor)$ and $ f_+(x) = f(\lceil x \rceil)$.
Then, for $a \le x \le b$,
$$
F(x) = \int_a^{\lfloor x \rfloor+1} f_-(y) dy = \int_{a-1}^{\lfloor x \rfloor} f_+(y) dy.
$$
Note that $f_-(x) \le f(x) \le f_+(x)$ if $x < \lfloor x_c\rfloor$ and $f_-(x) \ge f(x) \ge f_+(x)$
if $x \ge \lceil x_c \rceil$.
We abbreviate $m_c := \lfloor x_c \rfloor$ and $m := \lfloor x \rfloor$.
If $m< m_c$, then
$$
F(x) = \int_{a-1}^m f_+(y) dy \ge 
\int_{a-1}^a f(y) dy+\int_{a}^x f(y) dy - \int_{m}^x f(y) dy
$$
and
$$
F(x) = \int_a^{m+1} f_-(y) dy 
\le \int_a^x f(y) dy + \int_x^{m+1} f(y),
$$
which gives
$$
\left \vert F(x)- \int_a^x f(y) dy \right \vert \le  2 f(x_c).
$$
For $m=m_c$, we use
$$
\int_{a-1}^a f(y) dy + \int_a^{x} f(y) dy -\int_{m-1}^{x} f(y) dy\le F(m-1)
\le \int_{a}^{x} f(y)dy - \int_{m}^x f(y) dy
$$
and $F(x) = F(m-1) + f(m)$, to get
$
\left \vert F(x)- \int_a^x f(y) dy \right \vert \le  4 f(x_c).
$
If $m > m_c$, we consider
$$
F(x) - F(x_c) =\sum_{k=m_c+1}^m f(k) = \int_{m_c+1}^{m+1} f_-(y) dy = f(m_c+1) + \int_{m_c+1}^m f_+(y) dy,
$$
which gives
\begin{align*}
F(x)-F(x_c) &\ge \int_{x_c}^x f(y) dy - \int_{x_c}^{m_c+1} f(y) + \int_{x}^{m+1} f(y) dy,\\
F(x) - F(x_c) &\le f(x_c) +\int_{x_c}^x f(y) dy - \int_{x_c}^{m_c+1} f(y) - \int_{m}^x f(y) dy,
\end{align*}
and, therefore,
$
\left \vert F(x) - F(x_c) - \int_{x_c}^x f(y) dy \right \vert \le  3 f(x_c).
$
Since
\begin{align*}
\left \vert F(x) - \int_a^x f(y) dy \right \vert 
&\le \left \vert F(x_c) - \int_{a}^{x_c} f(y) dy \right \vert 
+ 
\left \vert F(x) - F(x_c) - \int_{x_c}^x f(y) dy \right \vert 
\le  7 f(x_c),
\end{align*}
we have the desired result for any $a\le x\le b$.
\end{proof}

\noindent
{\bf Definition.}
We define 
\begin{align*}
H(x,t) &:= \left [ (1-\beta) u_n(x) \right ]^{t-x},\quad
h(x,t):=\log H(x,t),\\
\omega_1(x) &:= (1-\beta) \omega_W \left ( \lo{n-1}{x} \right )
,\\
L_j(x) &:= (-1)^{j-1} \left ( \frac{d}{dx} 
\right )^j \logn{x},\quad
\left . \Omega_j(x) := (-1)^{j-1}\left ( y \frac{d}{dy} \right )^j
\log \omega_W(y) \right \vert_{y=\lo{n-1}{x}},
\end{align*}
where $x \le t$ and $x$ is assumed large enough so that the above definition makes sense.
Note that $(1-\beta)u_n(x) = (\lo{n-1}{x})^{1/\alpha} \omega_1(x)$ and
$\mkd(t)=H(k,t)$.
Also note that
$$
\frac{d}{dx}\log \omega_1(x) = L_1(x) \Omega_1(x), \quad
\frac{d}{dx} \Omega_j(x) = - L_1(x) \Omega_{j+1}(x).
$$
We assume  
$
\lim_{x\rightarrow\infty} \Omega_j(x) = 0
$ for any integer $j \ge 1$; see {\bf (A4)}.

\begin{lemma}
\label{Lem:x0t0}
There are $x_0$ and $t_0$ such that 
\begin{align}
\frac{\partial^2 h(x,t)}{\partial x^2}<0,\quad
\frac{\partial^3 h(x,t)}{\partial x^3}>0,\quad
\frac{\partial^4 h(x,t)}{\partial x^4}<0,
\label{Eq:wcon}
\end{align}
for all $x \ge x_0-1$ and for all $t \ge t_0\ge x_0-1$ with $x\le t$.
\end{lemma}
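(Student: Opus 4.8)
The plan is to reduce the statement to the sign pattern of the first four $x$-derivatives of the single-variable function
\begin{align*}
\phi(x) := \log\bigl[(1-\beta)u_n(x)\bigr] = \tfrac1\alpha\logn{x} + \log\omega_1(x),
\end{align*}
since $h(x,t) = (t-x)\phi(x)$. By the Leibniz rule one has
\begin{align*}
\frac{\partial^{\,j+1} h(x,t)}{\partial x^{\,j+1}} = (t-x)\,\phi^{(j+1)}(x) - (j+1)\,\phi^{(j)}(x), \qquad j=1,2,3,
\end{align*}
so the claimed inequalities \eqref{Eq:wcon} follow at once provided that, for all sufficiently large $x$, we have $\phi'(x)>0$, $\phi''(x)<0$, $\phi'''(x)>0$ and $\phi^{(4)}(x)<0$: on the region $x\le t$ the factor $t-x$ is non-negative, so the two summands always have the same sign and no hypothesis on $t$ beyond $t\ge x$ is needed (one may simply take $t_0=x_0-1$). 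Choosing $x_0$ large also guarantees, via \textbf{(A3)}, that $\phi$ is $C^4$ on $[x_0-1,\infty)$.

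The core of the argument is therefore the sign of $\phi^{(j)}$ for $j\le 4$. Write $\phi^{(j)}(x) = \tfrac1\alpha(-1)^{j-1}L_j(x) + \bigl(\log\omega_1\bigr)^{(j)}(x)$. I would first record the elementary fact that the iterated logarithm has derivatives alternating in sign, i.e.\ $L_j(x)>0$ for $x$ large (a short induction on $n$), together with the crude size estimates that $L_a(x)$ is comparable to $L_1(x)/x^{a-1}$ and that $xL_1(x)\le1$ for $x$ large; these imply that any product of $L$'s whose orders add up to $j$ is $O\bigl(L_j(x)\bigr)$. Next, using the identities recorded just above the lemma — $\tfrac{d}{dx}\log\omega_1 = L_1\Omega_1$, $\tfrac{d}{dx}\Omega_k = -L_1\Omega_{k+1}$ and $L_k'=-L_{k+1}$ — one checks by induction that $\bigl(\log\omega_1\bigr)^{(j)}$ is a finite sum of terms, each of the form (product of $L$'s of total order $j$)$\,\times\,\Omega_b$ with $b\ge1$. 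Since $\Omega_b(x)\to0$ by \textbf{(A4)} and each $L$-product is $O\bigl(L_j(x)\bigr)$, this yields $\bigl(\log\omega_1\bigr)^{(j)}(x)=o\bigl(L_j(x)\bigr)$. Hence $\phi^{(j)}(x)=\tfrac1\alpha(-1)^{j-1}L_j(x)\bigl(1+o(1)\bigr)$ has sign $(-1)^{j-1}$ for all large $x$; in particular $\phi'(x)=L_1(x)\bigl(\tfrac1\alpha+\Omega_1(x)\bigr)>0$.

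Finally I would assemble the pieces: for $x\ge x_0-1$ and $x\le t$,
\begin{align*}
\partial_x^2 h = (t-x)\phi''(x) - 2\phi'(x) \le -2\phi'(x) < 0,\qquad
\partial_x^3 h = (t-x)\phi'''(x) - 3\phi''(x) \ge -3\phi''(x) > 0,
\end{align*}
and $\partial_x^4 h = (t-x)\phi^{(4)}(x) - 4\phi'''(x) \le -4\phi'''(x) < 0$, which is exactly \eqref{Eq:wcon}. The only genuinely delicate point is the bookkeeping in the middle step: one must verify that every correction term produced by differentiating $\log\omega_1$ is dominated by the leading term $\tfrac1\alpha L_j$, and this is precisely where assumption \textbf{(A4)} (forcing all the $\Omega_b$ to vanish at infinity) enters; everything else is routine calculus with iterated logarithms.
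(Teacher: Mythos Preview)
Your argument is correct. Both your proof and the paper's rest on the same ingredients—the asymptotics $L_j\sim (j-1)!\,L_1/x^{j-1}$ from \eqref{Eq:Li} and the vanishing of the $\Omega_j$ from {\bf (A4)}—but the organization differs in a way worth noting. The paper computes the derivatives directly and packages them as
\[
\partial_x^{\,j}h \;=\; (-1)^{j-1}\,\tfrac{t}{\alpha}\,L_j(x)\,\phi_{j-1}(x,t),\qquad j=2,3,4,
\]
where each $\phi_{j-1}$ is an explicit function of both $x$ and $t$ (through the ratio $x/t$), and then argues that $\phi_{j-1}(x,t)$ is positive for large $x$ and $t$. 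Your route is to first factor $h(x,t)=(t-x)\phi(x)$ and apply Leibniz, which reduces everything to the sign pattern of the single-variable derivatives $\phi^{(j)}(x)$. This is cleaner: it makes transparent that no separate largeness condition on $t$ is needed beyond $t\ge x$ (so one may indeed take $t_0=x_0-1$), and it isolates the only nontrivial step—showing $(\log\omega_1)^{(j)}=o(L_j)$—as a pure $x$-asymptotic. The paper's packaging, on the other hand, has the advantage that the functions $\phi_{j-1}$ reappear later (e.g.\ in Lemma~\ref{Lem:xc_at} via $\phi_j(x_c)\sim1$), so its slightly heavier bookkeeping pays off downstream.
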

\begin{proof}
First observe that
\begin{align}
\nonumber
L_1(x) &= \left ( \prod_{k=0}^{n-1} \lo{k}x\right)^{-1},\\
L_j(x) &\sim \frac{(j-1)!L_1(x)}{x^{j-1}} = \frac{(j-1)!}{x^j}  \left ( \prod_{k=1}^{n-1} \lo{k}x\right)^{-1}, \quad \frac{L_j(x)}{L_{j+1}(x)} \sim \frac{x}j,
\label{Eq:Li}
\end{align}
where we use the convention $\prod_{k=1}^0 :=1$. We define
\begin{align*}
\phi_1(x,t)&:=
1 + \alpha\Omega_1(x) + \alpha \frac{L_1^2}{L_2}\Omega_2(x)   + \frac{x}{t}
\left [ 
\left ( 1 + \alpha\Omega_1(x) \right ) 
\left ( \frac{2L_1}{L_2 x} - 1 \right ) - \alpha \frac{L_1^2}{L_2} \Omega_2(x)
\right ],\\
\phi_2(x,t)&:= \phi_1(x,t) - \frac{L_2}{L_3}\frac{\partial \phi_1}{\partial x},\qquad
\phi_3(x,t) := \phi_2(x,t) - \frac{L_3}{L_4}\frac{\partial \phi_2}{\partial x}.
\end{align*}
We write down the derivatives
\begin{equation}
\label{Eq:dhdx}
\begin{aligned}
&\frac{\partial h}{\partial x} = - \frac{1}{\alpha} \logn{x} -
\log \omega_1 (x) + (t-x) L_1(x) \left ( \frac{1}{\alpha} + \Omega_1(x) \right ), 
\frac{\partial^2 h}{\partial x^2} 
= -\frac{t}\alpha L_2(x) \phi_1(x,t),\\
&\frac{\partial^3 h}{\partial x^3}
= \frac{t}{\alpha } L_3(x) \phi_2(x,t), \qquad
\frac{\partial^4 h}{\partial x^4}
= -\frac{t}{\alpha} L_4(x) \phi_3(x,t).
\end{aligned}
\end{equation}
As, by \eqref{Eq:Li}, $\phi_j(x,t)$ is positive for all sufficiently large $x$ and $t$ existence of $x_0$ and~$t_0$
follows.
\end{proof}

\begin{remark}
We fix such $x_0$ and $t_0$ in the following and
treat $x_0$ as the initial generation and we consider
only $t \ge t_0$.
\end{remark}
\begin{lemma}
\label{Lem:xc_at}
Let $x_c(t)$ be the location of the maximum of $h(x,t)$ for given $t$
and let 
\begin{align*}
\kappa_t :=& -\left . \frac{\partial^2 h}{\partial x^2} \right \vert_{x=x_c},\quad
d_t :=\frac{1}{3!} \left . \frac{\partial^3 h}{\partial x^3} \right \vert_{x=x_c}.
\end{align*}
Then,
\begin{align}
\label{Eq:xc}
x_c &\sim t  \prod_{k=1}^n \frac1{\lo{k}{t}},\\
\label{Eq:atdt}
\kappa_t &\sim \frac{\lo{n}{t}}{\alpha t} 
\prod_{k=1}^n \lo{k}{t} \sim \frac{\logn{t}}{\alpha x_c},\quad
d_t \sim \frac{\logn{t}}{3 \alpha t^2}
\prod_{k=1}^n \left ( \lo{k}{t}\right )^2.
\end{align}
\end{lemma}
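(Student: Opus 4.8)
The plan is to work entirely from the explicit derivative formulas \eqref{Eq:dhdx} obtained in the proof of Lemma~\ref{Lem:x0t0}, so that everything reduces to elementary asymptotics of iterated logarithms via \eqref{Eq:Li}. First I would pin down $x_c(t)$. By Lemma~\ref{Lem:x0t0} the map $x\mapsto h(x,t)$ is strictly concave on $[x_0-1,t]$ for $t\ge t_0$, hence $\partial_x h(\cdot,t)$ is strictly decreasing there. From the formula for $\partial_x h$ in \eqref{Eq:dhdx} we have $\partial_x h(t,t)=-\tfrac{1}{\alpha}\logn{t}-\log\omega_1(t)\to-\infty$, whereas---after enlarging $x_0$ if necessary so that $\tfrac{1}{\alpha}+\Omega_1(x_0)>0$---the term $(t-x_0)L_1(x_0)\bigl(\tfrac{1}{\alpha}+\Omega_1(x_0)\bigr)$ is linear in $t$ with positive slope, so $\partial_x h(x_0,t)\to+\infty$. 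Thus for all large $t$ there is a unique critical point $x_c(t)\in(x_0,t)$, which is the maximum, and since $\partial_x h(x,t)\to+\infty$ uniformly on any bounded $x$-range, $x_c(t)\to\infty$.

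Next I would solve the critical equation $\partial_x h(x_c,t)=0$, which by \eqref{Eq:dhdx} reads
\[
(t-x_c)\,L_1(x_c)\,\bigl(1+\alpha\Omega_1(x_c)\bigr)=\logn{x_c}+\alpha\log\omega_1(x_c).
\]
Since $\Omega_1(x)\to0$ and, by \eqref{Eq:conomega} and \eqref{Eq:omega}, $\log\omega_1(x)=o(\logn{x})$, this yields $(t-x_c)L_1(x_c)\sim\logn{x_c}$. Inserting $L_1(x)=\bigl(\prod_{k=0}^{n-1}\lo{k}{x}\bigr)^{-1}$ from \eqref{Eq:Li} gives $t-x_c\sim x_c\prod_{k=1}^n\lo{k}{x_c}$, hence $t\sim x_c\prod_{k=1}^n\lo{k}{x_c}$. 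Taking logarithms, $\log t=\log x_c+O(\lo{2}{x_c})=\log x_c\,(1+o(1))$, and iterating this one obtains $\lo{k}{x_c}\sim\lo{k}{t}$ for every $k\ge1$; substituting back into $t\sim x_c\prod_{k=1}^n\lo{k}{x_c}$ produces \eqref{Eq:xc}.

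Finally, \eqref{Eq:atdt} is read off from the formulas for $\partial_x^2 h$ and $\partial_x^3 h$ in \eqref{Eq:dhdx}: $\kappa_t=\tfrac{t}{\alpha}L_2(x_c)\phi_1(x_c,t)$ and $d_t=\tfrac{t}{6\alpha}L_3(x_c)\phi_2(x_c,t)$. One checks $\phi_1(x_c,t)\to1$ because the $\Omega_j$-terms vanish, $L_1^2/L_2$ stays bounded by \eqref{Eq:Li}, the remaining bracket is bounded, and $x_c/t\to0$; and $\phi_2(x_c,t)=\phi_1(x_c,t)-\tfrac{L_2}{L_3}\partial_x\phi_1(x_c,t)\to1$ because $L_2/L_3\sim x_c/2$ while $\partial_x\phi_1(x_c,t)=o(1/x_c)$ by \eqref{Eq:Li} and the identities $\tfrac{d}{dx}\Omega_j=-L_1\Omega_{j+1}$. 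Then \eqref{Eq:Li} gives $L_2(x_c)\sim x_c^{-2}\prod_{k=1}^{n-1}(\lo{k}{x_c})^{-1}$ and $L_3(x_c)\sim 2x_c^{-3}\prod_{k=1}^{n-1}(\lo{k}{x_c})^{-1}$, and combining these with \eqref{Eq:xc} and $\lo{k}{x_c}\sim\lo{k}{t}$ gives the stated asymptotics for $\kappa_t$ and $d_t$. I expect the bootstrap step---propagating $\lo{k}{x_c}\sim\lo{k}{t}$ through the implicit relation $t\sim x_c\prod_{k=1}^n\lo{k}{x_c}$ while controlling the corrections $\alpha\Omega_1$ and $\alpha\log\omega_1$ uniformly in $t$---to be the main, though essentially routine, obstacle, the only point needing a genuinely careful estimate being $\partial_x\phi_1(x_c,t)=o(1/x_c)$.
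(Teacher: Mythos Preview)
Your proposal is correct and follows essentially the same route as the paper's proof: set $\partial_x h(x_c,t)=0$ using \eqref{Eq:dhdx}, extract $t\sim x_c\prod_{k=1}^n\lo{k}{x_c}$, transfer $\lo{k}{x_c}\sim\lo{k}{t}$, then plug $\phi_j(x_c,t)\to1$ and the asymptotics \eqref{Eq:Li} for $L_2,L_3$ into the formulas for $\partial_x^2 h$ and $\partial_x^3 h$. The paper's argument is just terser---it asserts $\phi_j(x_c)\sim1$ without the explicit check you give---so your added details on existence of $x_c$ and the estimate $\partial_x\phi_1(x_c,t)=o(1/x_c)$ are welcome elaborations rather than a different method.
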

\begin{proof}
From \eqref{Eq:dhdx} and \eqref{Eq:wcon}, we have
\begin{align*}
0 
= - \frac{1}{\alpha} \logn{x_c} -
\log \omega_1 (x_c)
+ (t-x_c) L_1(x_c) \left ( \frac{1}{\alpha}
+ \Omega_1(x_c) \right ),
\end{align*}
for given $t$. Obviously, the solution of the equation diverges
with $t$, so 
$x_c$ satisfies
$$
t \sim \frac{\logn {x_c} }{ L_1(x_c)} = x_c \prod_{k=1}^n \lo{k}{x_c}.$$
Therefore,
$$
x_c \sim 
t \prod_{k=1}^n \frac1{\lo{k}{x_c}}
\sim 
t \prod_{k=1}^n \frac1{\lo{k}{t}}.
$$
Considering $\phi_j(x_c)\sim 1$ and using \eqref{Eq:Li}, we get 
the desired result.
\end{proof}
\begin{remark}
In the following, $t_0$ is further assumed so large that
$x_c > x_0$ for all $t > t_0$.
\end{remark}
\begin{lemma}
\label{Lem:hhx}
$$
\left \vert h(x,t) - h(x_c,t) \right \vert 
\le \frac{\kappa_t}{2} (x-x_c)^2 \left ( 1 + \frac{2 d_t}{\kappa_t}\vert x-x_c\vert \right ).
$$
\end{lemma}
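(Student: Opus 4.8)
The plan is to compare $h(\cdot,t)$ with its Taylor polynomial at the maximiser $x_c$, the whole content being the sign pattern of the derivatives recorded in Lemma~\ref{Lem:x0t0}. Since $h(\cdot,t)$ is strictly concave there, $x_c$ is its unique maximum, so $\partial_x h(x_c,t)=0$ and $h(x,t)\le h(x_c,t)$; hence $|h(x,t)-h(x_c,t)|=h(x_c,t)-h(x,t)\ge 0$, and by the definitions in Lemma~\ref{Lem:xc_at} we have $\partial_x^2 h(x_c,t)=-\kappa_t$ and $\partial_x^3 h(x_c,t)=6d_t$. Thus it suffices to bound $h(x_c,t)-h(x,t)$ from above by $\tfrac{\kappa_t}{2}(x-x_c)^2+d_t|x-x_c|^3$, and I would split according to whether $x\ge x_c$ or $x\le x_c$.

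For $x\ge x_c$ I would use the exact integral remainder $h(x,t)=h(x_c,t)+\int_{x_c}^{x}(x-z)\,\partial_z^2 h(z,t)\,dz$ (the first-order term vanishes). Since $\partial_x^3 h(\cdot,t)>0$ the function $\partial_x^2 h(\cdot,t)$ is increasing, and being also negative by Lemma~\ref{Lem:x0t0} it lies in $[-\kappa_t,0)$ on $[x_c,x]$; substituting this bound gives at once $0\le h(x_c,t)-h(x,t)\le \tfrac{\kappa_t}{2}(x-x_c)^2$, which is already dominated by the asserted right-hand side because $d_t\ge 0$. So on this side the cubic correction is not even needed.

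The side $x\le x_c$ is where the cubic term is genuinely required, and I expect it to be the main obstacle. Here I would start from the third-order identity $h(x,t)=h(x_c,t)-\tfrac{\kappa_t}{2}(x-x_c)^2+\tfrac12\int_{x_c}^{x}(x-z)^2\,\partial_z^3 h(z,t)\,dz$; since $\partial_z^3 h>0$ and the integration runs from $x_c$ down to $x$, the remainder is $\le 0$, so $h(x_c,t)-h(x,t)=\tfrac{\kappa_t}{2}(x-x_c)^2+\tfrac12\int_x^{x_c}(x-z)^2\,\partial_z^3 h(z,t)\,dz$, and it remains to dominate the last integral by $d_t(x_c-x)^3$, i.e.\ to control $\partial_z^3 h(z,t)$ for $z\in[x,x_c]$ by $6d_t=\partial_x^3 h(x_c,t)$. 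The difficulty is exactly that $\partial_x^4 h(\cdot,t)<0$ (Lemma~\ref{Lem:x0t0}) makes $\partial_x^3 h(\cdot,t)$ \emph{decreasing}, so its supremum over $[x,x_c]$ is attained at the left endpoint $x$, not at $x_c$; a naive bound by $\partial_x^3 h(x,t)$ only gives the coefficient $\tfrac16\partial_x^3 h(x,t)\ge d_t$. The way to push it through is to use the explicit representation $\partial_z^3 h(z,t)=\tfrac{t}{\alpha}L_3(z)\phi_2(z,t)$ from Lemma~\ref{Lem:x0t0}, together with $\phi_2(z,t)\to 1$ and the monotone asymptotics $L_3(z)\sim 2L_1(z)/z^2$ of \eqref{Eq:Li}, so that on the range of $x$ in which the lemma is actually invoked — a neighbourhood of $x_c$ of the size governing the Gaussian approximation in the sequel, where $x_c-x$ is small relative to $x_c$ — the ratio $\partial_z^3 h(z,t)/\partial_x^3 h(x_c,t)$ is $1+o(1)$ uniformly, giving $\tfrac12\int_x^{x_c}(x-z)^2\,\partial_z^3 h(z,t)\,dz\le d_t(x_c-x)^3$. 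Combining this with the $x\ge x_c$ estimate yields the stated inequality.
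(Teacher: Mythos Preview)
The paper's proof is extremely terse: it simply asserts, citing the sign pattern \eqref{Eq:wcon}, the two Taylor--Lagrange bounds
\[
-\tfrac{\kappa_t}{2}(x-x_c)^2+d_t(x-x_c)^3\ \le\ h(x,t)-h(x_c,t)\ \le\ -\tfrac{\kappa_t}{2}(x-x_c)^2
\]
for $x\le x_c$, and the reversed pair for $x\ge x_c$, and says this yields the lemma. The quadratic bound on each side is third-order Lagrange (using $\partial_x^3h>0$), and the cubic-corrected bound is fourth-order Lagrange (using $\partial_x^4h<0$). Your treatment of $x\ge x_c$ is exactly this, only written with the integral remainder; nothing differs there.

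For $x\le x_c$ you have put your finger on a genuine issue. The fourth-order Lagrange remainder with $\partial_x^4h<0$ gives
\[
h(x,t)-h(x_c,t)\ \le\ -\tfrac{\kappa_t}{2}(x-x_c)^2+d_t(x-x_c)^3,
\]
i.e.\ the \emph{opposite} inequality to the one the paper writes down. In fact the sign conditions \eqref{Eq:wcon} alone do \emph{not} imply the stated bound on the left half: take $h(x)=-\tfrac12x^2+\tfrac16x^3-\tfrac1{24}x^4$ on $(-\infty,1)$; then $h''<0$, $h'''>0$, $h^{(4)}<0$, $x_c=0$, $\kappa=1$, $d=\tfrac16$, yet at $x=-1$ one has $|h(x)-h(x_c)|=\tfrac{17}{24}>\tfrac{16}{24}=\tfrac{\kappa}{2}x^2+d|x|^3$. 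So the paper's one-line justification for the left half is, as written, not valid from \eqref{Eq:wcon} alone.

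Your proposed repair --- invoking the explicit formula $\partial_z^3h(z,t)=\tfrac{t}{\alpha}L_3(z)\phi_2(z,t)$ and the asymptotics of $L_3$ and $\phi_2$ to get $\partial_z^3h(z,t)/\partial_x^3h(x_c,t)=1+o(1)$ --- works only when $|x-x_c|=o(x_c)$, so it does not establish the lemma on the full range $[x_0,t]$ either. That said, it \emph{is} adequate for the only place the lemma is used, the proof of Lemma~\ref{Lem:deter}, where one always has $|x-x_c|\le \kappa_t^{-(1\pm\epsilon)/2}=o(x_c)$. So your argument salvages the application even though neither your proposal nor the paper's printed proof delivers the inequality in the generality stated.
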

\begin{proof}
By~\eqref{Eq:wcon}, we have
\begin{align*}
-\frac{\kappa_t}{2} (x-x_c)^2 + d_t (x-x_c)^3 \le 
h(x,t) -h(x_c,t) \le -\frac{\kappa_t}{2} (x-x_c)^2  
\end{align*}
for $x_0\le x \le x_c$ and
\begin{align}
-\frac{\kappa_t}{2} (x-x_c)^2 + d_t (x-x_c)^3 \ge 
h(x,t) -h(x_c,t) \ge -\frac{\kappa_t}{2} (x-x_c)^2  
\label{Eq:xlxc}
\end{align}
for $x \ge x_c$,
and, therefore, we get the desired result.
\end{proof}
\begin{lemma}
\label{Lem:deter}
We define
\begin{align}
\nonumber
\wX(t) &:= \sum_{k=x_0}^t \mkd(t),\quad
\Phi(f,t) := \frac{1}{\wX(t)}\sum_{k=x_0}^{t} \mkd(t) \Theta(f-u_n(k)),\\
\wS_t &:= \frac{1}{\wX(t)}\sum_{k=x_0}^{t} u_n(k) \mkd(t) ,\quad
\ds_t := \left (\frac{1}{\wX(t)}\sum_{k=x_0}^{t} (u_n(k)-\wS_t)^2 \mkd(t)
\right )^{1/2},
\label{Eq:manyf}
\end{align}
where we only consider $t > t_0$.
Then, 
\begin{align*}
\wS_t \sim v_n(t),\quad \ds_t \sim \s_n(t),\quad
\lim_{t\rightarrow\infty} \Phi(v_n(t) +  \s_n(t) y,t)=
\lim_{t\rightarrow\infty} \Phi(\wS_t +  \ds_ty,t)=\Upsilon(y),
\end{align*}
where
\begin{align}
\label{Eq:vnt}
v_n(t) &:=
\alpha^{-\delta_{n,1}/\alpha} 
\left ( \lo{n-1}t \right )^{1/\alpha}
L \left (\left ( \lo{n-1}t \right )^{1/\alpha} \right )
,\\
\s_n(t) &:= \frac{v_n(t)}{\sqrt{\alpha t}} \left ( \prod_{k=1}^{n-1} \lo{k}{t}
\right )^{-1/2}. 
\label{Eq:snt}
\end{align}
\end{lemma}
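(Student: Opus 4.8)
The plan is to treat, for each fixed large $t$, the family sizes $(\mkd(t))_{k=x_0}^{t}=(e^{h(k,t)})_{k=x_0}^{t}$ as a discrete Gaussian profile in the variable $k$, centred at the maximiser $x_c=x_c(t)$ of $h(\cdot,t)$, and then push this profile forward through the smooth, slowly varying map $k\mapsto u_n(k)$. First I would record that, by Lemma~\ref{Lem:x0t0}, $h(\cdot,t)$ is strictly concave on $[x_0-1,t+1]$ for all $t\ge t_0$ (one checks that $\phi_1(x,t)$, which controls the sign of $\partial_x^2 h$, stays positive slightly past $x=t$), so that $e^{h(\cdot,t)}$ is positive, continuous, strictly increasing on $[x_0-1,x_c]$ and strictly decreasing on $[x_c,t+1]$, with $x_c\in(x_0,t)$ for $t$ large by Lemma~\ref{Lem:xc_at}; the same unimodality holds for $k\mapsto u_n(k)\,e^{h(k,t)}$ and $k\mapsto u_n(k)^2 e^{h(k,t)}$, since $\tfrac{d}{dk}\log u_n(k)=L_1(k)\big(\tfrac1\alpha+\Omega_1(k)\big)$ is positive, bounded and eventually strictly decreasing and so only shifts the single sign change of $h'(\cdot,t)$. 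Lemma~\ref{Lem:rudiment} then replaces each of the sums defining $\wX(t)$, $\wS_t$, $\ds_t$ and the partial sums $\sum_{k=x_0}^{\lfloor z\rfloor}\mkd(t)$ by an integral, at the price of an additive error $O(e^{h(x_c,t)})$.

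Next I would carry out a Laplace estimate. By Lemma~\ref{Lem:hhx}, $\big|h(x,t)-h(x_c,t)+\tfrac{\kappa_t}{2}(x-x_c)^2\big|\le d_t|x-x_c|^3$, and $d_t\kappa_t^{-3/2}\to 0$ follows from \eqref{Eq:xc} and \eqref{Eq:atdt}, while the one-sided bound $h(x,t)-h(x_c,t)\le-\tfrac{\kappa_t}{2}(x-x_c)^2$ makes the region $\{|x-x_c|\gg\kappa_t^{-1/2}\}$ negligible; together these yield
\begin{equation*}
\int_{x_0}^{z}e^{h(y,t)}\,dy=e^{h(x_c,t)}\sqrt{\tfrac{2\pi}{\kappa_t}}\,\Big(\Upsilon\big(\sqrt{\kappa_t}\,(z-x_c)\big)+o(1)\Big),\qquad \int_{x_0}^{t}e^{h(y,t)}\,dy\sim e^{h(x_c,t)}\sqrt{\tfrac{2\pi}{\kappa_t}},
\end{equation*}
the first uniformly for $\sqrt{\kappa_t}\,(z-x_c)$ in a compact set. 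Since $e^{h(x_c,t)}=o(\wX(t))$, the sum-to-integral errors are negligible and $\wX(t)\sim e^{h(x_c,t)}\sqrt{2\pi/\kappa_t}$. Taylor-expanding $u_n$ about $x_c$ in the moment sums, the linear term integrates to zero up to an asymmetry of size $O\big(u_n'(x_c)\,d_t\kappa_t^{-2}\big)$ and the quadratic term contributes $\tfrac12 u_n''(x_c)\kappa_t^{-1}$; by \eqref{Eq:Li}, \eqref{Eq:xc} and \eqref{Eq:atdt} both are $o(\s_n(t))$, so I obtain $\wS_t=u_n(x_c(t))+o(\s_n(t))$ and $\ds_t=u_n'(x_c(t))\,\kappa_t^{-1/2}\,(1+o(1))$.

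The remaining work is to identify the constants and assemble. From $u_n'(x)=u_n(x)L_1(x)\big(\tfrac1\alpha+\Omega_1(x)\big)$, the estimate $L_1(x_c)\sim\logn t/t$ (from \eqref{Eq:xc}) and $\kappa_t\sim\logn t/(\alpha x_c)$ (from \eqref{Eq:atdt}), comparison with \eqref{Eq:snt} gives $\ds_t\sim\s_n(t)$; and solving the stationarity relation $\tfrac1\alpha\logn{x_c}+\log\omega_1(x_c)=(t-x_c)L_1(x_c)\big(\tfrac1\alpha+\Omega_1(x_c)\big)$ for $\lo{n-1}{x_c}$ in terms of $t$, then substituting into $u_n(x_c)=(\lo{n-1}{x_c})^{1/\alpha}\omega_W(\lo{n-1}{x_c})$ and simplifying via {\bf (A2)} and {\bf (A4)}, gives $u_n(x_c(t))=v_n(t)+o(\s_n(t))$, hence $\wS_t=v_n(t)+o(\ds_t)$. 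Finally, since $u_n$ is eventually strictly increasing, $\Phi(f,t)=\wX(t)^{-1}\sum_{k=x_0}^{\lfloor u_n^{-1}(f)\rfloor}e^{h(k,t)}\to\Upsilon\big(\sqrt{\kappa_t}\,(u_n^{-1}(f)-x_c)\big)$ for $f$ in the relevant range, by the partial-integral estimate. Taking $f=v_n(t)+\s_n(t)y$ and inverting $u_n$ to first order, $u_n^{-1}(f)=x_c+\s_n(t)y/u_n'(x_c)+o(\kappa_t^{-1/2})$ with $u_n^{-1}(v_n(t))=x_c+o(\kappa_t^{-1/2})$ and $\s_n(t)/u_n'(x_c)=\kappa_t^{-1/2}(1+o(1))$, so $\sqrt{\kappa_t}\,(u_n^{-1}(f)-x_c)\to y$ and $\Phi(v_n(t)+\s_n(t)y,t)\to\Upsilon(y)$, locally uniformly in $y$; the identical computation with $(v_n,\s_n)$ replaced by $(\wS_t,\ds_t)$ gives $\Phi(\wS_t+\ds_t y,t)\to\Upsilon(y)$.

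I expect the main obstacle to be this last matching step: one must pin the Laplace centre of mass $u_n(x_c(t))$ to the explicit expression $v_n(t)$ in \eqref{Eq:vnt} \emph{to precision finer than the wave width $\s_n(t)$}, not merely up to asymptotic equivalence, since a residual drift of order $\s_n(t)$ would replace $\Upsilon(y)$ by $\Upsilon(y+c)$. This comes down to inverting the transcendental equation defining $x_c(t)$ and propagating the resulting small errors through the slowly varying factors $L$ and $\omega_W$, where assumptions {\bf (A2)} and {\bf (A4)} — the stability of $L$ under slowly varying perturbations and the vanishing of the iterated logarithmic derivatives $\Omega_j$ — are precisely what force the error terms below the $\s_n(t)$ scale. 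A secondary nuisance is keeping the sum-to-integral and Laplace errors uniform over the relevant range of $y$, but local uniformity in $y$ is all that is needed for convergence of distribution functions.
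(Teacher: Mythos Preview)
Your approach is essentially the paper's: replace sums by integrals via Lemma~\ref{Lem:rudiment}, do a Laplace expansion of $e^{h(\cdot,t)}$ around $x_c$, Taylor-expand $u_n$ there to read off $\wS_t$ and $\ds_t$, and then invert $u_n$ to convert the Gaussian in $k$ into a Gaussian in fitness. The outline and the intermediate asymptotics ($\wX(t)\sim e^{h(x_c,t)}\sqrt{2\pi/\kappa_t}$, $\ds_t\sim u_n'(x_c)\kappa_t^{-1/2}\sim\s_n(t)$) match the paper.

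One technical slip: the ``one-sided bound'' $h(x,t)-h(x_c,t)\le -\tfrac{\kappa_t}{2}(x-x_c)^2$ is \emph{false} for $x>x_c$. Because $\partial_x^3 h>0$, Taylor to third order gives the \emph{opposite} inequality on the right of $x_c$ (see \eqref{Eq:xlxc}), so the quadratic does not dominate the right tail. The paper handles both tails with the concavity-based tangent bound $h(x,t)\le h(x_\pm,t)+\xi_\pm(x-x_\pm)$ at $x_\pm=x_c\pm\kappa_t^{-(1+\epsilon)/2}$, which gives genuine exponential decay on both sides (the estimates $I_1,I_2$ in \eqref{Eq:I12}). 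You need this or an equivalent device; the fix is local.

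Your closing paragraph is on target. The paper proves \eqref{Eq:Psilim}, namely $\Phi\big(u_n(x_c+y/\sqrt{\kappa_t}),t\big)\to\Upsilon(y)$, and then carries out the matching only for the pair $(\wS_t,\ds_t)$: it shows $\rho_2=(\wS_t-u_n(x_c))/\sigma_t'\to 0$ and $\rho_4=(\ds_t/\sigma_t')^2\to 1$ with the quantitative bound \eqref{Eq:r24}, and compares $z=x_c+y/\sqrt{\kappa_t}$ with $z'=u_n^{-1}(\wS_t+\ds_t y)$. The corresponding step for the explicit pair $(v_n(t),\s_n(t))$ --- i.e.\ that $u_n^{-1}(v_n(t))-x_c=o(\kappa_t^{-1/2})$ --- is not written out separately in the paper; your identification of this as the delicate point, requiring control of the slowly varying factors through {\bf (A2)} and {\bf (A4)} beyond mere asymptotic equivalence, is exactly the extra work that has to be done to upgrade $u_n(x_c)\sim v_n(t)$ to the precision $o(\s_n(t))$.
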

\begin{proof}
First note that \eqref{Eq:atdt} gives, for any $0<\epsilon<1$,
$$
\lim_{t\rightarrow\infty} \frac{2d_t}{\kappa_t} \kappa_t^{-(1-\epsilon)/2} = 0,
$$

If $\vert x-x_c\vert  \le \kappa_t^{-(1-\epsilon)/2}$ 
in Lemma~\ref{Lem:hhx} for some $0<\epsilon<1$
and $t$ is sufficiently large that $ 2 d_t \kappa_t^{-(1-\epsilon)/2}/\kappa_t \le 1$, then
$
\left \vert h (x,t) - h(x_c,t) \right \vert  \le \kappa_t^{\epsilon} ,
$
which approaches zero as $t$ goes to infinity; see \eqref{Eq:atdt}. Therefore, 
$h(m,t) \sim h(x_c,t)$, as $t\to\infty$
for $\vert m - x_c \vert \le \kappa_t^{-(1-\epsilon)/2}$, which gives
$$
\lim_{t\rightarrow\infty}  \kappa_t^{(1-\epsilon)/2} \frac{\wX(t)}{H(x_c,t)} = \infty.
$$
for any $\epsilon>0$.
In other words, for any $\epsilon>0$ there is $t_1$ such that
$\wX(t) \ge H(x_c,t) \kappa_t^{-(1-\epsilon)/2}$ for all $t \ge t_1$,
which, along with Lemma~\ref{Lem:rudiment}, gives
\begin{align}
\lim_{t\rightarrow\infty} \left \vert \Phi(u_n(z),t) - \frac1{\wX(t)}
\int_{x_0}^z H(y,t)dy\right \vert = 0,
\label{Eq:psilim}
\end{align}
where $z$ should be regarded as a certain monotonically increasing function of $t$ with $x_0 < z \le t$.%
\medskip%

Now consider the other case.
Fix $0<\epsilon<1$ and define
$x_\pm := x_c \pm  \kappa_t^{-(1+ \epsilon)/2}$ and also
$z_\pm := x_c \pm 2  \kappa_t^{-(1+ \epsilon)/2}$.
By \eqref{Eq:wcon}, we always have
$
h(x,t) 
\le h(x_\pm,t) + \xi_\pm (x-x_\pm)
$,
where
\begin{align*}
\xi_\pm = \left . \frac{\partial h}{\partial x}\right \vert_{x=x_\pm}
=- \frac1\alpha \logn{x_\pm} - \log \omega_1\left (x_\pm\right )  + (t-x_\pm) 
L_1(x_\pm) \left ( \frac1\alpha + \Omega_1(x_\pm) \right ).
\end{align*}
Since $\lim_{t\rightarrow\infty} \kappa_t^{-(1+\epsilon)/2}/x_c = 0$, Taylor's theorem gives
$$
\xi_\pm  \sim \pm \left (\left . \frac{\partial^2h}{\partial x^2}\right \vert_{x=x_c} \right )  \kappa_t^{-(1+\epsilon)/2} = \mp \kappa_t^{(1-\epsilon)/2},\quad
\xi_\pm (z_\pm - x_\pm )  \sim - \kappa_t^{-\epsilon}.
$$
Now consider
\begin{align}
\nonumber
I_1(t):=&\int_{x_0}^{z_-} \frac{H(y,t)}{H(x_c,t)}dy\le \int_{x_0}^{z_-} e^{h(y,t)-h(x_-,t)} dy\\
\nonumber
&\le \int_{-\infty}^{z_-} e^{\xi_- (y-x_-)} d y
\sim \kappa_t^{-(1-\epsilon)/2} \exp \left (- \kappa_t^{-\epsilon} \right ),\\
\nonumber
I_2(t):=&\int_{z_+}^t \frac{H(y,t)}{H(x_c,t)}dy
\le \int_{z_+}^{t} e^{h(y,t)-h(x_+,t)} dy\\
&\le \int_{z_+}^{\infty} e^{\xi_+ (y-x_+)} d y
\sim \kappa_t^{-(1-\epsilon)/2} \exp\left (- \kappa_t^{-\epsilon} \right ),
\label{Eq:I12}
\end{align}
where we have used $H(x_\pm,t) \le H(x_c,t)$.
Since $\wX(t) \ge H(x_c,t)$ for all sufficiently large $t$ and
$
\lim\limits_{t\rightarrow\infty}I_1(t) = 
\lim\limits_{t\rightarrow\infty}I_2(t) = 0,
$
\eqref{Eq:psilim} yields, for any $\epsilon>0$,
$$
\lim_{t\rightarrow\infty} \Phi(u_n(z),t) = 
\begin{cases} 
0,& z \le x_c - \kappa_t^{-(1+\epsilon)/2},\\
1, & z \ge x_c + \kappa_t^{-(1+\epsilon)/2}.
\end{cases}
$$
Hence, it is enough to consider $\Phi(u_n(z),t)$ for 
$\vert x_c - z \vert \le \kappa_t^{-(1+\epsilon)/2}$ for a certain positive $\epsilon$.\medskip

Abbreviate $z:=x_c + y/\sqrt{\kappa_t}$ and assume $\vert y\vert  \le \kappa_t^{-1/8}$ 
(in a sense, we have set $\epsilon =1/4$).
By Taylor's theorem, there is
$y_0$ such that
$\vert y_0 \vert\le \vert y \vert$
and
\begin{align*}
h\left (z,t\right )= h(x_c,t) - \frac{1}2 y^2 + R_t\left (x_c+\frac{y_0}{\sqrt{\kappa_t}} \right ) y^3,\quad
R_t(x):= \frac{t}{6\alpha}L_3(x) \phi_2(x,t).
\end{align*}
Defining 
$$
\epsilon_1(t) = \exp\left (\sup\left \{ \left \vert R_t
\left (x_c + \frac{y_0}{\sqrt{\kappa_t}}\right ) y^3 \right \vert : 
\vert y\vert \le \kappa_t^{-1/8} \right \}\right ) -1,
$$
we have
\begin{align}
\label{Eq:Happrox}
\frac{H(z,t)}{H(x_c,t)} \simeq_{\epsilon_1(t)} \exp \left ( -\frac{y^2}{2} \right ),
\end{align}
where $A\simeq_{\epsilon} B$ is a shorthand notation for $(1-\epsilon)B
\le A \le (1+\epsilon)B$.
Then,
\begin{align}
\label{Eq:Xdt}
\int_{x_c - a^{-5/8}_t}^z \frac{H(x,t)}{H(x_c,t)} dx\simeq_{\epsilon_1(t)} \kappa_t^{-1/2}\int_{-\kappa_t^{-1/8}}^y \exp \left ( -\frac{x^2}{2} \right )dx,
\end{align}
where $\kappa_t^{-1/2}$ is the Jacobian of the change of variables.
Since $R_t \sim d_t \sim \kappa_t/(3 x_c)$ and, accordingly, $\lim_{t\rightarrow\infty} \epsilon_1(t) = 0$,
we have
\begin{align}
\label{Eq:XHt}
\lim_{t\rightarrow\infty} \frac{\wX(t)\sqrt{\kappa_t}}{ H(x_c,t)} = 
\lim_{t\rightarrow\infty} \int_{-\kappa_t^{-1/8}}^{\kappa_t^{-1/8}} e^{-x^2/2}dx=
\int_{-\infty}^\infty e^{-x^2/2} dx = \sqrt{2\pi},
\end{align}
which, together with \eqref{Eq:psilim}, gives
\begin{align}
\label{Eq:Psilim}
\lim_{t\rightarrow\infty} \Phi(u_n(x_c + y /\sqrt{\kappa_t}),t)
= \Upsilon(y).
\end{align}
To complete the proof, we have to show
$$
\lim_{t\rightarrow\infty} \Phi\left (u_n\left (x_c + \frac{y}{\sqrt{\kappa_t}}\right ),t\right )=
\lim_{t\rightarrow\infty} \Phi(\wS_t + \ds_t y,t),
$$
for $\vert y\vert  \le \kappa_t^{-1/8}$.
Let $S_t':=u_n(x_c)$ and let $y_c$ be a function of $t$ implicitly defined as the solution of the equation
$$
\left . \frac{\partial h_2(x,t)}{\partial x} \right \vert_{x=y_c}
= L_1(y_c)\left (\nu + \Omega_2^{(1)}(y_c)\right )
+ \left . \frac{\partial h(x,t)}{\partial x} \right \vert_{x=y_c},
$$
where $h_2(x,t) := \log u_n(x) + h(x,t) = \log (u_n(x) H(x,t) )$.
Notice that $u_n(x_c) \sim v_n(t)$.
Obviously, $y_c \sim x_c$.  Define
$$
\rho_1(t):=\frac{\wS_t}{S_t'}
= \frac{1}{\wX(t) S_t'} \sum_{k=x_0}^t u_n(k) \mkd(t)
= \frac{1}{\wX(t) S_t'} \sum_{k=x_0}^t e^{h_2(k,t)}.
$$
Since $h_2(x,t)$ for given $t$ satisfies the condition in Lemma~\ref{Lem:rudiment},
we have
$$
\left \vert \rho_1(t) - \frac{1}{\wX(t) S_t'} \int_{x_0}^t u_n(y) H(y,t) dy \right \vert 
\le  \frac{7H(y_c,t) u_n(y_c)}{\wX(t) S_t'}.
$$
Since $\lim_{t\rightarrow\infty}H(y_c,t) / \wX(t) =0$
and $u_n(y_c)/S_t' \sim 1$, we have
$$
\lim_{t\rightarrow\infty} \left \vert \rho_1(t) 
-\frac{1}{\wX(t) S_t'} \int_{x_0}^t u_n(y) H(y,t) dy \right \vert = 0.
$$

Let $z_\pm = x_c \pm \kappa_t^{-5/8}$.
Since
\begin{align*}
\int_{z_+}^t u_n(y)^m \frac{H(y,t)}{H(x_c,t)} dy 
\le 
u_n(t)^m \int_{z_+}^t \frac{H(y,t)}{H(x_c,t)} dy, \\
\int_{x_0}^{z_-} u_n(y)^m \frac{H(y,t)}{H(x_c,t)} dy 
\le u_n(t)^m
\int_{x_0}^{z_-} \frac{H(y,t)}{H(x_c,t)} dy,
\end{align*}
$I_1$ and $I_2$ in \eqref{Eq:I12} with $\epsilon=1/4$ yield
$$
\lim_{t\rightarrow\infty} \frac{1}{\wX(t) S_t'}\left \vert  \int_{x_0}^t u_n(y)^m H(y,t) dy 
- \int_{z_-}^{z_+} u_n(y)^m H(y,t) dy \right \vert = 0,
$$
for any positive integer $m$.
Using \eqref{Eq:Happrox}, we have
$$
\frac{1}{\wX(t) S_t'}\int_{z_-}^{z_+} u_n(z) H(z,t) dz
\simeq_{\epsilon_1(t)} \frac{H(x_c,t)\kappa_t^{-1/2}}{\wX(t)S_t'}
\int_{-\kappa_t^{-1/8}}^{\kappa_t^{-1/8}} u_n\left (x_c + \frac{y}{\sqrt{\kappa_t}}\right )
e^{-y^2/2} dy.
$$
Since $S_t' \sim u_n(x_c + y/\sqrt{\kappa_t})$, we have
$$
\lim_{t\rightarrow\infty}
\frac{1}{\wX(t) S_t'}\int_{z_-}^{z_+} u_n(y) H(y,t) dy = 1.
$$
Therefore $\rho_1(t) \sim 1$ or 
$\wS_t \sim u_n(x_c) \sim v_n(t)$,
as claimed.\medskip

Define
\begin{align*}
\sigma_t' &:= \kappa_t^{-1/2}\left . \frac{du_n}{dx}\right \vert_{x=x_c} 
=\frac{S_t'}{\sqrt{\kappa_t}}L_1(x_c)\left [ \frac{1}{\alpha} + \Omega_1(x_c) \right ],\\
\rho_2(t) &:= \frac{\wS_t - S_t'}{\sigma_t'}
= \frac{1}{\wX(t) } \sum_{k=x_0}^t \frac{u_n(k) - u_n(x_c)}{\sigma_t'} H(k,t),\\
\rho_3(t) &:= \frac{1}{\wX(t) }\int_{z_-}^{z_+}\frac{u_n(x) - u_n(x_c)}{\sigma_t'} H(x,t) dx.
\end{align*}
Note that $\sigma_t' \sim \s_n(t)$.
%
Assume $\vert y\vert  \le \kappa_t^{-1/8}$. By Taylor's theorem, there is $y_1$ with
$\vert y_1 \vert \le \vert y \vert$ such that
$$
\frac{u_n(x_c + y /\sqrt{\kappa_t}) - u_n(x_c)}{\sigma_t'} = y + \frac{\tilde R_t(x_c+y_1/\sqrt{\kappa_t} )}{\sigma_t'} y^2,
$$
where
$$
\tilde R_t(x):= \frac{1}{2 \kappa_t} \frac{d^2 u_n(x)}{dx^2}=\frac{u_n(x)}{2 \kappa_t}L_2(x)
\left (
\frac{L_1^2}{L_2}\left [ \left ( \frac{1}{\alpha} +\Omega_1(x) \right )^2 -\Omega_2(x) 
\right ]
-\frac{1}{\alpha} -\Omega_1(x) 
\right ) .
$$
Using
\begin{align}
\frac{\tilde R_t(x_c+y_1/\sqrt{\kappa_t})}{\sigma_t'}\sim\frac{\tilde R_t(x_c)}{\sigma_t'} \sim 
\frac{1}{2\sqrt{\kappa_t x_c^2}} \left ( \frac{\delta_{n,1}}{\alpha}-1 \right )
\label{Eq:Rts}
\end{align}
for $\vert y_1\vert  \le \kappa_t^{-1/8}$, $\int_{-x}^x y e^{-y^2/2} dy = 0$,
and \eqref{Eq:Happrox}, we have
$$
\vert \rho_3(t) \vert \simeq_{\epsilon(t)}
\frac{\kappa_t^{-1/2} H(x_c,t)}{\wX(t)}
\frac{1}{2\sqrt{\kappa_t x_c^2}} \left \vert \frac{\delta_{n,1}}{\alpha}-1 \right \vert
\int_{-\kappa_t^{-1/8}}^{\kappa_t^{-1/8}}
y^2 e^{-y^2/2} dy,
$$
where $\lim_{t\rightarrow\infty}\epsilon(t)= 0$. Therefore,
\begin{align}
\label{Eq:rho3}
\lim_{t\rightarrow\infty} \rho_3(t) = 0.
\end{align}
Since
\begin{align*}
\left \vert \frac{1}{\wX(t) }\int_{x_0}^{z_-}\frac{u_n(x) - u_n(x_c)}{\sigma_t'} H(x,t) dx\right \vert
\le \frac{2u_n(t)}{\sigma_t'} 
\left \vert \frac{1}{\wX(t) }\int_{x_0}^{z_-} H(x,t) dx\right \vert,\\
\left \vert \frac{1}{\wX(t) }\int_{z_+}^{t}\frac{u_n(x) - u_n(x_c)}{\sigma_t'} H(x,t) dx\right \vert
\le \frac{2u_n(t)}{\sigma_t'} 
\left \vert \frac{1}{\wX(t) }\int_{z_+}^{t} H(x,t) dx\right \vert,
\end{align*}
\eqref{Eq:I12} together with \eqref{Eq:rho3} gives
\begin{align}
\label{Eq:rho2}
\lim_{t\rightarrow\infty}\rho_2(t)=
\lim_{t\rightarrow\infty}\frac{1}{\wX(t) }\int_{z_-}^{z_+}\frac{u_n(x) - u_n(x_c)}{\sigma_t'} H(x,t) dx=0.
\end{align}

Define
\begin{align*}
	\rho_4(t):=&\frac{{\ds_t}^2}{(\sigma_t')^2} = 
\frac{1}{\wX(t)(\sigma_t')^2} \sum_k \left (u_n(k) - S_t' - \sigma_t'\rho_2(t) \right )^2 H(k,t)
\\
=& \frac{1}{\wX(t)} \sum_k \left (\frac{u_n(k) - u_n(x_c)}{\sigma_t'} \right )^2 H(k,t) - \rho_2(t)^2,\\
\rho_5(t):=&\frac{1}{\wX(t)} \int_{z_-}^{z_+} \left (\frac{u_n(x) - u_n(x_c)}{\sigma_t'} \right )^2 H(x,t) dx\\
=&\frac{1}{\wX(t)} \int_{z_-}^{z_+} \kappa_t (x-x_c)^2 \left ( 1 + \frac{\tilde R_t(x_c+y_1/\sqrt{\kappa_t})}{\sigma_t'} \right )^2 H(x,t) dx.
\end{align*}
Using~\eqref{Eq:Happrox}, \eqref{Eq:XHt}, \eqref{Eq:Rts}, and
\eqref{Eq:rho2}, we have
$$
\lim_{t\rightarrow\infty} \rho_4(t) =
\lim_{t\rightarrow\infty} \rho_5(t) = \frac{1}{\sqrt{2\pi}} \int_{-\infty}^\infty y^2 e^{-y^2/2} dy
=1,
$$
where we have also used the same procedure to arrive at \eqref{Eq:rho2} 
using $(u_n(x) - u_n(x_c))^2 \le 4u_n(t)^2$.
From the above calculations, we conclude that there is a constant $C$ such that
\begin{align}
\vert \rho_2(t) \vert \le \frac{C}{\sqrt{\kappa_t} x_c},\quad
\vert \rho_4(t) -1\vert \le \frac{C}{\sqrt{\kappa_t} x_c},
\label{Eq:r24}
\end{align}
for all sufficiently large $t$.\medskip

Let $z :=x_c + y/\sqrt{\kappa_t}$ and $z' := u_n^{-1} ( \wS_t + \ds_t y  )$.
Recall that for any small but positive $\epsilon_2$ and $\epsilon_3$, 
$\wX(t) \ge \kappa_t^{-(1-\epsilon_2)/2} H(x_c,t)$ 
and $\kappa_t \le t^{-1+\epsilon_3}$ for all sufficiently large $t$. Since
\begin{align*}
\lim_{t\rightarrow\infty}\left \vert  \Phi(u_n(z),t) - \Phi(u_n(z'),t) \right \vert
&= 
\lim_{t\rightarrow\infty}\frac{1}{\wX(t)}
\left \vert \int_z^{z'} H(x,t) dx \right \vert
\le \lim_{t\rightarrow\infty} t^{-(1-\epsilon_0)/2}\vert z - z' \vert,
\end{align*}
for any $0<\epsilon_0<1$,
we need to show that there is $\epsilon_0$ such that
$\lim_{t\rightarrow\infty} t^{-(1-\epsilon_0)/2} \vert z - z' \vert =0$.
First observe that
$
\wS_t + \ds_t y 
= S_t' + \sigma_t' y'$ for
$y':=\rho_2(t) +  y \sqrt{\rho_4(t)}.
$
Assume $t$ is so large that $\vert y' \vert \le 2 \kappa_t^{-1/8}$.
By Taylor's theorem, there is
$y_1$ such that
$\vert y_1 \vert\le \vert y' \vert \le 2 \kappa_t^{-1/8}$ and
\begin{align*}
z' &= u_n^{-1}(S_t') + \frac{\sigma_t'}{u_n'(z_1)} y'\\
&= z+ \left ( \frac{\sigma_t'}{u_n'(z_1)} -\frac{1}{\sqrt{\kappa_t}}\right )
y+\frac{\sigma_t'}{u_n'(z_1)} \left [\rho_2(t) + y (\sqrt{\rho_4}-1 )\right ],
\end{align*}
where $z_1= u_n^{-1}(S_t' + \sigma_t' y_1)$. 
Using $z_1 \sim x_c$, $u_n(x_c) =\sigma_t'\sqrt{\kappa_t}$, \eqref{Eq:r24}, and
$\lim_{t\rightarrow\infty} t^{-\epsilon_4}/(\kappa_t x_c) =0$ for any $\epsilon_4>0$, we have
\smash{$
\vert z'-z\vert  \le \kappa_t^{-1/6} \le t^{1/4}
$}
for all sufficiently large $t$. Hence, if we choose $\epsilon_0=1/8$, 
we have the desired result. 
Since $\rho_4(t) \sim 1$, the proof is completed. 
\end{proof}
\begin{remark}
Since $u_n(t)/v_n(t) \sim (\log t)^{\delta_{n,1}/\alpha}$,
we have 
\begin{align*}
    \wS_t \sim W_t & \text{ $n \ge 2$, while }\\
    \lim_{t\rightarrow\infty} \wS_t/W_t = 0 & \text{ for $n=1$.}
\end{align*}
In other words, when $n \ge 2$, the mean fitness at generation $t$ is hardly discernible from the largest fitness at the same generation. Another interesting observation is that  if $n\ge 2$ or if $n=1$ and $\alpha>2$, then
$
\lim_{t\rightarrow\infty} \ds_t = 0,
$
which implies that the width of the traveling wave decreases to zero and
the \efd becomes a delta function 
in the sense that 
$$
\lim_{t\rightarrow\infty}\Phi(\wS_t + y ,t) =
\begin{cases}
0,& y<0,\\
1,& y>0.
\end{cases}
$$ 
This should be compared with the case of
$n=1$ and $\alpha<2$ in which the width of the traveling wave  increases with generation.
For $n=1$ and $\alpha=2$, the behaviour of $\ds_t$ depends on the slowly varying function $L$ entering the tail function in
\eqref{Eq:gasym}.
\end{remark}
\subsection{\label{Sec:sfm}Semi-deterministic \fm and its \efd}

\noindent
{\bf Definition of the \sfm.}
At each generation $k\ge 0$ a new mutant with fitness $$\theta_k:=(1-\beta) u_n(k)$$ appears and $(N_k(t) \colon t\ge k)$ are mutually independent Galton-Watson processes with Poisson-distributed offspring with mean $\theta_k$ for each $k$. In case $u_n(k)$ is ill-defined, we set $\theta_k=1$. 
By definition, $N_k(k)=1$ and $N_k(\tau) = 0$ for $\tau < k$
and no extinction is possible in the \sfm. 
Since we will use Lemma~\ref{Thm:efd} to prove Theorem~\ref{Thm:sfm} below, 
we limit the definition of the \sfm  to the case 
$n=1$ or the case $n=2$ and $\alpha<1$; see also Remark~\ref{Rem:um}.
\bigskip

We denote the total population size of the \sfm at generation $t$ by
$$
\Xs(t) := \sum_{k=0}^t N_k(t).
$$
The \efd $\Psi_s(f,t)$ of the \sfm and its mean fitness $S_t$ are defined as
\begin{align*}
\Psi_s(f,t) := \frac{1}{\Xs(t)}\sum_{k=0}^{t} N_k(t) \Theta(f-u_n(k)),\quad
\Ss_t := \frac{1}{\Xs(t)}\sum_{k=0}^{t} u_n(k)N_k(t).
\end{align*} 
Since $N_k(t)$ is the number of non-mutated descendants, we put $(1-\beta)$ in the definition of the fitness of a new mutant
in the \sfm.
In a sense, the \sfm is closer to the \fm than the \dfm 
due to fluctuations of $N_k(t)$.
We redefine $u_n(k) := \theta_k/(1-\beta)$ for convenience.
Now we prove that the \efd of the \sfm in the long time limit
becomes almost surely a Gaussian traveling wave just as the \dfm.

\begin{theorem}
\label{Thm:sfm}
For the \sfm with $n=1$ or with $n=2$ and $\alpha<1$,
almost surely 
\begin{align*}
\lim_{t\rightarrow\infty} \Psi_s(v_n(t) +  \s_n(t) y,t)= \Upsilon(y),\quad
\lim_{t\rightarrow\infty} \frac{\Ss_t}{v_n(t)}=1,
\end{align*}
where
\begin{align*}
v_n(t) &=
\alpha^{-\delta_{n,1}/\alpha} 
\left ( \lo{n-1}t \right )^{1/\alpha}
L \left (\left ( \lo{n-1}t \right )^{1/\alpha} \right )
,\\
\s_n(t) &= \frac{v_n(t)}{\sqrt{\alpha t}} \left ( \prod_{k=1}^{n-1} \lo{k}{t}
\right )^{-1/2} 
\end{align*}
have been introduced previously in \eqref{Eq:vnt} and \eqref{Eq:snt}.
\end{theorem}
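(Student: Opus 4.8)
The plan is to show that, on an almost sure event, the \sfm\ is at every generation only a small multiplicative perturbation of the \dfm\ of Section~\ref{Sec:deter}, and then to transfer the conclusion of Lemma~\ref{Lem:deter}. In the \sfm\ the mutant fitnesses are the deterministic sequence $W_k=u_n(k)$ and each $(N_k(t))_{t\ge k}$ is a Galton--Watson process with Poisson offspring of mean $\theta_k=(1-\beta)u_n(k)$; Lemma~\ref{Thm:efd} therefore applies (its proof only uses that the $N_k$ are independent Galton--Watson processes and that $W_k/u_n(k)\to1$, here trivially), with $J$ the sure event, so $\P(E)=1$ for
\[
E=\bigcup_{\tau\ge1}\;\bigcap_{k\ge\tau}\;\bigcap_{t\ge k}\Big\{\,\big|N_k(t)\,\theta_k^{-(t-k)}-1\big|\le 2\,\theta_k^{-(1-2\epsilon)/2}\,\Big\},
\]
where $\epsilon$ is fixed as in the statement. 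This is the only point where the restriction to $n=1$, or to $n=2$ with $\alpha<1$, is used: for $n=1$ the $\theta_k$ grow polynomially in $k$, for $n=2$ only like a power of $\log k$, and the summability of $\exp(-c\,\theta_k^{2\epsilon})$ underlying condition \eqref{Eq:tau0} then forces $\alpha<2\epsilon<1$. From now on work on $E$, fix the random $\tau$ it furnishes, and set $\delta_k:=2\theta_k^{-(1-2\epsilon)/2}$, so that $\delta_k\downarrow0$ and, for every $k\ge\tau$ and $t\ge k$, $N_k(t)=\mkd(t)(1+r_k(t))$ with $|r_k(t)|\le\delta_k$ and $\mkd(t)=\theta_k^{t-k}=H(k,t)$.

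\textbf{Population size and \efd.} Write $\Xs(t)=\sum_{k<\tau}N_k(t)+\sum_{k=\tau}^tN_k(t)$. For each fixed $k$, $(N_k(t)\theta_k^{-(t-k)})_{t\ge k}$ is a non-negative martingale, hence converges a.s.\ to a finite limit; since $\tau$ is finite and, for fixed $k$, $H(k,t)/\wX(t)\le H(k,t)/H(x_c,t)\to0$ by the strict concavity of $h(\cdot,t)$ (Lemma~\ref{Lem:x0t0}) together with the tail bounds $I_1,I_2$ from the proof of Lemma~\ref{Lem:deter} (recall $x_c=x_c(t)\to\infty$), the first sum is $o(\wX(t))$. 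In the second sum, $\sum_{k=\tau}^tN_k(t)=\sum_{k=\tau}^tH(k,t)+\sum_{k=\tau}^tH(k,t)r_k(t)$, and the error is at most $\sum_{k\ge x_0}H(k,t)\delta_k$ in absolute value; since the masses $H(k,t)/\wX(t)$ concentrate in a window about $x_c(t)\to\infty$ (quantitatively $\wX(t)\ge\kappa_t^{-(1-\epsilon)/2}H(x_c,t)$ and again $I_1,I_2$) while $\delta_k\downarrow0$, truncating at a fixed level $K$ — indices $k\le K$ carry weight $o(1)$ as $t\to\infty$, indices $k>K$ contribute at most $\delta_K$ — gives $\sum_kH(k,t)\delta_k=o(\wX(t))$, whence $\Xs(t)\sim\wX(t)$ a.s. The identical splitting of $\sum_{k:\,u_n(k)\le f}N_k(t)$ (a sum over an initial segment of indices, since $u_n$ is eventually increasing) yields, for $f=v_n(t)+\s_n(t)y$,
\[
\Psi_s(f,t)=\frac{1}{\Xs(t)}\sum_{k:\,u_n(k)\le f}N_k(t)=\frac{\wX(t)}{\Xs(t)}\,\Phi(f,t)+o(1)=\Phi(f,t)+o(1)\qquad\text{a.s.},
\]
using also that $u_n^{-1}(v_n(t)+\s_n(t)y)$ falls in the bulk window about $x_c(t)$ — precisely the identification at the end of the proof of Lemma~\ref{Lem:deter}. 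By Lemma~\ref{Lem:deter} the right-hand side tends to $\Upsilon(y)$, so $\Psi_s(v_n(t)+\s_n(t)y,t)\to\Upsilon(y)$ a.s., for each $y$ and, by monotonicity in $y$ and continuity of $\Upsilon$, simultaneously for all $y$.

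\textbf{Mean fitness.} For $\Ss_t/v_n(t)\to1$ one may repeat the previous paragraph with the weight $u_n(k)$ inserted: the early indices are negligible because $\wS_t\to\infty$, and $\sum_ku_n(k)H(k,t)\delta_k=o(\wX(t)\wS_t)$ because the weighted masses $u_n(k)H(k,t)$ also concentrate near $x_c(t)$ (the maximiser $y_c$ of $h_2$ in Lemma~\ref{Lem:deter} satisfies $y_c\sim x_c$), whence $\Ss_t\sim\wS_t\sim v_n(t)$. Alternatively, since $\s_n(t)/v_n(t)\to0$, it follows from the travelling-wave convergence just proved by the Markov-inequality argument of the Corollary following Theorem~\ref{Th:efd}.

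\textbf{Main obstacle.} The crux is the middle step: turning the uniform-in-$t$ but merely \emph{vanishing} relative error $\delta_k$ of each Galton--Watson family into a \emph{negligible} error for the full sums $\Xs(t)$ and $\sum_{u_n(k)\le f}N_k(t)$. This works only because the deterministic sums are governed by a saddle at $k=x_c(t)\to\infty$, where $\delta_k$ is already small; making it quantitative requires the concentration and exponential tail estimates from the proof of Lemma~\ref{Lem:deter}, together with the a.s.\ finiteness of the martingale limits of $N_k(t)\theta_k^{-(t-k)}$ to absorb the finitely many early generations. Everything else is bookkeeping.
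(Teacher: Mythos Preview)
Your proposal is correct and follows essentially the same route as the paper: apply Lemma~\ref{Thm:efd} (with $J$ the full space, since $W_k=u_n(k)$ deterministically) to get $\P(E)=1$, split the sums at the random index furnished by $E$, discard the finitely many early families as sub-super-exponential, and transfer the conclusion of Lemma~\ref{Lem:deter}. The paper's bookkeeping is slightly simpler: rather than carrying the explicit $\delta_k=2\theta_k^{-(1-2\epsilon)/2}$ and then arguing that $\sum_kH(k,t)\delta_k=o(\wX(t))$ via concentration at $x_c$, it fixes an arbitrary $\epsilon\in(0,\tfrac12)$, uses only that eventually $|N_k(t)/\theta_k^{t-k}-1|\le\epsilon$, obtains the sandwich $(1-\epsilon)/(1+2\epsilon)\,\Upsilon(y)\le\liminf\Psi_s\le\limsup\Psi_s\le(1+\epsilon)/(1-2\epsilon)\,\Upsilon(y)$, and lets $\epsilon\downarrow0$ at the end---so no saddle-point estimates from the proof of Lemma~\ref{Lem:deter} need to be reopened. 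Your martingale-convergence handling of the early indices is exactly what underlies the paper's one-line claim that $\Xs(t,\tau_1)$ and $\Xd(t,\tau_1)$ ``grow at most exponentially''.
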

\begin{proof}
We define $J$ and $E$ as in Lemma~\ref{Thm:efd}.
It is obvious that Lemma~\ref{Thm:efd} is applicable to the \sfm.
Note that by definition $J$ in \eqref{Eq:IJ} for the \sfm can be regarded as the sample space and, accordingly, $\P(E)=1$.
For any  $0<\epsilon<1/2$ and for any outcome $\omega \in E$, there exists $\tau_1$ such that
$(1-\epsilon)\theta_k^{t-k} \le N_k(t) \le ( 1 + \epsilon) \theta_k^{t-k}$
for all $t \ge k \ge \tau_1$.
Notice that $\tau_1$ can vary from outcome to outcome.
Let 
$$
\Xs(t,\tau_1) := \sum_{k=0}^{\tau_1} N_k(t),\quad
\Xd(t) := \sum_{k=0}^t \theta_k^{t-k},\quad
\Xd(t,\tau_1) := \sum_{k=0}^{\tau_1} \theta_k^{t-k}.
$$
Then, for $t \ge \tau_1$, we have
$$
(1-\epsilon) \left (\Xd(t) - \Xd(t,\tau_1) \right )+ \Xs(t,\tau_1)
\le
\Xs(t) 
\le
(1+\epsilon) \left (\Xd(t) - \Xd(t,\tau_1) \right )+ \Xs(t,\tau_1).
$$
Since $\Xd_s(t,\tau_1)$ and $\Xd(t,\tau_1)$ grow at most exponentially and
$\Xd(t)$ grows super-exponentially,
we have almost surely
$$
\liminf_{t\rightarrow\infty} \frac{\Xs(t)}{\Xd(t)} \ge 1-\epsilon,\quad
\limsup_{t\rightarrow\infty} \frac{\Xs(t)}{\Xd(t)} \le 1+\epsilon.
$$
Hence there is almost surely $\tau_2$ such that
$(1-2\epsilon) \Xd(t) \le \Xs(t) \le (1+2\epsilon) \Xd(t)$ for all $t \ge \tau_2$.\medskip

Now set $\tau = \max\{\tau_1,\tau_2\}$ and assume $t > \tau$.
Then, we have
\begin{align*}
\Psi_s(f,t) &\ge \frac{1}{1+2\epsilon} \frac{1}{\Xd(t)} \sum_{k=0}^{\tau_1} N_k(t)\Theta(f-u_n(k))
 +\frac{1}{1+2\epsilon} \frac{1}{\Xd(t)} \sum_{k=\tau_1+1}^{t} N_k(t)\Theta(f-u_n(k))\\
&\ge \frac{1}{1+2\epsilon} \frac{1}{\Xd(t)} \sum_{k=0}^{\tau_1} N_k(t)\Theta(f-u_n(k))
+\frac{1-\epsilon}{1+2\epsilon} \frac{1}{\Xd(t)} \sum_{k=\tau_1+1}^{t} \theta_k^{t-k}\Theta(f-u_n(k)).
\end{align*}
Hence by Lemma~\ref{Lem:deter}, we conclude
\begin{align*}
\liminf_{t\rightarrow \infty} \Psi_s(v_n(t) + \s_n(t) y,t) \ge
\frac{1-\epsilon}{1+2\epsilon} \Upsilon(y).
\end{align*}
By the same token, we have
\begin{align*}
\limsup_{t\rightarrow \infty} \Psi_s(v_n(t) + \s_n(t) y,t) \le
\frac{1+\epsilon}{1-2\epsilon} \Upsilon(y).
\end{align*}
Since $\epsilon$ is arbitrary, we proved the first part of the theorem.\medskip

Let 
$$
\wS_t:= \frac{1}{\Xd(t)}\sum_{k=0}^t u_n(k) \theta_k^{t-k}.
$$
By Lemma~\ref{Lem:deter}, we have $\wS_t \sim v_n(t)$.
Inspecting the above proof, we can conclude that
for any $0<\epsilon<1/2$ and for any outcome $\omega\in E$, there is $\tau$ such that
$$
(1-\epsilon)\wS_t \le \Ss_t \le (1+\epsilon) \wS_t,
$$
for all $t \ge \tau$.
Since $\epsilon$ is arbitrary,
the proof is completed.
\end{proof}
\subsection{\label{Sec:num}Numerical study for the \mm with $n=1$}
Since the largest fitness is expected to dominate the evolution of the
population even in the \mm and the limiting distribution is continuous
even in the \fm, 
we conjecture that Theorem~\ref{Th:efd} is valid even for the \mm; see Remark~\ref{Rem:mm}. 
For the \mm, however, we only present some numerical results, which
supports our conjecture.\medskip

\begin{figure}[t]
\centering
\includegraphics[width=0.7\linewidth]{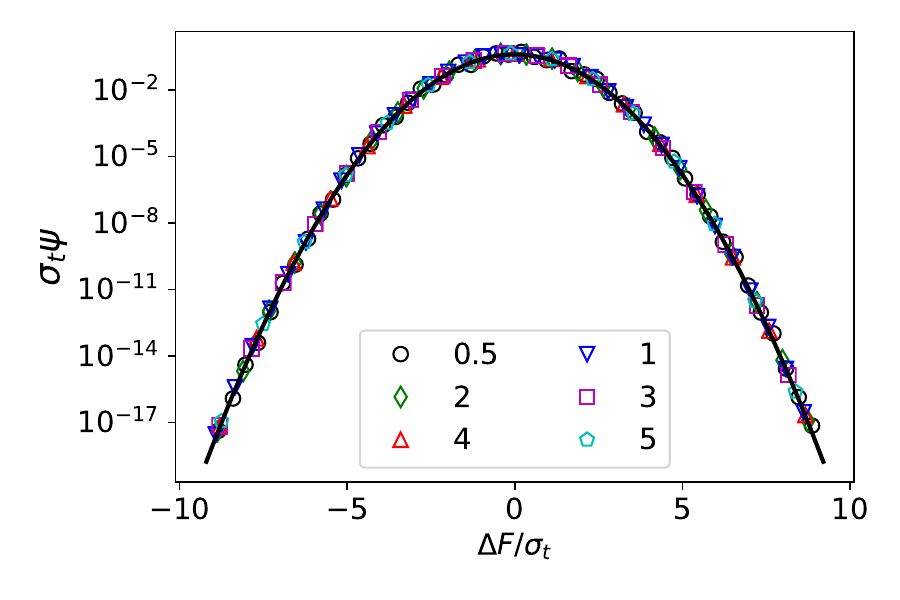}
\caption{\label{Fig:gumtra} Semilogarithmic plot of
$\sigma_t \psi(F,t)$ vs. $\Delta F/\sigma_t$
for various $\alpha$'s at $t=983~040$. For comparison, the normal distribution
is plotted by a solid curve.
}
\end{figure}
For numerical feasibility we assume that the fitness of a mutant can only be one of the 
discrete values
$f_i = (c i)^{1/\alpha}$ for $i\ge 1$, 
where $c$ is a constant to be determined later.
Defining $G_p(x) = \exp(-x^\alpha + c)$ for $x \ge c^{1/\alpha}$
and $G_p(x)=1$ for $x \le c^{1/\alpha}$, we assign
probabilities
\begin{align*}
p_i:=\P(F=f_i) = G_p(f_i) - G_p(f_{i+1})=e^{-c i} (e^c - 1).
\end{align*}
Since $G_p(f_{i+1}) \le G(x) \le G_p(f_i)$ for $f_i \le x < f_{i+1}$ and $\lim_{i\rightarrow\infty}
f_{i+1}/f_i = 1$, we have 
\begin{align*}
\lim_{x\rightarrow\infty} \frac{\log G(x)}{x^\alpha} = -1.
\end{align*}
Therefore, we can apply Theorem~\ref{Th:mainthm}, to predict 
$
W_t\sim \alpha^{-1/\alpha} (t \log t)^{1/\alpha},
$
almost surely on survival.\medskip

In this section, we denote the number of individuals with fitness 
$f_k$ at generation $t$ by $N_k(t)$. We would like to emphasize that 
$f_k$ should not be confused with $W_k$.
The total population size $X(t)$ and the mean fitness $S_t$ are calculated as
\begin{align}
X(t) = \sum_{k=1}^\infty N_k(t) ,\quad
S_t = \sum_{k=1}^\infty \frac{N_k(t)}{X(t)} f_k.
\end{align}
The standard deviation $\sigma_t$ is naturally defined.
Given $N_k(t)$ and $S_t$, the random variable
$N_k(t+1)$ is drawn from the Poisson distribution with mean $(1-\beta)N_k(t) f_k + \beta S_t X(t)
p_k$. Since the accurate value of $\beta$ is not important
as long as $0 < \beta < 1$, we choose $\beta = 10^{-20}$
to make $1-\beta$ indistinguishable from $1$ within machine accuracy
of double-precision floating-point format.\medskip

\pagebreak[3]

Since the total size of the population increases super-exponentially on survival
and we are mostly interested in long-time behaviour, 
we set $X(0)$ very large (in the actual implementation, we set
 $ N_1(0)=X(0) = 10^{100}$ and $S_0=f_1$),
which makes fluctuations of the total population size invisible within machine accuracy.
Besides, we set $c=20\log 10 \approx 46.05$, which gives
$p_{k+1}/p_k = 10^{-20}$. Therefore, we have only to consider
$k$ up to $\beta S_t X(t) p_k \ge 1$ with $p_k \approx e^{-c(k-1)}$.\medskip

Let $\psi_k(t):= N_k(t)/X(t)$.
Since parameters are chosen such that deviation from the expected 
value of $\psi_k(t+1)$ for given $\psi_k(t)$ cannot be generated
within machine accuracy, the actual stochastic simulations cannot be 
different from the deterministic equation 
\begin{align}
\psi_{k}(t+1) = {(1-\beta)} \psi_k(t) \frac{f_k}{S_t} + \beta \tilde p_k,\quad
{S_t} = \sum_k \psi_k(t) f_k,
\label{Eq:fre}
\end{align}
where $\tilde p_k = p_k$ if $\beta X(t+1)p_k >1$ and 0, otherwise.
In a sense, we are studying a deterministic version of the \mm, but,
as we mentioned already, even the full stochastic \mm 
is not distinguishable from the deterministic version \mm for the parameters
we chose.
Now, we present the numerical solution of \eqref{Eq:fre}.
\medskip

In Figure~\ref{Fig:gumtra}, we depict $\sigma_t \psi(F,t)$ vs $\Delta F/\sigma_t$,
where $\Delta F = F - S_t$ on a semi-logarithmic scale
at generation $t \approx 10^6$.
Here, $\psi(F,t)$ is a density that is calculated as
$$
\psi(F,t) = \frac{1}{f_{k+j}-f_{k-j}} \sum_{k-j\le i\le k+j} \psi_i(t)
$$
with a suitable bin size $2j$, where the integer $k$ is determined uniquely by
$f_{k} \le F < f_{k+1}$. We assure that dependency of $\psi(F,t)$ on the bin size
is negligible over a wide range of $j$ (details not shown here).
For comparison, the Gaussian function with zero mean and unit variance
is also drawn by a solid curve. Just  as we proved for the \fm,
the \efd is again well described by a Gaussian traveling wave.
\medskip

\begin{figure}[t]
\centering
\includegraphics[width=0.95\linewidth]{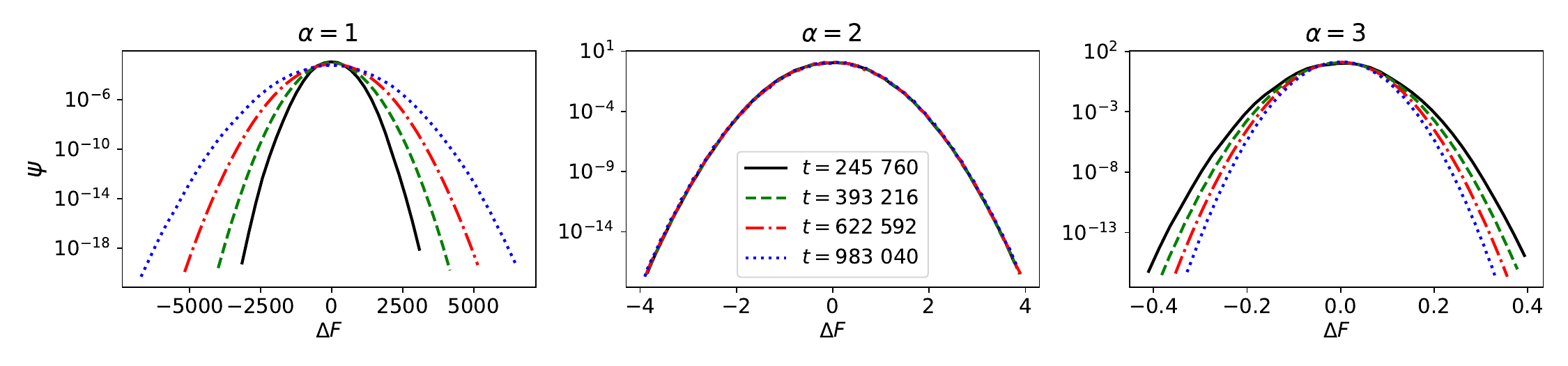}
\caption{\label{Fig:gumtradT} Plots of $\psi(F,t)$ vs. $\Delta F$ at different
generations for $\alpha=1$ (left), $\alpha=2$ (middle), and $\alpha=3$ (right) on a semi-logarithmic scale.
For $\alpha=3$ ($\alpha=1$), the width of the traveling wave decreases (increases).
For $\alpha=2$, the width of the traveling wave remains constant.
}
\end{figure}

We have found that depending on the actual form of the tail function,
$\sigma_t$ can increase, decrease,
or even remain constant in the \fm. To check if this property remains valid in \mm, we plotted the \efd at different times for different values of $\alpha$, whose result is summarized in Figure~\ref{Fig:gumtradT}.
The behaviour is the same as shown for the \fm. 
In fact, the predicted $S_t$ and $\sigma_t$ for the \fm conform to numerical results (details not shown here).
From the numerical observations, we conjecture that {the travelling-wave part of} the \mm with type I tail function 
(at least with $n=1$) has the same \efd as the \fm.

\section{\label{Sec:sum}Concluding remarks}

We provided strong analytical and numerical evidence for the emergence of a travelling wave for the branching process with selection and mutation for unbounded fitness distributions of Gumbel type. For type~I tail functions with tail index $n=1$, or in other words stretched exponential fitness distributions, we show that if the tail parameter satisfies $\alpha > 2$, the standard deviation of the traveling Gaussian
wave decreases and eventually the \efd becomes highly peaked like a delta function. 
{Traveling wave solutions of 
Gaussian form were found previously in a study of the deterministic (infinite population) limit of the model, which amounts to solving the recursion (\ref{Eq:fre}) with $\tilde p_k = p_k$, see~\cite{Park2008}. The expressions for the mean and variance of the EFD obtained in \cite{Park2008} for a particular type~I tail function match Eqs.~\eqref{Eq:vnt} and~\eqref{Eq:snt}, see also~\cite{Kingman1978}.}  
\medskip

{We conjecture a similar behaviour} for bounded fitness distributions of Gumbel type in the condensation case discussed in Section~1. In that case the Gaussian wave is expected to travel 
to the essential supremum of the fitness distribution, while its standard deviation goes to zero faster than the distance of its mean to the essential supremum. 
For bounded fitness distributions of Weibull type we conjecture, as in Ref.~\cite{Dereich2017} for a branching model in continuous time, that the condensate emerges in the shape of a Gamma distribution. The conjecture is justified by the rigorous analysis of {the} deterministic model in Ref.~\cite{Dereich2013}.
\medskip

In our model every individual has a Poisson number of offspring with mean given by its fitness. It is natural to conjecture that results like emergence of the travelling wave, doubly exponential growth rates or condensation also hold for other distributions with the same mean and not too large variance. Verifying this universality conjecture rigorously would be an interesting future project.
\medskip

\bibliographystyle{unsrt}
\bibliography{f}
\end{document}